\newtheorem{theorem}{Theorem}
\newtheorem{lemma}[theorem]{Lemma}
\newtheorem{corollary}[theorem]{Corollary}
\newtheorem{proposition}[theorem]{Proposition}
\newtheorem{conjecture}[theorem]{Conjecture}
\newtheorem{definition}[theorem]{Definition}
\newtheorem{question}[theorem]{Question}
\def\COMMENT#1{}
\def\TASK#1{}
\def\C{\mathcal{C}}
\def\S{\mathcal{S}}
\def\cR{\mathcal{R}}
\def\d{\delta}
\def\eps{\varepsilon}
\def\a{\alpha}
\def\b{\beta}
\def\g{\gamma}
\def\l23{\log_2\!{3}\,}
\def\Tu{\text{Tu}}
\numberwithin{theorem}{section}
\numberwithin{equation}{section}
\def\noproof{{\unskip\nobreak\hfill\penalty50\hskip2em\hbox{}\nobreak\hfill%
       $\square$\parfillskip=0pt\finalhyphendemerits=0\par}\goodbreak}
\def\endproof{\noproof\bigskip}
\title[On the structure of digraphs with forbidden tournaments or cycles]{On the structure of oriented graphs and digraphs with forbidden tournaments or cycles}
\author{Daniela K\"uhn, Deryk Osthus, Timothy Townsend, Yi Zhao}
\thanks{The research leading to these results was partially supported by the European Research Council
under the European Union's Seventh Framework Programme (FP/2007--2013) / ERC Grant
Agreements no. 258345 (D.~K\"uhn) and 306349 (D.~Osthus). Yi Zhao was partially supported by NSA grant H98230-12-1-0283 and NSF Grant DMS-1400073.
}
\begin{document}

\begin{abstract}
Motivated by his work on the classification of countable homogeneous oriented graphs, Cherlin asked about the typical structure of oriented graphs {\rm (i)} without a transitive triangle, or {\rm (ii)} without an oriented triangle. We give an answer to these questions (which is not quite the predicted one). Our approach is based on the recent `hypergraph containers' method, developed independently by Saxton and Thomason as well as by Balogh, Morris and Samotij. Moreover, our results generalise to forbidden transitive tournaments and forbidden oriented cycles of any order, and also apply to digraphs. Along the way we prove several stability results for extremal digraph problems, which we believe are of independent interest.
\end{abstract}

\date{\today}

\maketitle

\section{Introduction}\label{Section: Introduction}

\subsection{$H$-free graphs}

Given a fixed graph $H$, a graph is called $H$-\emph{free} if it does not contain $H$ as a (not necessarily induced) subgraph. In 1976 Erd\H{o}s, Kleitman and Rothschild~\cite{EKR} asymptotically determined the logarithm of the number of $K_k$-free graphs on $n$ vertices, for every $k\geq 3$. This was strengthened by Kolaitis, Pr\"omel and Rothschild~\cite{KPR}, who showed that almost all $K_k$-free graphs are $(k-1)$-partite, for every $k\geq 3$ (the case $k=3$ of this was already proved\COMMENT{Yes, I checked and this was indeed proved in the same paper as the result mentioned above} in~\cite{EKR}). This was one of the starting points for a vast body of work concerning the number and structure of $H$-free graphs on $n$ vertices (see, e.g.~\cite{BBS1, BBS2, BBS3, BMSW, BaSa, EFR, KPR, OPT, PrSt}). The strongest of these results essentially state that for a large class of graphs $\mathcal{H}$, and any $H\in \mathcal{H}$, almost all $H$-free graphs have a similar structure to that of the extremal $H$-free graph. More recently, some related results have been proved for hypergraphs (see, e.g.~\cite{BaMu, PeSc}).

However, the corresponding questions for digraphs and oriented graphs are almost all wide open, and are the subject of this paper. Until now the only results of the above type for oriented graphs were proved by Balogh, Bollob\'as and Morris~\cite{BBM1, BBM2} who classified the possible `growth speeds' of oriented graphs with a given property. Moreover Robinson~\cite{Rob1, Rob2}, and independently Stanley~\cite{Stan}, counted the number of acyclic digraphs. A related problem was considered by Alon and Yuster~\cite{AYoriented}, who determined $\max_G D(G, T)$ over all $n$-vertex graphs $G$ for sufficiently large $n$, where $T$ is a fixed tournament and $D(G, T)$ denotes the number of $T$-free orientations of $G$ (note that $\sum_G D(G, T)$ is the number of $T$-free oriented graphs on $n$ vertices).

\subsection{Oriented graphs and digraphs with forbidden tournaments or cycles}

A \emph{digraph} is a pair $(V,E)$ where $V$ is a set of vertices and $E$ is a set of ordered pairs of distinct vertices in $V$ (note that this means that in a digraph we do not allow loops or multiple edges in the same direction). An \emph{oriented graph} is a digraph with at most one edge between two vertices, so may be considered as an orientation of a simple undirected graph. A \emph{tournament} is an orientation of a complete graph. We denote a transitive tournament on $k$ vertices by $T_k$, and a directed cycle on $k$ vertices by $C_k$. We only consider labelled graphs and digraphs.

Given a class of graphs $\mathcal{A}$, we let $\mathcal{A}_n$ denote the set of all graphs in $\mathcal{A}$ that have precisely $n$ vertices, and we say that \emph{almost all graphs in} $\mathcal{A}$ \emph{have property} $\mathcal{B}$ if
$$\lim\limits_{ n\to \infty }\frac{|\{G\in \mathcal{A}_n: G \text{ has property } \mathcal{B}\}|}{|\mathcal{A}_n|} = 1.$$
Clearly any transitive tournament is $C_k$-free for any $k$, and any bipartite digraph is $T_3$-free. In 1998 Cherlin \cite{Cher} gave a classification of countable homogeneous oriented graphs. He remarked that `the striking work of~\cite{KPR} does not appear to go over to the directed case' and made the following conjectures.\footnote{Note that oriented graphs are referred to as digraphs in~\cite{Cher}.}
\begin{conjecture}[Cherlin]\label{Cherlin Conj}
\label{conj1}
~
\begin{enumerate}[{\rm (i)}]
\item Almost all $T_3$-free oriented graphs are tripartite.
\item Almost all $C_3$-free oriented graphs are acyclic, i.e. they are subgraphs of transitive tournaments.
\end{enumerate}
\end{conjecture}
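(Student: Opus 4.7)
The plan is to combine a stability result for the extremal problem with the hypergraph containers method, following the strategy that has worked for analogous undirected counting problems. For each forbidden pattern $H\in\{T_3,C_3\}$, I would set up the auxiliary $3$-uniform hypergraph $\mathcal{H}_n^H$ whose vertex set is the $n(n-1)$ possible oriented edges on $[n]$ and whose hyperedges are the edge sets of copies of $H$. The $H$-free oriented graphs on $[n]$ are precisely the independent sets of $\mathcal{H}_n^H$, and the goal is to show that almost all of them are tripartite (case (i)) or acyclic (case (ii)).

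The first step is an extremal/stability result for $H$-free oriented graphs. The extremal numbers are $\text{ex}(n,T_3)=(1+o(1))n^2/3$, attained by balanced $C_3$-blowups, and $\text{ex}(n,C_3)=\binom{n}{2}$, attained by every transitive tournament. The accompanying stability statements, which I would prove by a directed regularity/supersaturation argument, say that any $T_3$-free oriented graph with $(1-o(1))n^2/3$ edges is $o(n^2)$-close in edit distance to a $C_3$-blowup, and any $C_3$-free oriented graph with $(1-o(1))\binom{n}{2}$ edges is $o(n^2)$-close to an acyclic orientation. The idea is that otherwise some regular pair in the reduced digraph has a `mixed' orientation from which supersaturation yields many copies of $H$, contradicting $H$-freeness.

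Next I would apply the Saxton--Thomason / Balogh--Morris--Samotij container theorem to $\mathcal{H}_n^H$; the codegree conditions reduce to counting copies of $H$ through prescribed sets of oriented edges, which is routine for $3$-vertex patterns. This produces a family $\mathcal{F}$ of `containers' with $\log|\mathcal{F}|=o(n^2)$, each a near-$H$-free oriented graph on $[n]$ of size at most $\text{ex}(n,H)+o(n^2)$. Stability then attaches to each $F\in\mathcal{F}$ a target structure $S_F$ (a $C_3$-blowup or a transitive tournament) from which $F$ differs in $o(n^2)$ edges. Counting the $H$-free subgraphs of each $F$ that are \emph{not} themselves tripartite or acyclic and summing over $\mathcal{F}$ would show these `wrong-structure' graphs form a negligible fraction compared with the $\Theta(3^n\cdot 2^{n^2/3})$ tripartite $T_3$-free oriented graphs in case (i) and with the count of acyclic oriented graphs (computed by Robinson and Stanley) in case (ii).

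The main obstacle, I expect, is stability for the $C_3$-free problem. Unlike the $T_3$ setting, where near-extremal graphs are forced close to a single structure up to the choice of partition, for $C_3$ the family of extremal examples consists of all $n!$ transitive tournaments, so near-extremal graphs need not be close to any fixed tournament. One must show that any such graph inherits a near-linear order on its vertices, which I would attack by a bootstrapping step: locate a large transitive sub-tournament via an extension argument, force almost every remaining edge to respect its induced linear order (since any deviation produces many $C_3$'s by a short case analysis), and finally absorb the remaining vertices one by one. A secondary subtlety is that the container family must be refined enough for stability to apply uniformly while remaining small enough in cardinality to be dominated by the target count of acyclic oriented graphs.
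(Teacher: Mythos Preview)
Your proposal has a fundamental problem: part~(ii) of Cherlin's conjecture is \emph{false}, and the paper proves this. Specifically, the paper shows (Theorem~\ref{C_k free main theorem}(i)) that almost all $C_3$-free oriented graphs have at least $cn/\log n$ backwards edges in any vertex ordering, so they are not acyclic. Thus your proposed argument for~(ii) cannot be completed. The gap is exactly the ``secondary subtlety'' you flag at the end: the container family for $C_3$ has size $2^{n^{2-c}}$ or so, but the number of acyclic oriented graphs is only $n!\,2^{\binom{n}{2}}\cdot 2^{-\Theta(n)}$, which is \emph{smaller} than $2^{\binom{n}{2}}$ by a factor super-polynomial in $n$ but far below $2^{n^{2-c}}$. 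So even with perfect stability, summing $2^{e(F)}$ over containers $F$ does not beat the acyclic count; you would need to control the number of $C_3$-free oriented subgraphs of a near-transitive container to within a $2^{O(n)}$ factor, and the paper's lower-bound construction (Lemma~\ref{linearly many backwards}) shows this is impossible.

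For part~(i) your container strategy is right in spirit but your stability statement points at the wrong extremal object. The containers are sets of directed edges, i.e.\ \emph{digraphs}, and the quantity controlling the number of oriented subgraphs of a container $G$ is $3^{f_2(G)}2^{f_1(G)}=2^{e_{\log 3}(G)}$, not $2^{e(G)}$. Hence the relevant extremal problem is to maximise $e_{\log 3}$ over $T_3$-free digraphs, and the extremal configuration is the complete balanced \emph{bipartite} digraph $DT_2(n)$ (Lemma~\ref{lem:turan}), not the $C_3$-blowup. The $C_3$-blowup is extremal only among oriented graphs, and it yields $2^{n^2/3}$ subgraphs, which is strictly less than the $3^{n^2/4}$ bipartite oriented graphs --- so stability towards $C_3$-blowups would actually contradict the lower bound. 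With the correct (weighted) stability result (Lemma~\ref{lem:sta}), the container argument shows that almost all $T_3$-free oriented graphs are \emph{bipartite}, which is strictly stronger than Cherlin's tripartite conjecture. So~(i) is true, but not for the reason you give, and the typical structure is not close to the (tripartite) extremal oriented graph.
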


Our first main result not only verifies part {\rm (i)} of this conjecture, but shows for all $k\geq 2$ that almost all $T_{k+1}$-free oriented graphs are $k$-partite. Note that in particular this shows that in fact almost all $T_3$-free oriented graphs are actually even bipartite. We also prove the analogous result for digraphs.

\begin{theorem} \label{thm:main1}
Let $k\in \mathbb{N}$ with $k\geq 2$. Then the following hold.
\begin{enumerate}[{\rm (i)}]
\item Almost all $T_{k+1}$-free oriented graphs are $k$-partite.
\item Almost all $T_{k+1}$-free digraphs are $k$-partite.
\end{enumerate}
\end{theorem}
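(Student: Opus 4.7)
\noindent\emph{Proof proposal.}
The plan is to combine the hypergraph container method (Saxton--Thomason; Balogh--Morris--Samotij) with a stability theorem for the extremal $T_{k+1}$-free problem, in the spirit of the Balogh--Morris--Samotij treatment of triangle-free graphs. Write $F_n$ for the set of $T_{k+1}$-free oriented graphs (respectively digraphs) on $[n]$; the goal is to show that the non-$k$-partite members of $F_n$ form a $o(1)$-fraction.

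First I would establish an extremal result and its stability version. Every orientation of $K_{k+1}$ already contains a copy of $T_{k+1}$, so the underlying undirected graph of any $T_{k+1}$-free oriented graph is $K_{k+1}$-free, and Tur\'an's theorem yields $\mathrm{ex}(n, T_{k+1}) = (1-1/k)\binom{n}{2}$ for oriented graphs (attained by any orientation of $T(n,k)$) and $(1-1/k)n^2 - O(n)$ for digraphs. Applying the Erd\H{o}s--Simonovits stability theorem to the underlying graph then gives: for every $\delta > 0$ there is $\varepsilon > 0$ such that every $T_{k+1}$-free oriented graph with at least $(1-1/k-\varepsilon)\binom{n}{2}$ edges can be made $k$-partite by deleting at most $\delta n^2$ edges; the digraph case is analogous.

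Next, I would apply the container theorem to the auxiliary hypergraph $\mathcal{H}$ whose vertex set $V(\mathcal{H}) = \{(i,j) : i \neq j \in [n]\}$ indexes the possible directed edges on $[n]$ and whose edges are the edge-sets of copies of $T_{k+1}$ in the complete oriented graph (respectively digraph) on $[n]$. Independent sets of $\mathcal{H}$ are exactly the members of $F_n$, and the co-degree conditions needed to apply the container theorem are routine. Combined with a supersaturation step to control container sizes, the theorem yields a family $\C$ of at most $2^{o(n^2)}$ containers, each of size at most $\mathrm{ex}(n, T_{k+1}) + o(n^2)$, such that every $G \in F_n$ is a subgraph of some $C \in \C$. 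By the stability result, each such $C$ admits a near $k$-partition $V_1^C \cup \cdots \cup V_k^C$ of $[n]$ with at most $\delta n^2$ intra-part edges.

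Finally, I would count. A lower bound $|F_n| \geq 3^{(1-1/k)\binom{n}{2}} \cdot k^n/k!$ (respectively $4^{(1-1/k)\binom{n}{2}} \cdot k^n/k!$) is immediate by first choosing a $k$-partition of $[n]$ and then orienting each inter-part pair in one of three ways for oriented graphs (four for digraphs), and this matches the order of magnitude of a single container. Bounding the non-$k$-partite $G \in F_n$ contained in a fixed $C$ is the core task. The key claim is that a single intra-part edge $uv$ with $u,v \in V_i^C$ in a $T_{k+1}$-free $G \subseteq C$ forces a multiplicative loss of $2^{-\Omega(n)}$ in the number of allowed orientations of the inter-part edges incident to $\{u,v\}$: for each transitive $T_{k-1}$-sub-tournament on the other parts of $C$, at least one completion of $uv$ to a $T_{k+1}$ is forbidden in $G$, and a link count turns this into a linear-in-$n$ loss. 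Hence the number of non-$k$-partite $G \in F_n$ contained in $C$ with $j \geq 1$ intra-part edges is at most $\binom{\delta n^2}{j} \cdot 3^{(1-1/k)\binom{n}{2}} \cdot 2^{-\Omega(jn)}$; summing over $j$ and over the $2^{o(n^2)}$ containers gives $o(|F_n|)$, as required. The principal obstacle is this last step: rigorously establishing the per-edge loss, especially when several intra-part edges share vertices, demands a careful supersaturation/link analysis that is likely the most delicate part of the proof.
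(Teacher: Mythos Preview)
Your overall strategy---containers plus stability plus a refined count exploiting the cost of a non-crossing edge---is exactly the paper's, and your per-edge loss idea (a non-crossing edge $uv$ forbids at least one of the $3^{2(k-1)}$ configurations between $\{u,v\}$ and each of linearly many disjoint $T_{k-1}$'s in the other parts) is precisely the mechanism the paper uses. The stability step via the underlying $K_{k+1}$-free graph is a legitimate shortcut; the paper instead proves a weighted digraph stability lemma directly, but either works.

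The genuine gap is in the final arithmetic. You propose to sum, over all $2^{o(n^2)}$ containers $C$, a per-container bound of order $3^{(1-1/k)\binom{n}{2}}\cdot 2^{-\Omega(n)}$ on the non-$k$-partite graphs inside $C$. But $2^{o(n^2)}\cdot 2^{-\Omega(n)}$ is not $o(1)$: the container count---of order $2^{n^{2-1/m(T_{k+1})+o(1)}}$---swamps any single-exponential saving, and equally swamps the factor $k^n$ in the lower bound on $|F_n|$. Containers by themselves cannot bridge ``almost $k$-partite'' to ``exactly $k$-partite''.

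The paper decouples the two phases. Containers are used only to show that all but a $2^{-\varepsilon n^2}$-fraction of $T_{k+1}$-free oriented graphs are \emph{almost} $k$-partite; here the saving is $2^{-\Theta(n^2)}$ and does beat the container count. For the exact statement the paper then discards containers and instead sums over the at most $k^n$ choices of an \emph{optimal} $k$-partition $Q$. For each $Q$ one bounds the number of almost-$k$-partite $T_{k+1}$-free graphs with optimal partition $Q$ and at least one non-crossing edge by $3^{t_k(n)}\cdot C\cdot 2^{-\eta n}$; now the comparison is $k^n\cdot 3^{t_k(n)}\cdot 2^{-\eta n}$ against the lower bound $T(n,k)\ge k^n 3^{t_k(n)}/(2k!\,n^{k-1})$, and the $k^n$ factors cancel. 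The per-partition bound is established by induction on $n$: fix a non-crossing edge $xy$, remove $x,y$, show that the optimal partition of $G-\{x,y\}$ lies in a set of only $e^{o(n)}$ partitions, apply the inductive hypothesis on $n-2$ vertices, and recover the $2^{-\Omega(n)}$ loss from the edges between $\{x,y\}$ and the disjoint $T_{k-1}$'s exactly as you describe. The induction is what handles the ``several intra-part edges sharing vertices'' issue you correctly flagged.
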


Theorem~\ref{thm:main1} can be viewed as a directed version of the theorem of Kolaitis, Pr\"omel and Rothschild~\cite{KPR} mentioned earlier. Note also that {\rm (i)} means that the typical structure of a $T_3$-free oriented graph is \emph{not} close to that of the extremal $T_3$-free oriented graph: it is easy to see that the latter is the blow up of a directed triangle\COMMENT{By Tur\'an's theorem the unique extremal (undirected) graph for $K_4$ is the complete balanced tripartite graph, which is the underlying undirected graph of the blow up of a directed triangle. So if an oriented graph has more edges than that of the blow up of a directed triangle then its underlying undirected graph contains a copy of $K_4$. But every orientation of $K_4$ contains a copy of $T_3$. So indeed the blow up of a directed triangle is the extremal $T_3$-free oriented graph (uniqueness follows since the underlying undirected graph of any extremal $T_3$-free oriented graph is the complete balanced tripartite graph, by the uniqueness part of Tur\'an's theorem, and clearly the only $T_3$-free orientation of the complete balanced tripartite graph is the blow up of a directed triangle).} (this fact was probably the motivation for Conjecture~\ref{Cherlin Conj}{\rm (i)}).

Our next main result shows in particular that part {\rm (ii)} of Conjecture~\ref{Cherlin Conj} is in fact false. We actually show something stronger, namely that for all $k\geq 3$ and for almost all $C_k$-free oriented graphs on $n$ vertices, the number of edges we must change in order to get an acyclic oriented graph is $\Omega(n/\log n)$. We also prove an analogous version of this result for digraphs. However, Conjecture~\ref{Cherlin Conj}{\rm (ii)} is not too far from being true, as we prove also that almost all $C_k$-free oriented graphs are close to acyclic, in the sense that we only need to change sub-quadratically many edges in order to obtain an acyclic oriented graph. In the case when $k$ is even we prove an analogous version of this result for digraphs too. We also obtain a (less restrictive) structural result for odd $k$.

In order to state the theorem precisely we need to introduce a little terminology. Given a labelled digraph or oriented graph $G$ with vertex labels $1,\dots, n$ and an ordering $\sigma: [n]\rightarrow [n]$, a \emph{backwards edge} in $G$ with respect to this ordering is any edge directed from a vertex labelled $i$ to a vertex labelled $j$, where $\sigma(i)>\sigma(j)$. A \emph{transitive-optimal ordering} of $G$ is any ordering of $V(G)$ that minimises the number of backwards edges in $G$ with respect to the ordering. We say that a directed graph is a \emph{transitive-bipartite blow up} if it can be obtained from a transitive tournament by replacing some of its vertices by complete balanced bipartite digraphs. More formally, a directed graph $G=(V,E)$ is a transitive-bipartite blow up if $V$ admits a partition $A_1,\dots, A_t$ such that for all $i,j\in [t]$ with $i<j$, the graph induced on $G$ by $A_i$ is either a single vertex or a complete balanced bipartite digraph (with edges in both directions) and the edges in $E$ between $A_i$ and $A_j$ are precisely those edges directed from $A_i$ to $A_j$ (and no others).

\begin{theorem}\label{C_k free main theorem}
Let $k, n\in \mathbb{N}$ with $k\geq 3$. There exists $c>0$ such that for every $\alpha>0$ the following hold.
\begin{enumerate}[{\rm (i)}]
\item Almost all $C_k$-free oriented graphs on $n$ vertices have between $c n/\log n$ and $\alpha n^2$ backwards edges in a transitive-optimal ordering.
\item Almost all $C_k$-free digraphs on $n$ vertices have at least $c n/\log n$ backwards edges in a transitive-optimal ordering. Moreover,
\begin{enumerate}[{\rm (a)}]
\item if $k$ is even then almost all $C_k$-free digraphs on $n$ vertices have at most $\alpha n^2$ backwards edges in a transitive-optimal ordering,
\item if $k$ is odd then almost all $C_k$-free digraphs on $n$ vertices can be made into a subgraph of a transitive-bipartite blow up by changing at most $\alpha n^2$ edges.
\end{enumerate}
\end{enumerate}
\end{theorem}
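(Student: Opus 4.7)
Our plan is to combine the hypergraph container method with appropriate stability results for the relevant extremal digraph problems, together with a direct counting argument for the lower bound. We focus on part~(i); parts~(ii)(a) and (ii)(b) follow the same template once the correct extremal structure and stability statement are substituted. The first task is to prove a stability result of the following form: for every $\eps>0$ there is $\b>0$ such that every $C_k$-free oriented graph on $n$ vertices with at least $\binom{n}{2}-\b n^2$ edges admits a vertex ordering with at most $\eps n^2$ backwards edges. The analogue for digraphs with $k$ even has the same conclusion, while for digraphs with $k$ odd the conclusion is that the digraph can be made into a subgraph of a transitive-bipartite blow up by changing at most $\eps n^2$ edges. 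We would attempt this via a directed regularity lemma followed by a structural analysis of the reduced digraph: any ``backwards'' regular pair combined with a sufficiently long forward path would close up to a copy of $C_k$, forcing the reduced digraph to look essentially like a transitive tournament (respectively, a transitive-bipartite blow up).

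With the stability result in hand, we apply the hypergraph container theorem of Balogh--Morris--Samotij and Saxton--Thomason to the $k$-uniform hypergraph on the arc-set of $[n]$ whose edges are the arc-sets of copies of $C_k$. A routine co-degree calculation yields a family $\mathcal{F}$ of containers with $\log_2|\mathcal{F}|=o(n^2)$ such that every $C_k$-free oriented graph is a subgraph of some $F\in\mathcal{F}$, and each $F$ contains only $o(n^k)$ copies of $C_k$. Supersaturation for the extremal problem forces each $F$ to have at most $\binom{n}{2}+o(n^2)$ edges; the stability statement then gives for each $F$ an ordering of $V(F)$ with at most $\a n^2$ backwards edges. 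Since every $C_k$-free oriented graph is a subgraph of some such $F$, it inherits the same bound in the same ordering (and in particular in its transitive-optimal ordering). This proves the upper bound in~(i); the upper bounds in (ii)(a) and (ii)(b) follow analogously with the transitive-bipartite blow-up structure replacing the transitive tournament for odd $k$.

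For the lower bound, every $C_k$-free oriented graph on $[n]$ whose transitive-optimal ordering has at most $m$ backwards edges is obtained from an acyclic oriented graph on $[n]$ by reversing at most $m$ of its edges, giving the upper bound $a_n\binom{\binom{n}{2}}{\leq m}$ for the number of such graphs, where $a_n$ denotes the number of acyclic oriented graphs on $[n]$. In the other direction, starting from the transitive tournament on $[n]$ and, for each independent set $R\subseteq[n-1]$, reversing the edges $(i,i+1)\mapsto(i+1,i)$ for $i\in R$ produces a $C_k$-free oriented graph (a short case analysis rules out cycles of length $k$); varying the non-consecutive forward edges freely and summing over the $n!$ orderings exhibits at least $n!\cdot 2^{\binom{n}{2}+\Omega(n)}/\mathrm{poly}(n)$ distinct $C_k$-free oriented graphs, the overcounting being controlled by the rigidity of dense oriented graphs. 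Since $a_n\leq n!\cdot 2^{\binom{n}{2}}$, the ratio of the bad count to the total is at most $\mathrm{poly}(n)\cdot 2^{O(m\log n)-\Omega(n)}$, which tends to $0$ whenever $m\leq cn/\log n$ for a suitable absolute constant $c>0$. The same counting works verbatim for digraphs, establishing the lower bound in both (i) and (ii).

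The hardest step is the stability result, in particular for odd $k$ in the digraph case, where the extremal structure is an iterated transitive-bipartite blow up rather than a tournament: there one must identify both the linear blow-up order and the bipartite structure inside each large block simultaneously, which seems to require a more delicate structural analysis than the tournament case, where a straightforward ordering of the clusters of the reduced digraph suffices.
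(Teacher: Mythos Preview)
Your outline for the upper bounds (containers, supersaturation, stability) matches the paper's route. One subtlety you gloss over, however, matters for part~(i): the containers $F$ are \emph{digraphs}, not oriented graphs, and the number of oriented subgraphs of $F$ is $2^{e_{\log 3}(F)}$ rather than $2^{e(F)}$. You cannot apply your oriented-graph stability statement to $F$, and your digraph stability for odd $k$ only places $F$ near a transitive-bipartite blow-up, whose oriented subgraphs need not be close to acyclic. The paper handles this by proving a \emph{weighted} stability lemma (their Lemma~6.4): any $C_k$-free digraph $G$ with $e_a(G)\ge\binom{n}{2}-\delta n^2$ for some fixed $a<2$ is close to $T_n$ for all $k$. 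Taking $a=\log 3$ lets one discard containers with small $e_{\log 3}$ as contributing few oriented subgraphs, and forces the remaining ones to be close to transitive.

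The lower bound has a genuine gap. Your construction---reverse the arcs $(i,i+1)$ for $i$ in an independent set $R\subseteq[n-1]$ and then vary the non-consecutive forward arcs---produces only \emph{acyclic} oriented graphs. When $R$ is independent, the transpositions $(i,i+1)$ for $i\in R$ are disjoint, so the reversed tournament is just the transitive tournament under a new linear order; deleting further forward arcs preserves acyclicity. Hence your family lies inside $\mathcal{O}_{n,k,0}$ and has size at most $a_n\le n!\,2^{\binom{n}{2}}$; the claimed count $n!\,2^{\binom{n}{2}+\Omega(n)}$ is false (indeed the raw count $n!\cdot F_n\cdot 2^{\binom{n}{2}-(n-1)}$ is $n!\,2^{\binom{n}{2}-\Theta(n)}$). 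The paper instead compares $|\mathcal{O}_{n,k,m_2}|$ directly to $|\mathcal{O}_{n,k,\le m_1}|$ by a double count: from each acyclic graph one selects $m_2$ disjoint consecutive blocks of four vertices (``flippable $4$-sets'') and replaces each by a directed $4$-cycle. Each inserted $4$-cycle is a genuine cycle, so the result lies in $\mathcal{O}_{n,k,m_2}$, and the reverse map has degree at most $2^{O(m_2)}$. This is the missing idea.
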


We believe that in fact almost all $C_k$-free oriented graphs have linearly many backwards edges, and that an analogous result holds for $C_k$-free digraphs in the case when $k$ is even.

\begin{conjecture}
Let $k, n\in \mathbb{N}$ with $k\geq 3$. Then the following hold.
\begin{enumerate}[{\rm (i)}]
\item Almost all $C_k$-free oriented graphs on $n$ vertices have $\Theta (n)$ backwards edges in a transitive-optimal ordering.
\item If $k$ is even then almost all $C_k$-free digraphs on $n$ vertices have $\Theta (n)$ backwards edges in a transitive-optimal ordering.
\end{enumerate}
\end{conjecture}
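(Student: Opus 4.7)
Both parts of the conjecture ask to sharpen Theorem~\ref{C_k free main theorem}: the lower bound on the number of backwards edges has to be improved from $\Omega(n/\log n)$ to $\Omega(n)$, and the upper bound from ``$\alpha n^2$ for every $\alpha>0$'' all the way down to $O(n)$. The plan is to treat the two bounds by rather different arguments, and in both cases Theorem~\ref{C_k free main theorem} itself lets us assume from the outset that the oriented graph (or, for part~(ii), the digraph with $k$ even) admits an ordering $\sigma$ in which the backwards-edge set $B$ satisfies $|B|\le \eps n^2$.

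\textbf{Upper bound via a second container application.} Fix the near-transitive ordering $\sigma$ and view $B$ as an ``independent set'' in an auxiliary hypergraph $\mathcal{H}_\sigma$ whose hyperedges record the minimal configurations of backwards pairs that, together with a forward $(k{-}1)$-path, close to a copy of $C_k$. A supersaturation statement --- informally, ``a $C_k$-free oriented graph close to transitive cannot contain many backwards edges without already containing many copies of $C_k$'' --- should verify the container hypotheses for $\mathcal{H}_\sigma$, producing $2^{o(n)}$ shapes for $B$, each of cardinality $O(n)$. Multiplying the $2^{\binom{n}{2}}$ choices of forward edge set by the $\binom{O(n)}{|B|}$ choices of $B$ inside each container, summing over $\sigma$ and containers, and dividing by the Robinson--Stanley count of labelled acyclic oriented graphs on $[n]$ (which are automatically $C_k$-free) then gives the desired $O(n)$ upper bound on the typical number of backwards edges.

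\textbf{Lower bound via switching.} Let $\mathcal{A}_n$ denote the $C_k$-free oriented graphs with fewer than $cn$ backwards edges in a transitive-optimal ordering. For each $G\in\mathcal{A}_n$ I would try to show that there are $\omega(1)$ forward edges whose reversal (i)~creates no copy of $C_k$ and (ii)~produces a graph outside $\mathcal{A}_n$; a double count then forces $|\mathcal{A}_n|=o(|\mathcal{N}_n|)$, where $\mathcal{N}_n$ is the full family of $C_k$-free oriented graphs. Condition~(i) reduces to the local statement that a typical forward edge of $G$ lies in very few forward $(k{-}1)$-paths, which is plausible because $G$ is close to a transitive tournament; condition~(ii) requires the switches to be matched canonically so that distinct pairs $(G,e)$ yield distinct targets.

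\textbf{Main obstacle.} The lower bound is the genuinely hard direction. The $\log n$ factor in Theorem~\ref{C_k free main theorem}(i) appears to be intrinsic to the container plus counting-lemma approach, which costs a $\log$-factor per iteration; removing it should require a method --- switching or entropy compression --- that does not iterate in this way. Implementing a switching scheme cleanly is delicate, because many distinct pairs $(G,e)$ can collide to the same target after reversal, particularly given how symmetric near-transitive tournaments are; establishing the required distinctness is where the argument is likely to be most fragile, and may call for a subtle canonical choice of the switched edge relative to $\sigma_G$.
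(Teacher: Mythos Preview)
The statement you are trying to prove is a \emph{conjecture} in the paper, not a theorem: the authors explicitly write ``We believe that in fact almost all $C_k$-free oriented graphs have linearly many backwards edges'' and then state this as an open problem. There is no proof in the paper to compare against, and your proposal is not a proof either --- it is, as you essentially acknowledge, a research outline with substantial gaps you have yet to fill.

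On the substance of the outline: the upper bound sketch relies on a supersaturation statement (``a $C_k$-free oriented graph close to transitive cannot contain many backwards edges without already containing many copies of $C_k$'') that you state without proof and which is far from obvious --- indeed, a near-transitive tournament with $\Theta(n)$ carefully placed backwards edges need not contain any $C_k$ at all, so the claimed supersaturation would have to be formulated much more delicately than you suggest. For the lower bound, you correctly identify the switching collision problem as the crux, but offer no mechanism for resolving it; the paper's own switching argument (Lemma~\ref{linearly many backwards}) works with flippable $4$-sets rather than single edges precisely to control such collisions, and even then only achieves $\Omega(n/\log n)$. Neither direction of your plan is close to complete, and the conjecture remains open.
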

It is not clear to us what to expect in the case when $k$ is odd\COMMENT{We do NOT have that almost all $C_k$-free digraphs on $n$ vertices are bipartite, if $k$ is odd. This is since the number of acyclic digraphs is approximately
$$n!\frac{2^{\binom{n}{2}}}{(0.474...)(1.488)^n}\geq \frac{n!2^{\binom{n}{2}}}{2^n}\geq n!2^{\frac{n^2}{2}-2n}\geq \left( \frac{n}{e} \right)^n 2^{\frac{n^2}{2}-2n},$$
whereas the number of bipartite digraphs is at most $2^n4^{n^2/4} = 2^{n^2/2}2^n$.
Additional comment: A similar argument as in Lemma~\ref{linearly many backwards} where flippable 4-sets are replaced by double edges (in the $C_3$-free case) shows that we must have at least a linear number of backwards edges}.

\begin{question}
Suppose that $k$ is odd and $\alpha >0$. Do almost all $C_k$-free digraphs on $n$ vertices have at most $\alpha n^2$ backwards edges in a transitive-optimal ordering?
\end{question}

An undirected version of Theorem~\ref{C_k free main theorem} for forbidden odd cycles was proved by Lamken and Rothschild~\cite{LaRo}, who showed that for odd $k$, almost all $C_k$-free graphs are bipartite. (So the situation for oriented graphs is very different from the undirected one.) For even $k$ the undirected problem is far more difficult. Despite major recent progress by Morris and Saxton~\cite{MoSa} the problem of counting the number of $C_k$-free graphs is still open for even $k$.

We remark that in Theorem~\ref{thm:main1} we actually get exponential bounds on the proportion of $T_{k+1}$-free oriented graphs and digraphs that are not $k$-partite. We also get similar exponential bounds in Theorem~\ref{C_k free main theorem}.

\subsection{Sketch of proofs}\label{sec:sketch}

A key tool in our proofs is a recent and very powerful result of Saxton and Thomason~\cite{SaTh} as well as Balogh, Morris and Samotij~\cite{BMS}, which gives an upper bound on the number of independent sets in certain hypergraphs. 
Briefly, the result states that under suitable conditions on a uniform hypergraph $G$, there is a small collection $\mathcal{C}$ of small subsets (known as containers) of $V(G)$ such that every independent set of vertices in $G$ is a subset of some element of $\mathcal{C}$. We will use the formulation of Saxton and Thomason~\cite{SaTh}. The precise statement of this result (Theorem~\ref{Original Containers}) is deferred until Section~\ref{Section: Digraph containers}. Roughly speaking, the use of hypergraph containers allows us to reduce an asymptotic counting problem to an extremal problem.
It should be noted that the method of hypergraph containers is much more general than Theorem~\ref{Original Containers}.
Saxton and Thomason~\cite{SCFSH} also gave a short proof of a somewhat weaker version of Theorem~\ref{Original Containers}, which would still have been
sufficient for our purposes. 

Our approach to proving our main results is as follows. Firstly, in Section~\ref{Section: Digraph containers} we use the main result of~\cite{SaTh} to derive a container result which is applicable to digraphs. Then in Section~\ref{Section: Rough structure of $T_k$-free oriented graphs and digraphs} we apply this digraph containers result in a relatively standard way to show that almost all $T_{k+1}$-free oriented graphs, and almost all $T_{k+1}$-free digraphs, are close to $k$-partite (see Lemma~\ref{step 0}). In Section~\ref{Section: Exact structure of $T_k$-free oriented graphs and digraphs} we combine Lemma~\ref{step 0} with an inductive argument to prove the results on the exact structure of typical $T_{k+1}$-free oriented graphs and digraphs given by Theorem~\ref{thm:main1}.

In Section~\ref{Section: Rough structure of $C_k$-free oriented graphs and digraphs} we use the digraph containers result to show that almost all $C_k$-free oriented graphs are close to acyclic, and that the analogous result for digraphs holds in the case when $k$ is even (see Lemma~\ref{quadratically many backwards}{\rm (i), (ii)}). For odd $k$ we show that almost all $C_k$-free digraphs are close to a subgraph of a transitive-bipartite blow up (see Lemma~\ref{quadratically many backwards}{\rm (iii)}). Finally, in Section~\ref{Section: Typical C_k-free oriented graphs and digraphs are not transitive} we complete the proof of Theorem~\ref{C_k free main theorem} by giving a lower bound on the number of backwards edges in $C_k$-free oriented graphs and digraphs, the upper bounds in Theorem~\ref{C_k free main theorem} being given by Lemma~\ref{quadratically many backwards}.

As part of the proofs in Sections~\ref{Section: Rough structure of $T_k$-free oriented graphs and digraphs} and~\ref{Section: Rough structure of $C_k$-free oriented graphs and digraphs} we prove several stability results on digraphs which we believe are of independent interest:
\begin{enumerate}[{\rm (i)}]
\item Suppose $k\in \mathbb{N}$ and $G$ is a $T_{k+1}$-free digraph on $n$ vertices with $e(G)\geq {\rm ex}_{di}(n, T_{k+1}) - o(n^2)$. Then $G$ is close to a complete balanced $k$-partite digraph. (See Lemma~\ref{lem:sta}).
\item Suppose $k\in \mathbb{N}$ with $k\geq 4$ and $k$ even, and suppose $G$ is a $C_k$-free digraph on $n$ vertices with $e(G)\geq {\rm ex}_{di}(n, C_k) - o(n^2)$. Then $G$ is close to a transitive tournament. (See Lemma~\ref{cycle stability digraphs}).
\item Suppose $k\in \mathbb{N}$ with $k\geq 3$ and $k$ odd, and suppose $G$ is a $C_k$-free digraph on $n$ vertices with $e(G)\geq {\rm ex}_{di}(n, C_k) - o(n^2)$. Then $G$ is close to a transitive-bipartite blow up. (See Lemma~\ref{odd cycle stability}).
\end{enumerate}
Here ${\rm ex}_{di}(n, H)$ denotes the maximum number of edges among all $H$-free digraphs on $n$ vertices. The corresponding Tur\'an type results which determine ${\rm ex}_{di}(n, H)$ for $H=T_k$ and $H=C_k$ were proved by Brown and Harary~\cite{BrHa} and H\"aggkvist and Thomassen~\cite{HaTh} respectively. These stability results are used in the proofs of Theorems~\ref{thm:main1}{\rm (ii)} and~\ref{C_k free main theorem}{\rm (ii)}. We actually prove `weighted' generalisations of {\rm (i)} and {\rm (ii)} which can be used to prove the assertions about oriented graphs in Theorems~\ref{thm:main1}{\rm (i)} and~\ref{C_k free main theorem}{\rm (i)}.

Before starting on any of this however, we lay out some notation and set out some useful tools in Section~\ref{Section: Notation and tools}, below.

\section{Notation and tools}\label{Section: Notation and tools}
For a set $X$ we let $X^{(r)}$ denote the set of all (unordered) subsets of $X$ of size $r$. An $r$-\emph{uniform hypergraph}, or $r$-\emph{graph}, is a pair $(V,E)$ where $V$ is a set of vertices and $E\subseteq V^{(r)}$. If $G=(V,E)$ is a graph, digraph, oriented graph or $r$-graph, we let $V(G):= V$, $E(G):= E$, $v(G):= |V(G)|$, and $e(G):=|E(G)|$. For a digraph $G=(V,E)$ define $\Delta^0(G)$ as the maximum of $d^+(v)$ and $d^-(v)$ among all $v \in V$. We write $uv$ for the edge directed from $u$ to $v$. For a vertex $v\in V$, define the out-neighbourhood of $v$ in $G$ to be $N^+_G(v):= \{u\in V: vu\in E\}$, and similarly define the in-neighbourhood of $v$ in $G$ to be $N^-_G(v):= \{u\in V: uv\in E\}$. Given a set $U\subseteq V$, we sometimes also write $N^+_U(v):=N^+_G(v)\cap U$ and define $N^-_U(v)$ similarly. For disjoint subsets $U, U'\subseteq V$ we let $G[U, U']$ denote the subdigraph of $G$ with vertex set $U\cup U'$ whose edge set consists of all edges between $U$ and $U'$ in $G$ (in both directions). We let $e(U, U'):= e(G[U, U'])$. If $Q$ is a $k$-partition of $[n]$ with partition classes $V_1\dots, V_k$, and $G$ is a (di)graph or oriented graph with vertex set $[n]$, we say that $Q$ is a $k$-\emph{partition of} $G$ if for every $i\in [k]$ we have that $E(G)$ contains no edges $uv$ with $u,v\in V_i$. We assume $k$-partitions to be unordered unless otherwise stated. For two digraphs $G$ and $G'$ on vertex set $[n]$, we write $G= G' \pm \eps n^2$ if $G$ can be obtained from $G'$ by changing (i.e.~adding, deleting, or changing the orientation of) at most $\eps n^2$ edges. Given an $r$-graph $H=(V,E)$ and $\sigma \in V^{(d)}$, where $0\leq d\leq r-1$, let $d_H(\sigma):=|\{e\in E : \sigma \subseteq e\}|$ be the \textit{degree} of $\sigma$ in $H$. We may simply write $d(\sigma)$ for $d_H(\sigma)$ when it is obvious which $r$-graph $H$ we are working with. The \emph{average vertex degree} of $H$ is defined to be $(1/|V|)\sum_{v\in V} d_H(\{v\})$. Given an oriented graph $H$ and a digraph $H'$,
\begin{itemize}
\item let $f(n, H)$ denote the number of labelled $H$-free oriented graphs on $n$ vertices,
\item let $T(n,k)$ denote the number of labelled $k$-partite oriented graphs on $n$ vertices,
\item let $f^*(n, H')$ denote the number of labelled $H'$-free digraphs on $n$ vertices,
\item let $T^*(n,k)$ denote the number of labelled $k$-partite digraphs on $n$ vertices.
\end{itemize}
In some proofs, given $a,b\in \mathbb{R}$ with $0<a,b<1$, we will use the notation $a\ll b$ to mean that we can find an increasing function $g$ for which all of the conditions in the proof are satisfied whenever $a\leq g(b)$. Throughout the paper we write $\log x$ to mean $\log_2 x$, and we assume all graphs, oriented graphs, and digraphs to be labelled unless otherwise stated. We also assume all large numbers to be integers, so that we may sometimes omit floors and ceilings for the sake of clarity.

We define $H(p):=-p\log p-(1-p)\log (1-p)$, the binary entropy function. The following bound will prove useful to us. For $n\geq 1$ and $0<p<1/2$,
\begin{equation}\label{entropy bound}
\binom{n}{\leq pn}:= \sum\limits_{i=0}^{\left\lfloor pn \right\rfloor} \binom{n}{i} \leq 2^{H(p)n}.
\end{equation}

In a number of our proofs we shall also use the following Chernoff bound.
\begin{theorem}[Chernoff bound]\label{Chernoff Bounds}
Let $X$ have binomial distribution and let $a>0$. Then
$$P(X<\mathbb{E}[X]-a)<\exp \left( -\frac{a^2}{2\mathbb{E}[X]} \right).$$
\end{theorem}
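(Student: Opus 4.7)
The plan is to use the exponential moment method (also known as Bernstein's trick or the Cram\'er--Chernoff method), which is the standard route to Chernoff-type inequalities. Write $X = X_1 + \dots + X_n$ as a sum of independent Bernoulli variables with $P(X_i = 1) = p_i$, and let $\mu := \mathbb{E}[X] = \sum_i p_i$. For any $t > 0$, Markov's inequality applied to the positive random variable $e^{-tX}$ gives
$$P(X < \mu - a) = P\bigl(e^{-tX} > e^{-t(\mu - a)}\bigr) \leq e^{t(\mu - a)} \,\mathbb{E}[e^{-tX}].$$

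Next I would bound the moment generating function. Independence yields $\mathbb{E}[e^{-tX}] = \prod_i (1 - p_i + p_i e^{-t})$, and applying the inequality $1 + x \leq e^x$ with $x = p_i(e^{-t} - 1)$ gives $\mathbb{E}[e^{-tX}] \leq \exp\bigl(\mu(e^{-t} - 1)\bigr)$. Combining this with the previous display produces
$$P(X < \mu - a) \leq \exp\bigl(t(\mu - a) + \mu(e^{-t} - 1)\bigr).$$
At this point the natural move is to restrict attention to $0 < t \leq 1$ and use the elementary estimate $e^{-t} - 1 \leq -t + t^2/2$, which replaces the right-hand side by $\exp(-ta + \mu t^2/2)$.

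The final step is to optimise over $t$. Differentiating the exponent $-ta + \mu t^2/2$ gives the optimal choice $t = a/\mu$, which is indeed in $(0,1]$ provided $a \leq \mu$ (and the inequality is vacuous when $a > \mu$, since then the left-hand side is zero). Substituting back yields the claimed bound $\exp(-a^2/(2\mu))$. The only delicate part is ensuring that the approximation $e^{-t}-1 \leq -t + t^2/2$ is applied in a range where it is valid and tight enough; everything else is a routine substitution. I expect no real obstacle, since this is one of the classical incarnations of the Chernoff bound.
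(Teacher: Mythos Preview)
The paper does not supply its own proof of this statement; the Chernoff bound is simply quoted as a standard probabilistic tool and used later without justification. Your argument via the exponential moment method is the classical derivation and is correct.

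Two small remarks. First, the restriction to $0 < t \le 1$ is unnecessary: the estimate $e^{-t} - 1 \le -t + t^2/2$ in fact holds for every $t \ge 0$ (differentiate twice), so nothing breaks even if $a/\mu > 1$; in any case that range is vacuous since $X \ge 0$ forces $P(X < \mu - a) = 0$ whenever $a \ge \mu$. Second, the statement in the paper carries a \emph{strict} inequality, whereas your chain of estimates naturally produces $\le$. To upgrade to $<$, observe that for $\mu > 0$ and $t > 0$ the step $1 + x \le e^x$ is strict for at least one factor in the product, and the step $e^{-t} - 1 \le -t + t^2/2$ is strict as well; either suffices. This is cosmetic rather than a genuine gap.
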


\section{Digraph containers}\label{Section: Digraph containers}

Our main tool is Theorem~\ref{Original Containers} from~\cite{SaTh}. Given a hypergraph $G$ satisfying certain degree conditions, it gives a small set of almost independent sets in $G$ (containers) which together contain all independent sets of $G$. In our applications the vertex set of $G$ will be the edge set of the complete digraph, and the hyperedges will correspond to copies of the forbidden subdigraph. To formulate the degree conditions we need the following definition.

\begin{definition}
Let $G$ be an $r$-graph on $n$ vertices with average vertex degree $d$. Let $\tau >0$. Given $v\in V(G)$ and $1\leq j\leq r, n$, let
$$d^{(j)}(v):= \max \{d(\sigma) : v\in \sigma \subseteq V(G), |\sigma|=j \}.$$
If $d>0$ we define $\delta_j= \delta_j(\tau)$ by the equation
$$\delta_j \tau^{j-1}nd = \sum\limits_{v\in V(G)} d^{(j)}(v).$$
Then the \textit{co-degree function} $\delta(G, \tau)$ is defined by
$$\delta(G, \tau):= 2^{\binom{r}{2}-1} \sum\limits_{j=2}^r 2^{-\binom{j-1}{2}} \delta_j.$$
If $d=0$ we define $\delta(G, \tau):= 0$.
\end{definition}

\begin{theorem}\cite[Corollary 2.7]{SaTh}\label{Original Containers}
\label{thm:ST}
Suppose that $0< \eps < \frac12$ and $\tau \le \frac{1}{144 r^2 ! r}$. Let $G$ be an $r$-graph with vertex set $[n]$ satisfying $\d(G, \tau) \le \frac{\eps}{12 r!}$. Then there exists a constant $c= c(r)$ and a collection $\C$ of subsets of $[n]$ with the following properties.
\begin{enumerate}[{\hspace{0.11cm} \rm (a)}]
\item For every independent set $I$ of $G$ there exists $C\in \C$ such that $I \subseteq C$.
\item $e(G[C]) \le \eps e(G)$ for all $C\in \C$.
\item $\log |\C| \le c \log(\frac{1}{\eps}) n \tau \log(\frac{1}{\tau})$.
\end{enumerate}
\end{theorem}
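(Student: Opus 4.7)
The plan is to follow the method introduced by Saxton and Thomason: construct a deterministic algorithm $\mathcal{A}$ which, given any subset $S \subseteq [n]$, returns a pair $(T(S), C(S))$. The algorithm is designed so that when run on an independent set $I$, it outputs a ``fingerprint'' $T(I) \subseteq I$ of size at most $n\tau$, together with a container $C(I) \supseteq I$ with $e(G[C(I)]) \le \eps e(G)$, and crucially $\mathcal{A}$ satisfies $\mathcal{A}(T(I)) = \mathcal{A}(I)$, so that $C(I)$ is a function of $T(I)$ alone. The collection $\C := \{C(T) : T \subseteq [n], |T| \le n\tau\}$ then satisfies (a) and (b), and (c) follows from the entropy estimate~(\ref{entropy bound}) giving $\log \binom{n}{\le n\tau} = O(n\tau \log(1/\tau))$, with an additional $\log(1/\eps)$ factor arising from iterating the algorithm in stages.

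The algorithm maintains a current ``available set'' $A \subseteq [n]$, initialised to $[n]$, and a current fingerprint $T_{\rm cur}$, initialised to $\emptyset$. At each step, examine the induced hypergraph $G[A]$: if $e(G[A]) \le \eps e(G)$, stop and output $C := A$. Otherwise, process the vertices of $A$ in a canonical order (say by decreasing $d^{(1)}$ in $G[A]$ with ties broken by vertex label). When processing $v$, if $v \in I$ add $v$ to $T_{\rm cur}$; in either case, use the neighbourhood structure in $G$ to remove from $A$ any vertex $u$ such that $\{v,u\}$ together with the previously chosen fingerprint vertices sits in a set $\sigma$ of ``high'' degree $d(\sigma)$. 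Since the thresholds for ``high'' depend only on $G$ and the current state, rerunning $\mathcal{A}$ on $T(I)$ alone reproduces the same sequence of decisions, yielding the key identity $\mathcal{A}(T(I)) = \mathcal{A}(I)$.

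The heart of the argument is to bound how fast $e(G[A])$ decreases against how slowly $|T_{\rm cur}|$ grows. The weighted sum defining $\delta(G, \tau)$ is tailored for exactly this trade-off: $\delta_j$ measures the average maximum $j$-degree relative to the overall degree $d$, while the factor $2^{-\binom{j-1}{2}}$ reflects the geometric cost of conditioning on $j-1$ previously chosen fingerprint vertices. The hypothesis $\delta(G, \tau) \le \eps/(12 r!)$ then ensures that the co-degrees are large enough to shrink $A$ rapidly, so that after appending at most $n\tau$ fingerprint vertices the residual edge count drops below $\eps e(G)$. The assumption $\tau \le 1/(144 r^2! r)$ enters in guaranteeing $n\tau < n/2$ (so that~(\ref{entropy bound}) applies) and in making the various rounding and union-bound arguments go through.

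The main obstacle is the quantitative book-keeping in this trade-off: one must balance contributions from co-degrees of \emph{all} orders $2 \le j \le r$ simultaneously and, at each iteration, decide whether to shrink $A$ by ``brute force'' using high co-degrees with the current fingerprint or to append another vertex of $I$ to the fingerprint. I would first treat the graph case $r=2$, which already captures the essential ideas using only $\delta_2$, and then pass to larger $r$ by a dyadic partitioning of the co-degrees at each scale $j$; the geometric weights $2^{-\binom{j-1}{2}}$ would emerge from iterating such a dyadic decomposition across the $r-1$ co-degree levels.
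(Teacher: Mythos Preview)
This theorem is not proved in the paper: it is quoted verbatim as Corollary~2.7 of Saxton--Thomason~\cite{SaTh} and used as a black box. So there is no ``paper's own proof'' to compare against; the paper simply cites the result and moves on to apply it (via Lemma~\ref{deltabound} and the supersaturation lemma) to obtain Theorem~\ref{thm:contain}.

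Your sketch is a faithful outline of the Saxton--Thomason container algorithm and correctly identifies the key mechanism: a deterministic fingerprinting procedure with the self-reproducing property $\mathcal{A}(T(I))=\mathcal{A}(I)$, so that the container depends only on a small subset $T(I)\subseteq I$ of size $O(n\tau)$, whence $|\C|\le \binom{n}{\le n\tau}$. However, what you have written is a plan, not a proof. The ``main obstacle'' you flag---the quantitative bookkeeping that simultaneously controls co-degrees of all orders $2\le j\le r$ and produces the precise weights $2^{-\binom{j-1}{2}}$---is in fact the entire substance of the argument, and your final paragraph only gestures at how to handle it. In the actual proof the algorithm is run in $O(\log(1/\eps))$ rounds (this is where the $\log(1/\eps)$ factor in~(c) comes from), and within each round one must carefully define degree thresholds at every level $j$ and prove that a constant fraction of edges is removed per round; your description of ``remove from $A$ any vertex $u$ such that $\{v,u\}$ together with the previously chosen fingerprint vertices sits in a set $\sigma$ of high degree'' is too vague to make this work. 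If you intend to actually prove the statement rather than cite it, you would need to supply those details in full.
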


We will apply Theorem~\ref{thm:ST} to prove Theorem~\ref{thm:contain} below, which is a digraph analogue to \cite[Theorem 1.3]{SaTh}. To state this we need the following definitions. Given a digraph $G=(V,E)$, let $f_1(G)$ be the number of pairs $u, v\in V$ such that exactly one of $uv$ and $vu$ is an edge of $G$, and let $f_2(G)$ be the number of pairs $u, v\in V$ such that both $uv$ and $vu$ are edges of $G$. The following definition of the \emph{weighted size} of $G$ will be crucial in this paper. For $a\in \mathbb{R}$ with $a\geq 1$ we define
\[
e_a(G) := a\cdot f_2(G) + f_1(G).
\]
This definition allows for a unified approach to extremal problems on oriented graphs and digraphs. We will be mainly interested in the cases $a=2$ and $a=\log 3$. The former is useful because each digraph $G$ contains $4^{f_2(G)} 2^{f_1(G)} = 2^{e_2(G)}$ (labelled) subdigraphs, and the latter is useful because each digraph $G$ contains $3^{f_2(G)} 2^{f_1(G)} = 2^{e_{\log 3}(G)}$ (labelled) oriented subgraphs. Given a digraph $H$, define the \emph{weighted Tur\'an number} ${\rm ex}_a(n, H)$ as the maximum $e_a(G)$ among all $H$-free digraphs $G$ on $n$ vertices. (So ${\rm ex}_2(n, H)$ equals ${\rm ex}_{di}(n, H)$ which was defined in Section~\ref{sec:sketch}.) For $A, B\subseteq V$ we will sometimes write $e_a(A,B)$ to denote $e_a(G[A, B])$.

Given an oriented graph $H$ with $e(H)\ge 2$, we let
$$m(H)= \max\limits_{H'\subseteq H, e(H')>1} \frac{e(H')-1}{v(H')-2}.$$

\begin{theorem}\label{thm:contain}
Let $H$ be an oriented graph with $h:= v(H)$ and $e(H)\ge 2$, and let $a\in \mathbb{R}$ with $a\geq 1$. For every $\eps>0$, there exists $c>0$ such that for all sufficiently large $N$, there exists a collection $\C$ of digraphs on vertex set $[N]$ with the following properties.
\begin{enumerate}[{\hspace{0.11cm} \rm (a)}]
\item For every $H$-free digraph $I$ on $[N]$ there exists $G\in \C$ such that $I \subseteq G$.
\item Every digraph $G\in \C$ contains at most $\eps N^h$ copies of $H$, and $e_a(G) \le {\rm ex}_a(N, H) + \eps N^2 $.
\item $\log |\C| \le c N^{2 - 1/m(H)} \log N$.
\end{enumerate}
\end{theorem}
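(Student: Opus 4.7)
The plan is to apply Theorem~\ref{Original Containers} to an auxiliary $r$-uniform hypergraph $\mathcal{H}$ on vertex set $V(\mathcal{H}) := E(K^*_N)$, the edges of the complete digraph on $[N]$ (so $|V(\mathcal{H})| = N(N-1)$), whose hyperedges are the edge sets of the $\Theta_H(N^h)$ copies of $H$ inside $K^*_N$; here $r := e(H)$. Under this encoding, $H$-free digraphs on $[N]$ are precisely the independent sets of $\mathcal{H}$, so property (a) of Theorem~\ref{thm:contain} will follow directly from Theorem~\ref{Original Containers}(a).

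Next I would verify the co-degree hypothesis with $\tau := c_0 N^{-1/m(H)}$ for a sufficiently large constant $c_0 = c_0(\eps, H)$. The average vertex degree of $\mathcal{H}$ is $d = \Theta_H(N^{h-2})$, and for a set $\sigma \subseteq V(\mathcal{H})$ of $j$ edges of $K^*_N$ that embeds into $H$ as a subdigraph $H'$ on $v(H')$ vertices we have $d(\sigma) = O_H(N^{h-v(H')})$ (and $d(\sigma)=0$ otherwise). Summing over $v$ gives
$$\delta_j \;\le\; C_H \max_{\substack{H'\subseteq H\\ e(H') = j}} \frac{N^{2-v(H')}}{\tau^{j-1}},$$
and the definition of $m(H)$ ensures that for our choice of $\tau$ every exponent of $N$ on the right-hand side is non-positive, so by taking $c_0$ large enough we obtain $\delta(\mathcal{H},\tau) \le \eps'/(12 r!)$ for any prescribed $\eps'>0$. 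Theorem~\ref{Original Containers} then supplies a collection $\mathcal{C}$ of digraphs on $[N]$ with $\log|\mathcal{C}| = O(N^2 \tau \log(1/\tau)) = O_{\eps,H}(N^{2-1/m(H)}\log N)$, proving~(c), and with $e(\mathcal{H}[G]) \le \eps' e(\mathcal{H}) = O_H(\eps' N^h)$ for each $G\in\mathcal{C}$. Since $e(\mathcal{H}[G])$ is exactly the number of copies of $H$ in the digraph $G$, a suitable choice of $\eps'$ delivers the first half of property (b).

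It remains to show $e_a(G) \le {\rm ex}_a(N,H) + \eps N^2$ for each $G\in\mathcal{C}$, and for this I would prove a weighted supersaturation lemma: for every $\eps>0$ and $a\ge 1$ there is $\delta = \delta(\eps,H,a) > 0$ such that every digraph $F$ on $N$ vertices with $e_a(F) > {\rm ex}_a(N,H) + \eps N^2$ contains at least $\delta N^h$ copies of $H$. This follows from the usual Erd\H{o}s--Simonovits averaging argument: since $f_1$ and $f_2$ are linear edge-statistics, a uniformly random $k$-subset $S$ (with $k = k(\eps, H)$ large but fixed) satisfies $\mathbb{E}[e_a(F[S])] = \binom{k}{2}/\binom{N}{2} \cdot e_a(F)$, so a positive proportion of such $S$ have $e_a(F[S]) > {\rm ex}_a(k,H)$ and hence span a copy of $H$; a standard double count then yields $\Omega_{\eps,H}(N^h)$ copies of $H$ in $F$. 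Applying this contrapositively to each $G\in\mathcal{C}$, with $\eps'$ chosen small compared to $\delta$, completes the argument. The main obstacle is the supersaturation lemma itself: one must check that the averaging and the approximate monotonicity of ${\rm ex}_a(n, H)/\binom{n}{2}$ in $n$ go through cleanly when reciprocated pairs carry weight $a$ rather than $2$, but conceptually this is routine.
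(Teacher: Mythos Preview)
Your proposal is correct and follows essentially the same route as the paper: the auxiliary hypergraph $\mathcal{H}$ is exactly the paper's $D(N,H)$, your co-degree estimate is the content of Lemma~\ref{deltabound}, and your weighted supersaturation step via Erd\H{o}s--Simonovits averaging is precisely Lemma~\ref{supersaturation}. The only point the paper makes explicit that you leave implicit is that ${\rm ex}_a(n,H)/\binom{n}{2}$ is non-increasing in $n$ (needed to choose the averaging parameter $k$), but you flag this as the thing to check, and the verification is indeed routine.
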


Note that in {\rm (a)}, since $I, G$ are labelled digraphs, $I \subseteq G$ means that $I$ is contained in $G$ in the labelled sense, i.e. the copy of $I$ in $G$ has the same vertex labels as $I$.

The following corollary is a straightforward consequence of Theorem~\ref{thm:contain}. We will not use it elsewhere in the paper, but we include it to illustrate what one can achieve even just with a `direct' application of Theorem~\ref{thm:contain}. 

\begin{corollary}\label{direct corollary}
For every oriented graph $H$ with $e(H)\geq 2$, we have $f(n, H) = 2^{ {\rm ex}_{\log 3}(n, H) + o(n^2) }$ and $f^*(n, H) = 2^{ {\rm ex}_{2}(n, H) + o(n^2) }$.
\end{corollary}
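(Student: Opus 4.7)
\emph{Proof proposal.} The strategy is a direct, two-step application of the container theorem (Theorem~\ref{thm:contain}), combined with the combinatorial identities $2^{e_2(G)}=4^{f_2(G)}2^{f_1(G)}$ (counting labelled subdigraphs of $G$) and $2^{e_{\log 3}(G)}=3^{f_2(G)}2^{f_1(G)}$ (counting labelled oriented subgraphs of $G$), both highlighted in the paragraph defining $e_a$. The weighting $e_a$ is in fact rigged so that ${\rm ex}_a(n,H)$ is exactly the exponent that we want to produce.

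For the upper bound on $f^*(n,H)$, I would fix $\eps>0$ and apply Theorem~\ref{thm:contain} with parameter $a=2$ to obtain a container family $\mathcal{C}$. Every $H$-free digraph $I$ on $[n]$ satisfies $I\subseteq G$ for some $G\in\mathcal{C}$, and each such $G$ has $e_2(G)=e(G)\le {\rm ex}_2(n,H)+\eps n^2$. Since $G$ has exactly $2^{e(G)}$ labelled subdigraphs, a union bound gives
\[ f^*(n,H) \;\le\; |\mathcal{C}|\cdot 2^{{\rm ex}_2(n,H)+\eps n^2} \;=\; 2^{{\rm ex}_2(n,H)+\eps n^2+o(n^2)}, \]
where $\log|\mathcal{C}|=O(n^{2-1/m(H)}\log n)=o(n^2)$ by Theorem~\ref{thm:contain}(c) (note $m(H)>0$). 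Letting $\eps\to 0$ yields $f^*(n,H)\le 2^{{\rm ex}_2(n,H)+o(n^2)}$. Exactly the same argument run with $a=\log 3$ but counting instead the $3^{f_2(G)}2^{f_1(G)}=2^{e_{\log 3}(G)}$ labelled \emph{oriented} subgraphs of each container $G$ gives $f(n,H)\le 2^{{\rm ex}_{\log 3}(n,H)+o(n^2)}$; here one uses that every $H$-free oriented graph is in particular an $H$-free digraph and hence still lies in some container.

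The matching lower bounds are immediate from the definition of ${\rm ex}_a$. If $G^*$ is an extremal $H$-free digraph attaining $e_2(G^*)={\rm ex}_2(n,H)$, then all $2^{e(G^*)}=2^{{\rm ex}_2(n,H)}$ of its labelled subdigraphs are also $H$-free, giving $f^*(n,H)\ge 2^{{\rm ex}_2(n,H)}$. Similarly, if $G^{**}$ attains $e_{\log 3}(G^{**})={\rm ex}_{\log 3}(n,H)$, then all $3^{f_2(G^{**})}2^{f_1(G^{**})}=2^{e_{\log 3}(G^{**})}=2^{{\rm ex}_{\log 3}(n,H)}$ of its labelled oriented subgraphs are $H$-free, giving $f(n,H)\ge 2^{{\rm ex}_{\log 3}(n,H)}$.

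There is no real obstacle beyond the conceptual one of recognising that $e_a$ was engineered precisely so that $2^{{\rm ex}_a(n,H)}$ matches the natural count of labelled sub(di)graphs or oriented subgraphs in each of the two settings, and that the container bound $\log|\mathcal{C}|=o(n^2)$ absorbs cleanly into the $o(n^2)$ term in the exponent. The only genuine work sits inside Theorem~\ref{thm:contain}; once that is granted, the corollary is a one-page union bound.
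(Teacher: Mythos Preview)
Your proposal is correct and follows essentially the same route as the paper's proof: the paper also obtains the lower bound as ``clearly $f(n,H)\ge 2^{{\rm ex}_{\log 3}(n,H)}$'' and the upper bound by applying Theorem~\ref{thm:contain}, bounding each container's contribution by $2^{e_{\log 3}(G)}\le 2^{{\rm ex}_{\log 3}(n,H)+\eps n^2}$, and absorbing $|\mathcal{C}|$ into the $o(n^2)$ term. The only cosmetic difference is that the paper states the argument for $f(n,H)$ and declares the $f^*(n,H)$ case ``almost identical'', whereas you spell out both.
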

\begin{proof}
We only prove the first part here; the proof of the second part is almost identical. Clearly $f(n,H)\geq 2^{{\rm ex}_{\log 3}(n, H)}$. By Theorem~\ref{thm:contain}, for every $\eps>0$ there is a collection $\C$ of digraphs on $[n]$ satisfying properties (a)--(c). We know that every digraph $G\in \C$ contains $2^{e_{\log 3}(G)}$ oriented subgraphs. Since each $H$-free oriented graph is contained in some $G\in \C$, and $|\C|\le 2^{c n^{2 - 1/m(H)} \log n}$,
\[
f(n, H) \le \sum\limits_{G\in \C} 2^{e_{\log 3}(G)} \le 2^{{\rm ex}_{\log 3}(n, H) + \eps n^2 + o(n^2)}.
\]
We are done by letting $\eps\to 0$.
\end{proof}
It is not possible to extend Theorem~\ref{thm:contain} or Corollary~\ref{direct corollary} to all digraphs $H$. For example, let $DK_3$ denote the double triangle (which consists of three vertices and all possible ordered pairs as edges). Assume that $n$ is even and let $A\cup B$ be a balanced partition of $[n]$. Let $\mathcal{G}$ denote the family of all digraphs $G$ on $[n]$ such that $G[A]$ and $G[B]$ are oriented graphs. It is clear that no $G\in \mathcal{G}$ contains a copy of $DK_3$, and $|\mathcal{G}| = 2^{n^2/2} 3^{2\binom{n/2}{2}}$ (there are 2 choices for each of the $n^2/2$ ordered pairs between $A$ and $B$ and there are $3$ choices for each of the $\binom{n/2}{2}$ unordered pairs on either $A$ or $B$). Furthermore, it is easy to see that ${\rm ex}_2(n, DK_3) = \frac{n^2}{2} + 2 \binom{n/2}{2}$ (and the digraphs in $\mathcal{G}$ with the maximum number of edges are extremal digraphs). Thus 
\[
f^*(n, DK_3)\ge |\mathcal{G}| =  2^{\frac{n^2}2} 3^{2\binom{n/2}{2}} \gg 2^{\frac{n^2}{2} + 2 \binom{n/2}{2}} =2^{ {\rm ex}_2(n, DK_3)}. 
\]

The proof of Theorem~\ref{thm:contain} is similar to that of \cite[Theorem 1.3]{SaTh}. We first define the hypergraph $D(N, H)$, which will play the role of $G$ in Theorem~\ref{Original Containers}.

\begin{definition}
Let $H$ be an oriented graph, let $r:= e(H)$ and let $N\in \mathbb{N}$. The $r$-graph $D(N, H)$ has vertex set $U=([N]\times[N])\setminus \{(i,i): i\in [N]\}$, where $B\in U^{(r)}$ is an edge whenever $B$, considered as a digraph with vertices in $[N]$, is isomorphic to $H$.
\end{definition}

We wish to apply Theorem~\ref{thm:ST} to $D(N, H)$. To do this we require an upper bound on $\delta(D(N, H), \tau)$ for some suitable value of $\tau$. We give one in the following lemma, the proof of which is identical to that of \cite[Lemma 9.2]{SaTh} and is therefore omitted here.

\begin{lemma}\label{deltabound}
Let $H$ be an oriented graph with $r:=e(H)\geq 2$, and let $\gamma \leq 1$. For $N$ sufficiently large, $\delta\left( D(N, H), \gamma^{-1}N^{-1/m(H)} \right) \leq r2^{r^2}v(H)!^2 \gamma.$
\end{lemma}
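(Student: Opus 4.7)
The plan is to unpack the definition of $\delta(D(N,H), \tau)$ by estimating $\delta_j$ separately for each $j \in \{2, \ldots, r\}$ and then summing the weighted contributions. First I will record the basic parameters of $D(N,H)$: it has $n = N(N-1)$ vertices, and each hyperedge is a labelled copy of $H$ in the complete digraph on $[N]$, so the total number of hyperedges equals $N(N-1)\cdots(N-h+1)/|\mathrm{Aut}(H)|$. Consequently $nd = r \cdot e(D(N,H))$ is at least $rN^h/(2h!)$ for large $N$, which is the lower bound I will need in the denominator of $\delta_j$.

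The core step is a counting argument to bound $d^{(j)}(v)$. Fix a directed edge $v$ on $[N]$ and a $j$-subset $\sigma \ni v$ that lies in some hyperedge; the $j$ edges of $\sigma$ then span a subdigraph $H_\sigma$ which embeds into $H$, and any copy of $H$ in $[N]$ containing $\sigma$ is determined by first choosing an embedding $V(H_\sigma) \hookrightarrow V(H)$ (at most $h!$ choices) and then freely placing the remaining $h - v(H_\sigma)$ vertices in $[N]$, giving $d(\sigma) \le h!\, N^{h - v(H_\sigma)}$. Maximising over $\sigma$ and summing over the $n \le N^2$ vertices yields
\[
\sum_v d^{(j)}(v) \;\le\; h!\, N^{h + 2 - v^*(j)},
\]
where $v^*(j) := \min\{v(H') : H' \subseteq H,\ e(H') = j\}$. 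The definition of $m(H)$ is exactly tailored to make this match $\tau^{j-1} n d$: the subgraph $H'$ realising $v^*(j)$ has $e(H') = j \ge 2$, so $m(H) \ge (j-1)/(v^*(j)-2)$, i.e.\ $v^*(j) - 2 \ge (j-1)/m(H)$. Plugging this in and dividing by $\tau^{j-1} n d$ with $\tau = \gamma^{-1} N^{-1/m(H)}$ makes all powers of $N$ cancel, leaving $\delta_j \le C\gamma^{j-1}/r$ for an explicit constant $C$ depending only on $h$. Since $0 < \gamma \le 1$ and $j-1 \ge 1$, we have $\gamma^{j-1} \le \gamma$, so the routine sum with weights $2^{-\binom{j-1}{2}}$ and the prefactor $2^{\binom{r}{2}-1}$ fits comfortably within the target $r \cdot 2^{r^2} v(H)!^2 \gamma$.

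I do not anticipate a serious obstacle; the proof mirrors that of \cite[Lemma 9.2]{SaTh} almost line for line, with the underlying graph replaced by the complete digraph, and nothing in the argument genuinely distinguishes the directed setting from the undirected one once the counting is set up correctly. The only spot requiring a little care is the bound $d(\sigma) \le h!\, N^{h - v(H_\sigma)}$ --- one must pin down what a "copy of $H$ containing $\sigma$" means in the labelled digraph setting and verify that the factor $h!$ really dominates the number of embeddings $H_\sigma \hookrightarrow H$ --- and the conceptual observation that the definition of $m(H)$ is precisely what makes the exponents in $\delta_j$ balance when $\tau$ is chosen as $\gamma^{-1} N^{-1/m(H)}$.
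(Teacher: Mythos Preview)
Your proposal is correct and follows essentially the same route as the paper's (appendix) proof: bound $d(\sigma)\le h!\,N^{h-v(\sigma)}$ by counting extensions of a partial embedding, set $f(j)=v^*(j):=\min\{v(H'):H'\subseteq H,\ e(H')=j\}$, use the definition of $m(H)$ to show the resulting exponent $2-v^*(j)+(j-1)/m(H)$ of $N$ is nonpositive, apply $\gamma^{j-1}\le\gamma$, and sum. The only cosmetic difference is that the paper bounds the denominator via the per-vertex lower bound $d(v)\ge N^{h-2}/h!$ rather than your global bound $nd\ge rN^{h}/(2h!)$, which are equivalent up to constants.
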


We now state a supersaturation result, which we will use to bound the number of edges in containers. It is the digraph analogue of the well-known supersaturation result of Erd\H{o}s and Simonovits~\cite{ErdSim}. Its proof is almost the same, and is omitted here\COMMENT{See Appendix}.

\begin{lemma}[Supersaturation]\label{supersaturation}
Let $H$ be a digraph on $h$ vertices, and let $a\in \mathbb{R}$ with $a\geq 1$. For any $\eps>0$, there exists $\d>0$ such that the following holds for all sufficiently large $n$. For any digraph $G$ on $n$ vertices, if $G$ contains at most $\d n^h$ copies of $H$, then $e_a(G)\leq {\rm ex}_a(n, H) + \eps n^2$.
\end{lemma}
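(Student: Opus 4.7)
I would prove the contrapositive: any digraph $G$ on $n$ vertices with $e_a(G) > {\rm ex}_a(n,H) + \eps n^2$ contains more than $\d n^h$ copies of $H$. The plan is to follow the classical Erd\H{o}s--Simonovits averaging strategy, with the weighted function $e_a$ playing the role of the edge count. The main thing to verify (and arguably the only mild obstacle) is that $e_a = a f_2 + f_1$ is a linear combination of sums of indicators over unordered vertex pairs, so the two averaging steps below (vertex deletion for monotonicity, and random-subset sampling for supersaturation) both behave linearly in $e_a$, just as in the undirected setting.

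First I would show that $\psi(n) := {\rm ex}_a(n,H)/\binom{n}{2}$ is non-increasing in $n$. For any $H$-free digraph $F$ on $n+1$ vertices, each unordered pair $\{u,v\}\subseteq V(F)$ contributes its weight to $e_a(F - w)$ for exactly $n-1$ of the $n+1$ choices of $w$, so $\sum_w e_a(F - w) = (n-1)\,e_a(F)$; averaging produces a vertex $w$ with $e_a(F - w) \geq \tfrac{n-1}{n+1} e_a(F)$, and since $F - w$ is still $H$-free this gives ${\rm ex}_a(n+1,H)/\binom{n+1}{2} \leq {\rm ex}_a(n,H)/\binom{n}{2}$. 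Thus $\pi_a := \lim_n \psi(n)$ exists, and I can fix $m = m(\eps, H, a)$ large enough that ${\rm ex}_a(m,H) \leq (\pi_a + \eps/10)\binom{m}{2}$.

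Now given $G$ on $n$ vertices (with $n$ large) satisfying $e_a(G) \geq {\rm ex}_a(n,H) + \eps n^2 \geq (\pi_a + \eps/3)\binom{n}{2}$, I would pick a uniformly random $m$-subset $S\subseteq V(G)$. By linearity over pairs, $\mathbb{E}[e_a(G[S])] = e_a(G)\binom{m}{2}/\binom{n}{2} \geq (\pi_a + \eps/3)\binom{m}{2}$, so $\mathbb{E}[e_a(G[S]) - {\rm ex}_a(m,H)] \geq (\eps/5)\binom{m}{2}$. Using the trivial bound $e_a(G[S]) \leq a m^2$, a Markov-type argument shows that at least an $\Omega(\eps/a)$ fraction of the $\binom{n}{m}$ subsets $S$ satisfy $e_a(G[S]) > {\rm ex}_a(m,H)$, and such $G[S]$ cannot be $H$-free. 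Finally, double-counting pairs (copy of $H$ in $G$, $m$-subset of $V(G)$ containing that copy) --- with each $h$-vertex copy lying in exactly $\binom{n-h}{m-h}$ $m$-subsets --- yields that the number of copies of $H$ in $G$ is at least $\Omega(\eps/a) \cdot \binom{n}{m}/\binom{n-h}{m-h} = \Omega(\eps\, n^h/(a m^h))$, which exceeds $\d n^h$ for a suitable choice $\d = \d(\eps, H, a)$, completing the contrapositive.
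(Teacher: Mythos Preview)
Your proposal is correct and follows essentially the same approach as the paper's own proof: both establish monotonicity of ${\rm ex}_a(n,H)/\binom{n}{2}$ by vertex-deletion averaging, fix a threshold $m$ where the density is within $O(\eps)$ of its limit, average $e_a$ over $m$-subsets to obtain an $\Omega(\eps/a)$ proportion of $m$-sets that must contain a copy of $H$, and then double-count pairs (copy of $H$, $m$-set containing it) to extract $\Omega_{\eps,a,m}(n^h)$ copies. The only differences are cosmetic (random-subset language versus explicit sums, normalisation by $\binom{n}{2}$ versus $n(n-1)$, and slightly different constants).
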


We may now apply Theorem~\ref{thm:ST} to $D(N, H)$, using Lemmas~\ref{deltabound} and~\ref{supersaturation}, to obtain Theorem~\ref{thm:contain}. The details of this are identical to the proof of Theorem $1.3$ in \cite{SaTh} and are omitted here\COMMENT{The proof is simply to apply Theorem~\ref{thm:ST} to the hypergraph $D(N, H)$ (using lemma~\ref{deltabound} to show that the assumptions of Theorem~\ref{thm:ST} are satisfied) and check that the containers generated satisfy the conditions of Theorem~\ref{thm:contain} (using Lemma~\ref{supersaturation})}.

\section{Rough structure of typical $T_{k+1}$-free oriented graphs and digraphs}\label{Section: Rough structure of $T_k$-free oriented graphs and digraphs}

In this section we prove a stability result for $T_{k+1}$-free digraphs. We apply this (together with Theorem~\ref{thm:contain}) at the end of this section to determine the `rough' structure of typical $T_{k+1}$-free oriented graphs and digraphs.

The \emph{Tur\'an graph} $\Tu_k(n)$ is the largest complete $k$-partite graph on $n$ vertices (thus each vertex class has $\lfloor n/k \rfloor$ or $\lceil n/k \rceil$ vertices). Let $t_k(n) := e(\Tu_k(n))$. Let $DT_k(n)$ be the digraph obtained from $\Tu_k(n)$ by replacing each edge of $\Tu_k(n)$ by two edges of opposite directions. Obviously, for all $k\in \mathbb{N}$, $DT_{k}(n)$ is $T_{k+1}$-free so ${\rm ex}_a(n, T_{k+1}) \ge e_a(DT_{k}(n)) =a \cdot t_{k}(n)$. In Lemma~\ref{lem:turan} below we show that $DT_{k}(n)$ is the unique extremal digraph for $T_{k+1}$. This result is not needed for any of our proofs, but we believe it is of independent interest, in addition to being useful for illustrating the general proof method of Lemma~\ref{lem:sta}. Note that the case $a=2$ of Lemma~\ref{lem:turan} is already due to Brown and Harary~\cite{BrHa}.
\begin{lemma}\label{lem:turan}
Let $a\in \mathbb{R}$ with $3/2< a\leq 2$ and let $k,n\in \mathbb{N}$. Then ${\rm ex}_a(n, T_{k+1}) = a \cdot t_{k}(n)$, and $DT_{k}(n)$ is the unique extremal $T_{k+1}$-free digraph on $n$ vertices.
\end{lemma}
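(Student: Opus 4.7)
The plan is to argue by induction on $k$. The base case $k=1$ is immediate, since a $T_2$-free digraph has no edges and $t_1(n)=0$. For the inductive step, fix a $T_{k+1}$-free digraph $G$ on $n$ vertices with $e_a(G)$ maximal, and write $d_a(v):=a|N^+(v)\cap N^-(v)|+|N^+(v)\triangle N^-(v)|$ for the weighted contribution of $v$, so that $\sum_v d_a(v)=2e_a(G)$.

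\medskip

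The first step is a Zykov-style symmetrization. If $u,w\in V(G)$ satisfy $uw,wu\notin E(G)$ and $d_a(u)\le d_a(w)$, then the digraph $G'$ obtained from $G$ by replacing the in- and out-neighbourhoods of $u$ with those of $w$ is still $T_{k+1}$-free --- indeed, any new $T_{k+1}$ would have to contain both $u$ and $w$ (now twins), but these are non-adjacent and so cannot both sit inside a tournament --- and satisfies $e_a(G')\ge e_a(G)$. Iterating until no further symmetrizations apply, we may assume that ``non-adjacency'' in $G$ is an equivalence relation, whose classes $V_1,\dots,V_t$ are twin-classes; equivalently, $G$ is the blow-up of a semi-complete digraph $H$ on $t$ vertices with part sizes $n_1,\dots,n_t$.

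\medskip

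Let $D(H)$ denote the undirected graph whose edges are the class-pairs of $H$ joined by a double edge. The pivotal structural observation is that if $D(H)$ contains a copy of $K_k$ on classes $\{i_1,\dots,i_k\}$ and $t>k$, then $H$ contains $T_{k+1}$: for any extra class $j$, each pair $\{j,i_l\}$ is covered in $H$ (either single in some direction, or double), and using the freedom afforded by the $\binom{k}{2}$ doubled edges among $i_1,\dots,i_k$ one can order them transitively in a way compatible with the edges from $j$, then place one vertex of $V_j$ in the appropriate position to produce a copy of $T_{k+1}$. Consequently, either (a) $t=k$ and $H=DT_k(k)$ (with every pair doubled), in which case $e_a(G)=a\sum_{i<j}n_in_j\le a\,t_k(n)$ with equality uniquely at balanced sizes (giving $G=DT_k(n)$, as desired); or (b) $D(H)$ is $K_k$-free.

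\medskip

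The remaining task is to rule out case (b), and this is where the hypothesis $a>3/2$ enters. Writing $e_a(G)=(a-1)f_2(G)+(f_1(G)+f_2(G))$ and applying Tur\'an's theorem to the blow-up of $D(H)$ gives $f_2(G)\le t_{k-1}(n)$. For $k=2$, $D(H)$ being $K_2$-free forces $H$ to be a tournament, and a short argument (using that $|N^+(v)|\le 1$ for every vertex in a $T_3$-free tournament) shows any $T_3$-free tournament has at most three vertices, reducing the comparison to $DT_2(n)$ against the balanced $C_3$-blow-up ($e_a=n^2/3$ versus $an^2/4$), which gives the threshold $a>4/3$. For $k\ge 3$ one has to run through the possible semi-complete $T_{k+1}$-free structures for $H$ (for instance, a doubled star $K_{1,k}$ together with a $T_k$-free orientation of its leaves); in each such case, a direct calculation of the $e_a$ of the balanced blow-up, compared with $a\,t_k(n)$, produces a threshold on $a$ that is at most $3/2$. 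The main obstacle is executing this case analysis cleanly and uniformly in $k$: a natural route is to phrase it as a weighted Motzkin--Straus problem on the simplex $\{(n_i/n):\sum n_i=n\}$ with weights in $\{1,a\}$, exploiting the inductive hypothesis applied to the $T_k$-free neighbourhoods $G[N^+(v)]$ and $G[N^-(v)]$ to pin down the structure of the extremal $H$ in case (b).
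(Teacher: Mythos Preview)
Your symmetrization set-up and the reduction to a semi-complete $T_{k+1}$-free ``pattern'' $H$ are correct, and your structural observation (that a $K_k$ in $D(H)$ together with any extra class forces $T_{k+1}$) is valid. The case $k=2$ is also handled correctly. However, for $k\ge 3$ your argument is genuinely incomplete, and you say so yourself: ``The main obstacle is executing this case analysis cleanly and uniformly in $k$.'' The bound $e_a(G)=(a-1)f_2(G)+(f_1(G)+f_2(G))$ with $f_2(G)\le t_{k-1}(n)$ is not strong enough on its own, since $f_1(G)+f_2(G)$ can be as large as $\binom{n}{2}$, and the resulting estimate does \emph{not} beat $a\,t_k(n)$. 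So one really does have to control the full weighted quadratic form $\sum_{i<j}w_{ij}n_in_j$ with $w_{ij}\in\{1,a\}$ subject to the $T_{k+1}$-freeness of $H$, and you have not carried this out. (Incidentally, your ``doubled star with $k$ leaves and a $T_k$-free tournament on the leaves'' cannot occur: by R\'edei's theorem every tournament on $k$ vertices contains $T_k$, so that example does not exist for $k\ge 2$.)

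The paper avoids the case analysis entirely by using an Erd\H{o}s-type degree argument rather than Zykov symmetrization. One picks $x$ with $d^+(x)=\Delta^0(G)$, sets $S:=N^+(x)$ and $T:=V\setminus S$, and applies the induction hypothesis to the $T_k$-free digraph $G[S]$. The key step is then a local replacement: for any edge $yz$ inside $T$, maximality of $d^+(x)$ guarantees $y',z'\in S$ with $yy',z'z\notin E$, and swapping $yz$ for $yy'$ and $z'z$ changes $e_a$ by at least $2(a-1)-1=2a-3>0$. Iterating removes all edges inside $T$ and produces a $k$-partite digraph with strictly larger $e_a$ whenever $G$ was not already $k$-partite. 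This gives both the extremal value and uniqueness in one stroke, with no structural classification of semi-complete $T_{k+1}$-free digraphs required. If you want to salvage the Zykov route, the cleanest way is probably to combine it with exactly this local replacement inside the blow-up, rather than attempting the Motzkin--Straus optimisation directly.
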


\begin{proof}
Note that $DT_{k}(n)$ is the unique $k$-partite digraph $D$ on $n$ vertices which maximises $e_a(D)$. Moreover, $e_a(DT_{k}(n)) =  a\cdot t_{k}(n)$. Thus, it suffices to show that for every non-$k$-partite $T_{k+1}$-free digraph $G$ on $n$ vertices, there exists a $k$-partite digraph $H$ on the same vertex set such that $e_a(G) < e_a(H)$.

We prove this by induction on $k$. This is trivial for the base case $k=1$, as the only $T_2$-free digraph is the empty graph. Suppose that $k>1$ and that the claim holds for $k-1$. Let $G=(V,E)$ be a non-$k$-partite $T_{k+1}$-free digraph on $n$ vertices. Without loss of generality, suppose that $d^+(x)= \Delta^0(G)$ for some vertex $x\in V$. Let $S:= N^+(x)$ and $T:= V\setminus S$. Since $G$ is $T_{k+1}$-free we have that $G[S]$ is $T_{k}$-free. By induction hypothesis, either {\rm (i)} there is a $(k-1)$-partite digraph $H'$ on $S$ such that $e_a(G[S]) < e_a(H')$, or {\rm (ii)} $G[S]$ is $(k-1)$-partite and hence there is trivially a $(k-1)$-partite digraph $H'$ on $S$ such that $e_a(G[S]) = e_a(H')$. Next we want to replace all the edges inside $T$ with edges between $S$ and $T$ as follows. Suppose $y, z\in T$ with $yz\in E$. Then there are $y', z' \in S$ with $y y'\not\in E$ and $z' z\not\in E$ otherwise $d^+(y)\ge |S| + 1$ or $d^-(z)\ge |S| + 1$, contradicting the assumption $ \Delta^0(G)= d^+(x) = |S|$. We now replace $yz$ with $y y'$ and $z' z$. By the definition of $e_a(\cdot)$, and since $a\leq 2$, the gain of adding the edges $y y'$ and $z' z$ is at least $2(a - 1)$, while the loss of removing $yz$ is at most one. Thus, since $a>3/2$, we have that $e_a(G)$ increases by $2a - 3 > 0$. Note that this procedure does not change the in-degree or out-degree of any vertex in $T$. We repeat this for every edge inside $T$. At the end we obtain a digraph $G'$ with no edge inside $T$. We replace $G'[S] = G[S]$ with the $(k-1)$-partite digraph $H'$ obtained before to obtain a $k$-partite digraph $H$. We now consider the two cases {\rm (i)} and {\rm (ii)} discussed previously. If $e_a(G[S]) < e_a(H')$ then it is clear that $e_a(G) < e_a(H)$, and we are done. Otherwise, $G[S]$ is $(k-1)$-partite. In this case, since $G$ is not $k$-partite, there must be an edge inside $T$, so that there exists $y,z$ as above. Hence $e_a(G)< e_a(G')$, and so clearly $e_a(G) < e_a(H)$ in this case too, as required.
\end{proof}

We now prove a stability version of Lemma~\ref{lem:turan}. The proof idea builds on that of Lemma~\ref{lem:turan}. The proof will also make use of the following proposition, which can be proved by a simple but tedious calculation, which we omit here\COMMENT{See Appendix}.

\begin{proposition}\label{omitted proposition}
Let $k, n\in \mathbb{N}$ with $n\geq k\geq 2$ and let $s >0$. Suppose $G$ is a $k$-partite graph on $n$ vertices in which some vertex class $A$ satisfies $|A-n/k|\geq s$. Then
$$e(G)\leq t_k(n)-s\left( \frac{s}{2}-k\right).$$
\end{proposition}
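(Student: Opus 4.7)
The plan is to reduce immediately to the case of a complete $k$-partite graph, since adding edges inside the prescribed partition only increases $e(G)$. Writing the vertex class sizes as $a_1,\dots,a_k$ with $a_1=|A|$, the standard identity
\[
e(G) \;\le\; \sum_{i<j} a_i a_j \;=\; \tfrac{1}{2}\Bigl(n^2 - \sum_{i=1}^k a_i^2\Bigr)
\]
applies, and the same identity holds for $t_k(n)$ with balanced part sizes $b_i \in \{\lfloor n/k \rfloor, \lceil n/k \rceil\}$. So the problem is reduced to a lower bound for $\sum_i a_i^2 - \sum_i b_i^2$ in terms of $s$ and $k$.

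To obtain the lower bound, I would set $x_i := a_i - n/k$, so that $\sum_i x_i = 0$ and $|x_1| \ge s$. Then $\sum_i a_i^2 = n^2/k + \sum_i x_i^2$, and applying Cauchy--Schwarz to $x_2,\dots,x_k$ together with $\sum_{i\ge 2} x_i = -x_1$ yields $\sum_{i\ge 2} x_i^2 \ge x_1^2/(k-1)$, hence
\[
\sum_{i=1}^k x_i^2 \;\ge\; \frac{k}{k-1}\,x_1^2 \;\ge\; s^2.
\]
A direct calculation gives $\sum_i b_i^2 = n^2/k + r(k-r)/k$, where $n = kq + r$ with $0 \le r < k$, and $r(k-r)/k \le k/4$. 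Combining these, $\sum_i a_i^2 - \sum_i b_i^2 \ge s^2 - k/4$, so $t_k(n) - e(G) \ge s^2/2 - k/8$.

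To conclude, I would compare this with the target $s(s/2-k) = s^2/2 - sk$. When $s \ge 1/8$ the inequality $sk \ge k/8$ is immediate, so $s^2/2 - k/8 \ge s^2/2 - sk$ as required. When $s < 1/8$, the quantity $s(s/2-k)$ is non-positive for any $k \ge 2$, so the stated bound follows at once from the trivial $e(G) \le t_k(n)$. The only real obstacle is the bookkeeping of the integer rounding of $n/k$, which produces the $r(k-r)/k$ correction on the Tur\'an side and accounts for the $sk$ slack term in the statement rather than a tighter constant-in-$k$ error; everything else is a one-line Cauchy--Schwarz computation once the reduction to the complete multipartite case is made.
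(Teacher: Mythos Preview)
Your argument is correct and in fact cleaner than the paper's. The paper also reduces to the complete $k$-partite case but then proceeds combinatorially: it further assumes the remaining classes are balanced and $|A|$ is exactly $\lceil n/k+s\rceil$ or $\lfloor n/k-s\rfloor$, and then physically moves $\lceil s\rceil$ vertices between $A$ and the other classes, counting edges gained and lost in two separate cases according to the sign of $|A|-n/k$. Your route replaces all of this by the single identity $e(G)=\tfrac12(n^2-\sum a_i^2)$ together with a one-line Cauchy--Schwarz bound $\sum_i x_i^2\ge \tfrac{k}{k-1}x_1^2\ge s^2$ for the centred part sizes, plus the elementary $\sum b_i^2 - n^2/k = r(k-r)/k\le k/4$ for the Tur\'an side. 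What the paper's approach buys is that it never writes down the quadratic identity and is entirely combinatorial; what your approach buys is that it avoids the case split and the floor/ceiling bookkeeping, and makes transparent exactly where the slack $sk$ in the statement comes from (absorbing the $k/8$ rounding error and the small-$s$ regime). Both arrive at the same bound.
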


\begin{lemma}[Stability]\label{lem:sta}
Let $a\in \mathbb{R}$ with $3/2< a\leq 2$ and let $k\in \mathbb{N}$. For any $\beta > 0$ there exists $\g>0$ such that the following holds for all sufficiently large $n$. If a digraph $G$ on $n$ vertices is $T_{k+1}$-free, and $e_a(G) \ge {\rm ex}_a(n, T_{k+1}) - \g n^2$, then $G= DT_{k}(n) \pm \beta n^2$.
\end{lemma}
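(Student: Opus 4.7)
The proof goes by induction on $k$; the case $k=1$ is trivial since the only $T_2$-free digraph is empty. For the inductive step, I follow the architecture of Lemma~\ref{lem:turan}. Assuming WLOG that $\Delta^0(G) = d^+(x)$ for some $x \in V(G)$ (the in-degree case is symmetric), set $S := N^+(x)$, $T := V(G) \setminus S$, and $u := |S| - (k-1)n/k$. Recall that $G[S]$ is $T_k$-free and $d^+(y), d^-(y) \leq |S|$ for every $y \in T$.

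The crucial step is to run the edge-replacement procedure from Lemma~\ref{lem:turan} on the directed edges of $G[T]$, but with the replacement $(k-1)$-partite digraph on $S$ chosen to be the \emph{extremal} $H' := DT_{k-1}(|S|)$ (rather than the inductively-built one of Lemma~\ref{lem:turan}). Processing the edges of $G[T]$ one by one preserves $d^+(y)$ and $d^-(y)$ of every $y \in T$ (each replacement deletes one $T$-edge at $y$ and creates one $S$-edge at $y$), so the vertices $y', z' \in S$ with $yy', z'z \notin E$ remain available throughout, and each replacement contributes a gain of at least $2a-3 > 0$ to $e_a$. The resulting $k$-partite digraph $H$, with partition $(A_1, \ldots, A_{k-1}, T)$ where $|A_i| = |S|/(k-1)$, therefore satisfies
\[
e_a(H) \geq e_a(G) + (2a-3)\, e_2(G[T]) + \bigl(at_{k-1}(|S|) - e_a(G[S])\bigr),
\]
while a direct calculation gives $e_a(H) \leq a\,[t_{k-1}(|S|) + |S||T|] = at_k(n) - \tfrac{ak}{2(k-1)}u^2 + O(n)$. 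Combining these with the hypothesis $e_a(G) \geq at_k(n) - \gamma n^2$ produces the master inequality
\[
\tfrac{ak}{2(k-1)}\,u^2 + (2a-3)\, e_2(G[T]) + \bigl(at_{k-1}(|S|) - e_a(G[S])\bigr) \leq \gamma n^2 + O(n).
\]

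Since each summand on the left is non-negative (the last because $G[S]$ is $T_k$-free), I extract three bounds: $|u| = O_{a,k}(\sqrt{\gamma})\,n$; $e_2(G[T]) = O_{a,k}(\gamma)\,n^2$; and $e_a(G[S]) \geq at_{k-1}(|S|) - O_k(\gamma)\,|S|^2$. Choosing $\beta_1$ small in terms of $\beta$, and then $\gamma$ small enough relative to $\beta_1$ and the $\gamma_1(\beta_1)$ furnished by the induction, the inductive hypothesis applied to $G[S]$ yields $G[S] = DT_{k-1}(|S|) \pm \beta_1 |S|^2$. This supplies a $(k-1)$-partition of $S$ into nearly balanced parts, which together with $V_k := T$ gives a $k$-partition of $V(G)$ with parts of size $n/k \pm (\beta_1 + C\sqrt{\gamma})\,n$. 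Substituting the three bounds into $e_a(G) = e_a(G[S]) + e_a(G[T]) + e_a(S,T)$ also yields $e_a(S,T) \geq a|S||T| - O_{a,k}(\gamma)\,n^2$, forcing at most $O_{a,k}(\gamma)\,n^2$ non-bidirectional cross-pairs between $S$ and $T$. Rebalancing the partition to match the exact part sizes of $DT_k(n)$ incurs $O((\sqrt{\gamma} + \beta_1)\,n^2)$ further edits, giving $G = DT_k(n) \pm O_{a,k}(\sqrt{\gamma} + \beta_1)\,n^2$, which is at most $\beta n^2$ for the chosen parameters.

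The main obstacle is the choice of $H'$ in the transformation. The transformation of Lemma~\ref{lem:turan} itself only supplies $e_a(H') - e_a(G[S]) \leq \gamma n^2$ for some $(k-1)$-partite $H'$, which does not suffice to apply the inductive stability hypothesis to $G[S]$. Taking instead the extremal $H' := DT_{k-1}(|S|)$, so that $e_a(H') = at_{k-1}(|S|)$, is precisely what converts the master inequality into the sharp lower bound $e_a(G[S]) \geq at_{k-1}(|S|) - O_k(\gamma)|S|^2$ that feeds into the induction.
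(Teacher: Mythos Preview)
Your proof is correct and follows essentially the same approach as the paper: induct on $k$, pick a vertex $x$ of maximum semidegree, set $S=N^+(x)$ and $T=V\setminus S$, run the edge-replacement procedure of Lemma~\ref{lem:turan} on $G[T]$, compare the resulting $k$-partite digraph to $DT_k(n)$, and extract from the single master inequality the three deficits (imbalance $|u|$, $T$-edges, and ${\rm ex}_a(|S|,T_k)-e_a(G[S])$) needed to apply the inductive hypothesis to $G[S]$. The only cosmetic difference is that the paper tracks the number $m_2$ of missing $S$--$T$ edges explicitly through the chain (via the term $(m_2-2m_1)(a-1)$), whereas you recover the same bound afterwards from $e_a(S,T)\ge a|S||T|-O_{a,k}(\gamma)n^2$; both routes give the required $O(\gamma)n^2$ control on non-bidirectional cross-pairs.
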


\begin{proof}
Choose $\g$ and $n_0$ such that $1/n_0 \ll \g \ll \beta$, and consider any $n\geq n_0$. We follow the proof of Lemma~\ref{lem:turan}, in which we fix a vertex $x\in V$
with $d^+(x)= \Delta^0(G)$, and let $S:= N^+(x)$ and $T:= V\setminus S$, and proceed by induction on $k$. Again, the base case $k=1$ is trivial as the only $T_2$-free digraph is the empty graph.
Let $m_1$ be the number of edges of $G[T]$, and let $m_2$ be the number of non-edges between $T$ and $S$ (in $G$).

Let $G'$ be the digraph obtained from $G$ by replacing each edge inside $T$ with two edges between $T$ and $S$ as in the proof of Lemma~\ref{lem:turan}. Then
\begin{equation}\label{stackrel equation 1}
e_a(G) \le e_a(G') - (2a- 3) m_1.
\end{equation}
Since there are $m_2 - 2m_1$ non-edges between $T$ and $S$ in $G'$, and adding any one of them would increase $e_a(G')$ by at least $a-1$ (since $a\leq 2$), we have that
\begin{equation}\label{stackrel equation 2}
e_a(G') \le |T| |S| a + e_a(G[S]) - (m_2 - 2 m_1)(a-1).
\end{equation}
Let
\begin{equation}\label{stackrel equation 3}
m_3 := {\rm ex}_a(|S|, T_{k}) - e_a(G[S]).
\end{equation}
Then $m_3\geq 0$ because $G[S]$ is $T_{k}$-free.

Let $H$ be the $k$-partite digraph obtained from $DT_{k-1}(|S|)$ (on $S$) by adding the vertex set $T$ together with all the edges (in both directions) between $S$ and $T$. Then
\begin{equation}\label{stackrel equation 4}
e_a(H) = |T| |S| a + {\rm ex}_a(|S|, T_{k}).
\end{equation}
Altogether, this gives that
\begin{align*}
e_a(G) &\stackrel{(\ref{stackrel equation 1})}{\leq} e_a(G')-(2a-3)m_1 \stackrel{(\ref{stackrel equation 2})}{\leq} |T| |S| a + e_a(G[S]) - (m_2 - 2 m_1)(a-1) - (2a-3)m_1\\
&\stackrel{(\ref{stackrel equation 3})}{=} |T| |S| a + {\rm ex}_a(|S|, T_{k}) - m_3 - (m_2 - 2 m_1)(a-1) - (2a-3)m_1\\
&\stackrel{(\ref{stackrel equation 4})}{=} e_a(H)- m_3 - (m_2 - 2 m_1)(a-1) - (2a-3)m_1.
\end{align*}
Let $s:=| |T| - \frac{n}{k}|$. By Proposition~\ref{omitted proposition} we have that if $s\geq 4k$ then ${\rm ex}_a(n, T_{k+1}) \ge a\cdot t_{k}(n) \geq e_a(H) + as^2 /4$. So if $s\geq 4k$ then
$$e_a(G) \leq {\rm ex}_a(n, T_{k+1}) - \frac{as^2}4 - m_3 - (m_2 - 2 m_1)(a-1) - (2a-3)m_1,$$
and if $s< 4k$ then
$$e_a(G) \leq {\rm ex}_a(n, T_{k+1}) - m_3 - (m_2 - 2 m_1)(a-1) - (2a-3)m_1.$$
In either case, since ${\rm ex}_a(n, T_{k+1}) - \g n^2 \le e_a(G)$ by assumption, we have that\COMMENT{The bound on $m_1$ follows since $\gamma n^2 \geq m_1(2a-3)$, and the bound on $m_2$ follows since $\gamma n^2 \geq (m_2 - 2m_1)(a-1)\geq (m_2-((2\gamma n^2)/(2a-3)))(a-1)$, where in the last inequality we use the bound on $m_1$ just found.} $m_1 \le \frac{\g}{2a-3} n^2$, $m_2\le (\frac{\g}{a-1} + \frac{2\g}{2a-3}) n^2$, $m_3 \le \g n^2$, and $s^2 \le 4 \g n^2/a$.

Recall that $e_a(G[S]) = {\rm ex}_a(|S|, T_{k}) - m_3$. Hence we have by induction hypothesis that, since $\g \ll \beta$ and $|S|= \Delta^0(G)$ is sufficiently large, $G[S] = DT_{k-1}(|S|) \pm (\b/2) |S|^2$. Note that we can obtain the digraph $DT_{k}(n)$ from $G$ by removing $m_1$ edges inside $T$, adding $m_2$ edges between $T$ and $S$, changing at most $(\b/2) n^2$ edges inside $S$, and changing the adjacency of at most $s$ vertices. Thus
\[
G= DT_{k}(n) \pm (m_1 + m_2 + (\b/2) n^2 + 2sn),
\]
and so we have that $G= DT_{k}(n) \pm \beta n^2,$ as required.
\end{proof}

We also need the Digraph Removal Lemma of Alon and Shapira \cite{AlSh}.
\begin{lemma}[Removal Lemma]
\label{lem:rem}
For any fixed digraph $H$ on $h$ vertices, and any $\g > 0$ there exists $\eps'>0$ such that the following holds for all sufficiently large $n$. If a digraph $G$ on $n$ vertices contains at most $\eps' n^h$ copies of $H$, then $G$ can be made $H$-free by deleting at most $\g n^2$ edges.
\end{lemma}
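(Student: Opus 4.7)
The plan is to mimic the standard regularity-plus-counting proof of the undirected Ruzsa--Szemer\'edi removal lemma, adapted to digraphs. First I would apply a Szemer\'edi-type regularity lemma for digraphs (which one can either invoke directly from the literature or derive from the classical version by splitting the edge set into the two possible orientations and treating them as an edge-coloured graph) to obtain, for a small parameter $\eps>0$, an equitable partition $V_1 \cup \dots \cup V_M$ of $V(G)$ with $M=M(\eps)$ bounded in terms of $\eps$ only, such that all but at most $\eps\binom{M}{2}$ of the \emph{ordered} pairs $(V_i,V_j)$ are $\eps$-regular with respect to the edges directed from $V_i$ to $V_j$.

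Second, construct a cleaned subdigraph $G' \subseteq G$ by deleting every edge that lies (a) inside some class $V_i$, (b) in an ordered pair that is not $\eps$-regular, or (c) in an ordered pair whose directed density is below a small threshold $\eta$. A routine estimate shows that the number of deleted edges is at most $(2/M + 2\eps + 2\eta)n^2$, which is at most $\g n^2$ provided $\eps, \eta, 1/M$ are small enough in terms of $\g$.

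Third, I would claim that $G'$ is $H$-free for an appropriate choice of $\eps'$. If $G'$ contained a copy of $H$ with vertices $x_1,\dots,x_h$ and $x_\ell \in V_{i_\ell}$, then for every edge $x_\ell x_m$ of $H$ the ordered pair $(V_{i_\ell}, V_{i_m})$ would be $\eps$-regular with directed density at least $\eta$ (and the parts $V_{i_\ell}$ would be pairwise distinct since no edges remain inside any class). Applying a digraph counting lemma, which one proves by embedding the vertices of $H$ into the $V_{i_\ell}$ one at a time and using regularity at each step to guarantee enough suitably-directed neighbours among the remaining candidate sets, one obtains at least
\[
\bigl(\eta/2\bigr)^{e(H)} \prod_{\ell=1}^{h}|V_{i_\ell}| \;\geq\; c\, n^h
\]
copies of $H$ in $G$ for some constant $c=c(\eps,\eta,M,H)>0$. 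Choosing $\eps' < c$ contradicts the assumption that $G$ has at most $\eps' n^h$ copies of $H$, so $G'$ is $H$-free, as required.

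The main obstacle is the digraph counting lemma when $H$ has edges in both directions between some pair of its vertices: one must check that $\eps$-regularity of the ordered pair $(V_{i_\ell},V_{i_m})$ in both directions implies that all but an $O(\eps)$-fraction of vertices in $V_{i_\ell}$ simultaneously have many out-neighbours \emph{and} many in-neighbours of the correct density in $V_{i_m}$. Once this is verified, the inductive embedding of $H$ proceeds exactly as in the undirected case.
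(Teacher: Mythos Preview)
The paper does not prove this lemma at all: it is simply quoted as the Digraph Removal Lemma of Alon and Shapira~\cite{AlSh}. Your regularity-plus-counting sketch is precisely the approach Alon and Shapira take, so your proposal is correct and aligned with the cited source.

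One small inaccuracy worth noting: your assertion that ``the parts $V_{i_\ell}$ would be pairwise distinct since no edges remain inside any class'' is only guaranteed for pairs $x_\ell, x_m$ that are adjacent in $H$; non-adjacent vertices of $H$ may land in the same class. This is not a genuine gap --- the standard fix is to randomly split each $V_i$ into $h$ equal subparts (regularity is inherited), or to use a counting lemma that permits repeated clusters --- but the sentence as written is not quite right.
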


We are now ready to combine Theorem~\ref{thm:contain} with Lemma~\ref{lem:sta} to show that almost all $T_{k+1}$-free oriented graphs and almost all $T_{k+1}$-free digraphs are almost $k$-partite.

\begin{lemma}\label{step 0}
For every $k\in \mathbb{N}$ with $k\geq 2$ and any $\a > 0$ there exists $\eps>0$ such that the following holds for all sufficiently large $n$.
\begin{enumerate}[{\rm (i)}]
\item All but at most $ f(n, T_{k+1}) 2^{-\eps n^2}$ $T_{k+1}$-free oriented graphs on $n$ vertices can be made $k$-partite by changing at most $\a n^2$ edges.
\item All but at most $ f^*(n, T_{k+1}) 2^{-\eps n^2}$ $T_{k+1}$-free digraphs on $n$ vertices can be made $k$-partite by changing at most $\a n^2$ edges.
\end{enumerate}
\end{lemma}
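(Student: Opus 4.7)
The plan is to apply Theorem~\ref{thm:contain} with forbidden subdigraph $H = T_{k+1}$, taking $a = \log 3$ for part~{\rm (i)} and $a=2$ for part~{\rm (ii)}; note that both values lie in $(3/2, 2]$, as required by Lemma~\ref{lem:sta}. Choosing constants $1/n \ll \eps' \ll \gamma_1 \ll \gamma \ll \beta \ll \alpha$ (with $\eps'$ playing the role of the `$\eps$' in Theorem~\ref{thm:contain}), this yields a collection $\mathcal{C}$ of digraphs on $[n]$ such that every $T_{k+1}$-free digraph on $[n]$ is a subgraph of some $G \in \mathcal{C}$, each $G\in\mathcal{C}$ contains at most $\eps' n^{k+1}$ copies of $T_{k+1}$ and satisfies $e_a(G) \le {\rm ex}_a(n, T_{k+1}) + \eps' n^2$, and $\log|\mathcal{C}| \le c n^{2 - 1/m(T_{k+1})} \log n = o(n^2)$. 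The crucial counting input is that each container $G$ has exactly $2^{e_{\log 3}(G)}$ labelled oriented subgraphs and $2^{e_2(G)}$ labelled digraph subgraphs; summing these over $\mathcal{C}$ gives upper bounds on $f(n, T_{k+1})$ and $f^*(n, T_{k+1})$.

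Next, split $\mathcal{C}$ into the \emph{bad} containers, with $e_a(G) < {\rm ex}_a(n, T_{k+1}) - \gamma n^2$, and the \emph{good} containers. The total number of $T_{k+1}$-free oriented graphs (resp.\ digraphs) inside bad containers is at most $|\mathcal{C}| \cdot 2^{{\rm ex}_a(n,T_{k+1}) - \gamma n^2}$. On the other hand, $DT_k(n)$ is $T_{k+1}$-free with $e_a(DT_k(n)) = {\rm ex}_a(n, T_{k+1})$ by Lemma~\ref{lem:turan}, and all its labelled oriented (resp.\ digraph) subgraphs are $T_{k+1}$-free, so $f(n, T_{k+1}) \ge 2^{{\rm ex}_{\log 3}(n, T_{k+1})}$ and $f^*(n, T_{k+1}) \ge 2^{{\rm ex}_2(n, T_{k+1})}$. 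Since $\log|\mathcal{C}| = o(n^2)$, the bad-container contribution is at most $f(n, T_{k+1}) \cdot 2^{-\eps n^2}$ (resp.\ $f^*(n, T_{k+1}) \cdot 2^{-\eps n^2}$) for $\eps := \gamma/2$ and large enough $n$, giving the desired exponential factor.

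The remaining task is to show that every subgraph of a good container is $\alpha$-close to $k$-partite. Fix a good container $G$. Applying the Removal Lemma~\ref{lem:rem} with tolerance $\gamma_1$ (chosen so that $\eps' \ll \gamma_1$) yields a $T_{k+1}$-free subgraph $G' \subseteq G$ with $e(G) - e(G') \le \gamma_1 n^2$; since deleting any one edge decreases $e_a$ by at most $1$, one has $e_a(G') \ge e_a(G) - \gamma_1 n^2 \ge {\rm ex}_a(n, T_{k+1}) - (\gamma + \gamma_1)n^2$. The stability Lemma~\ref{lem:sta} (with parameter $\beta$, using $\gamma + \gamma_1 \ll \beta$) now supplies a labelled copy $H$ of $DT_k(n)$ on $[n]$ with $G' = H \pm \beta n^2$, whence $G = H \pm (\beta+\gamma_1)n^2$. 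The canonical $k$-partition of $H$ then witnesses that every subgraph $I \subseteq G$ (and in particular every $T_{k+1}$-free $I$) can be made $k$-partite by deleting at most $(\beta+\gamma_1)n^2 \le \alpha n^2$ edges, namely those edges of $I$ lying inside a part of this partition.

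The main obstacle is calibrating the hierarchy $\eps' \ll \gamma_1 \ll \gamma \ll \beta \ll \alpha$ so that the stability conclusion is fine enough to imply the target $\alpha$-closeness while the bad-container count still loses the required factor of $2^{-\eps n^2}$; once Theorem~\ref{thm:contain}, Lemma~\ref{lem:rem}, and Lemma~\ref{lem:sta} are in hand, the remainder is just bookkeeping of these parameters.
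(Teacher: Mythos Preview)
Your proof is correct and follows essentially the same route as the paper: apply Theorem~\ref{thm:contain} to obtain containers, discard the ``bad'' ones with small weighted edge count (these account for at most $f(n,T_{k+1})2^{-\eps n^2}$ graphs via the lower bound $f(n,T_{k+1})\ge 2^{{\rm ex}_{\log 3}(n,T_{k+1})}$), and use the Removal Lemma followed by the stability Lemma~\ref{lem:sta} to show each ``good'' container---hence each of its subgraphs---is $\alpha$-close to $k$-partite. The only cosmetic difference is that the paper uses a single parameter where you use both $\gamma$ and $\gamma_1$, and it sets the good/bad threshold at $\eps' n^2$ rather than $\gamma n^2$; the logic is identical.
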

\begin{proof}
We only prove {\rm (i)} here; the proof of {\rm (ii)} is almost identical. Let $a:=\log 3$. Choose $n_0\in \mathbb{N}$ and $\varepsilon, \gamma, \beta >0$ such that $1/n_0 \ll \eps \ll \g \ll \b \ll \a, 1/k$. Let $\varepsilon':= 2\varepsilon$ and $n\geq n_0$. By Theorem~\ref{thm:contain} (with $T_{k+1}, n$ and $\varepsilon'$ taking the roles of $H, N$ and $\varepsilon$ respectively) there is a collection $\C$ of digraphs on vertex set $[n]$ satisfying properties (a)--(c). In particular, by (a), every $T_{k+1}$-free oriented graph on vertex set $[n]$ is contained in some digraph $G\in \C$. Let $\C_1$ be the family of all those $G\in \C$ for which $e_{\log 3}(G)\ge {\rm ex}_{\log 3}(n, T_{k+1}) - \eps' n^2$. Then the number of (labelled) $T_{k+1}$-free oriented graphs not contained in some $G\in \C_1$ is at most
\[
|\C| \, 2^{{\rm ex}_{\log 3}(n, T_{k+1}) - \eps' n^2} \le 2^{- \eps n^2} f(n, T_{k+1}),
\]
because $|\C|\leq 2^{n^{2-\varepsilon'}}$, by (c), and $f(n, T_{k+1})\ge 2^{{\rm ex}_{\log 3}(n, T_{k+1})}$. Thus it suffices to show that every digraph $G\in \C_1$ satisfies $G= DT_{k}(n) \pm \a n^2$. By (b), each $G\in \C_1$ contains at most $\eps' n^{k+1}$ copies of $T_{k+1}$. Thus by Lemma~\ref{lem:rem} we obtain a $T_{k+1}$-free digraph $G'$ after deleting at most $\g n^2$ edges from $G$. Then $e_{\log 3}(G')\ge {\rm ex}_{\log 3}(n, T_{k+1}) - (\eps' + \g) n^2$. We next apply Lemma~\ref{lem:sta} to $G'$ and derive that $G'= DT_{k}(n) \pm \beta n^2$. As a result, the original digraph $G$ satisfies $G = DT_{k}(n) \pm (\beta + \g) n^2$, and hence $G = DT_{k}(n) \pm \a n^2$ as required.
\end{proof}

\section{Exact structure of typical $T_{k+1}$-free oriented graphs and digraphs} \label{Section: Exact structure of $T_k$-free oriented graphs and digraphs}

From Section~\ref{Section: Rough structure of $T_k$-free oriented graphs and digraphs} we know that a typical $T_{k+1}$-free oriented graph is almost $k$-partite (and similarly for digraphs). In this section we use this information to show inductively that we can omit the `almost' in this statement (see Lemma~\ref{inductive step} and the proof of Theorem~\ref{thm:main1} at the end of this section). 
The use of induction to obtain such exact results from approximate ones has been a useful tool in the past 
(already used in~\cite{EKR}),
but involves obstacles which are specific to the problem at hand.
Lemma~\ref{inductive step} relies on several simple observations about the typical structure of almost $k$-partite oriented graphs and digraphs (see Lemmas~\ref{step 1},~\ref{step 2} and~\ref{step 3}).

Recall that $t_k(n)$ denotes the maximum number of edges in a $k$-partite (undirected) graph on $n$ vertices, i.e. the number of edges in the $k$-partite Tur\'an graph on $n$ vertices. We say that a $k$-partition of vertices is \textit{balanced} if the sizes of any two partition classes differ by at most one. Given a $k$-partition $Q$ of $[n]$ with partition classes $V_1,\dots, V_k$, and a graph, oriented graph or digraph $G=(V,E)$ on vertex set $[n]$, and an edge $e=uv\in E$ with $u\in V_i$ and $v\in V_j$, we call $e$ a \emph{crossing edge} if $i\ne j$. In Lemma~\ref{bounds} below we give upper and lower bounds on $T(n,k)$ and $T^*(n,k)$, in terms of $t_k(n)$ (recall that $T(n,k)$ and $T^*(n,k)$ were defined in Section~\ref{Section: Notation and tools}). Lemma~\ref{bounds} is used in the proof of Theorem~\ref{thm:main1}.

\begin{lemma}\label{bounds}
Let $k\geq 2$. For sufficiently large $n$ we have the following:
\begin{enumerate}[{\rm (i)}]
\item $\frac{k^n3^{t_k(n)}}{2k!n^{k-1}}\leq  \frac{1}{2k!}\binom{n}{\left\lfloor \frac{n}{k} \right\rfloor, \dots, \left\lfloor \frac{n+k-1}{k} \right\rfloor}3^{t_k(n)}< T(n,k)< k^n3^{t_k(n)}.$
\item $\frac{k^n4^{t_k(n)}}{2k!n^{k-1}}< T^*(n,k)< k^n4^{t_k(n)}.$
\end{enumerate}
\end{lemma}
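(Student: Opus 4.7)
I would prove the four inequalities of (i) separately; the digraph case (ii) follows by the identical argument with $3$ replaced by $4$ throughout.

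\emph{Upper bound $T(n,k)<k^n 3^{t_k(n)}$.} For each of the $k^n$ functions $f\colon[n]\to[k]$, the number of oriented graphs respecting $f$ (i.e.\ having no edge inside any $f^{-1}(i)$) is exactly $3^{c(f)}$, where $c(f)\le t_k(n)$ counts pairs in different classes of $f$, with strict inequality for non-balanced $f$. Every $k$-partite oriented graph respects at least one such $f$, so $T(n,k)\le\sum_f 3^{c(f)}<k^n\cdot 3^{t_k(n)}$, strict because, e.g., the constant functions each contribute only $3^0=1$.

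\emph{Left-most inequality.} This reduces to $\binom{n}{\lfloor n/k\rfloor,\ldots}\ge k^n/n^{k-1}$, which follows from Stirling's formula, since the balanced multinomial is asymptotically $\Theta(k^n/n^{(k-1)/2})\gg k^n/n^{k-1}$ for large $n$.

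\emph{Lower bound $T(n,k)>\binom{n}{\lfloor n/k\rfloor,\ldots}3^{t_k(n)}/(2k!)$.} Let $B_u$ be the set of unordered balanced $k$-partitions of $[n]$. Each has exactly $k!$ ordered realisations, so $|B_u|\ge\binom{n}{\lfloor n/k\rfloor,\ldots}/k!$. For each $P\in B_u$ there are exactly $3^{t_k(n)}$ oriented graphs respecting $P$. I plan to show that for $n$ large at least half of these respect \emph{no} other balanced partition; such graphs are pairwise distinct across different $P$, giving $T(n,k)\ge|B_u|\cdot 3^{t_k(n)}/2$, which is strictly larger than the required bound. To prove the half-fraction claim, let $G$ be uniformly random among the $3^{t_k(n)}$ graphs respecting a fixed $P$, so each crossing pair of $P$ is independently in one of three states. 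For any balanced $P'\ne P$, let $d(P,P')$ be the number of pairs within a class of $P'$ but crossing $P$; these must all be non-edges in $G$, so $\Pr[G\text{ respects }P']=3^{-d(P,P')}$, and a union bound reduces the claim to showing $\sum_{P'\ne P\text{ balanced}}3^{-d(P,P')}=o(1)$ for fixed $k$ as $n\to\infty$. Classifying $P'$ by the move-profile $(m_{ij})_{i\ne j}$ (vertices moving from the $i$th $P$-class to the $j$th $P'$-class) and using the elementary estimate $d(P,P')\ge\sum_{i\ne j}m_{ij}m_{jj}=\sum_j m_{jj}s_j$, where $s_j=\sum_{i\ne j}m_{ij}$ and $m_{jj}=|A_j|-s_j$, gives a direct bound: any class that is partially swapped ($0<s_j<|A_j|$) contributes at least $\lfloor n/k\rfloor-1$ to $d$, and the count of $P'$ with a given move-profile is bounded by a multinomial whose logarithm grows linearly in the number of swapped vertices, while $d$ grows at least quadratically in their spread, so the sum converges.

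The main obstacle is the convergence of $\sum_{P'\ne P}3^{-d(P,P')}$. The delicate case is when $P'$ differs from $P$ by many vertices concentrated among few classes: the number of such $P'$ is then a large multinomial, but $d(P,P')$ is correspondingly large (growing roughly as $s_j(|A_j|-s_j)$ summed over affected classes), and one must verify via entropy estimates that the product tends to zero. Everything else is routine bookkeeping.
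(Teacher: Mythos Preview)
Your upper bound and the leftmost inequality are handled essentially as in the paper (the paper avoids Stirling for the latter by observing that the balanced multinomial is the largest summand in $k^n=\sum\binom{n}{a_1,\dots,a_k}$, of which there are at most $n^{k-1}$ summands, but either argument is fine).

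For the middle inequality your strategy is genuinely different from the paper's. You propose a first-moment union bound over competing balanced partitions $P'$, estimating $\Pr[G\text{ respects }P']=3^{-d(P,P')}$. The paper instead shows, via a Chernoff bound, that almost every such $G$ has the pseudorandom property that any $\ell\le k$ vertices outside a class $A_i$ have at least $(n/k)3^{-\ell-1}$ common out-neighbours in $A_i$; it then argues deterministically that this property forces $Q$ to be the \emph{unique} $k$-partition of $G$ (not just the unique balanced one), by first locating a copy of $T_k$ and then using common neighbourhoods to pin every remaining vertex. This avoids any summation over $P'$ and gives a stronger conclusion with less bookkeeping.

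Your union-bound route can be made to work, but your sketch has a real gap. The estimate $d(P,P')\ge\sum_j m_{jj}s_j$ depends on how you label the classes of $P'$, and for some $P'\ne P$ there is a labeling with $m_{jj}=0$ for \emph{every} $j$, making your lower bound vacuous. For instance, with $k=3$ and equal class sizes, take $B_1$ to be half of $A_2$ together with half of $A_3$, $B_2$ to be half of $A_1$ together with the remaining half of $A_3$, and $B_3$ the rest; then every $m_{jj}=0$, yet $P'\ne P$ and the number of such $P'$ is exponential in $n$. The actual $d(P,P')$ here is of order $n^2$, but your diagonal bound does not see it. The fix is either to use the exact identity $d(P,P')=\tfrac12\sum_j\sum_{i\ne i'}m_{ij}m_{i'j}$, or to first relabel the classes of $P'$ by a permutation $\sigma$ maximising $\sum_j m_{\sigma(j),j}$ before applying your bound; you do not mention this step, and without it the ``routine bookkeeping'' does not close. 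Your identification of the delicate case (many vertices concentrated among few classes) is not the only obstacle: the evenly-mixed case above is equally problematic for your particular estimate.
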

\begin{proof}
We only prove {\rm (i)} here; the proof of {\rm (ii)} is similar. For the upper bound note that $k^n$ counts the number of ordered $k$-partitions of $[n]$, and that for each such $k$-partition $Q$ the number of oriented graphs for which every edge is a crossing edge with respect to $Q$ is at most $3^{t_k(n)}$.\COMMENT{Indeed, the number of (unordered) pairs of vertices in different vertex classes of $Q$ is at most $t_k(n)$, and for each such pair there are $3$ possibilities (namely that it can be directed one way, directed the other way, or not present).}

For the lower bound we will count the number of (unordered) balanced $k$-partitions. Each such $k$-partition gives rise to $3^{t_k(n)}$ $k$-partite oriented graphs. Since the vertex classes of a balanced $k$-partition of $[n]$ have sizes $\left\lfloor \frac{n}{k} \right\rfloor, \dots, \left\lfloor \frac{n+k-1}{k} \right\rfloor$, the number of such $k$-partitions is
$$\frac{1}{k!}\binom{n}{\left\lfloor \frac{n}{k} \right\rfloor, \dots, \left\lfloor \frac{n+k-1}{k} \right\rfloor}.$$
We now show that for any given balanced $k$-partition $Q$, almost all $k$-partite oriented graphs for which $Q$ is a $k$-partition have no other possible $k$-partitions. Given a balanced $k$-partition $Q$ of $[n]$ with partition classes $A_1,\dots, A_k$, consider a random oriented graph where for each potential crossing edge we choose the edge to be either directed one way, directed the other way, or not present, each with probability $1/3$, independently. So each $k$-partite oriented graph for which $Q$ is a $k$-partition is equally likely to be generated. Given a set of vertices $A$ in a digraph $G$, we define their common out-neighbourhood $N^+(A):= \bigcap_{v\in A} N^+_G(v)$. By Theorem~\ref{Chernoff Bounds} we have that almost all graphs in the probability space satisfy the following:
\begin{enumerate}[($\alpha$)]
\item whenever $\ell \leq k$ and $i\in [k]$ and $v_1,\dots, v_{\ell}\in V(G)\setminus A_i$, we have that
$$|N^+(\{v_1,\dots, v_{\ell}\})\cap A_i|\geq (n/k)(1/3)^{\ell +1}.$$
\end{enumerate}
We now claim that if a $k$-partite oriented graph $G$ has $k$-partition $Q$ and satisfies $(\alpha)$ then $Q$ is the unique $k$-partition of $G$. Indeed, suppose that $Q'$ is a $k$-partition of $G$ with vertex classes $A_1',\dots, A_k'$. We will show that $Q'=Q$. Consider any $k$ vertices $v_1,\dots, v_k$ that are such that $G[\{v_1,\dots, v_k\}]$ is a transitive tournament. Such a set of $k$ vertices exists by $(\alpha)$. Clearly no two of these vertices can be in the same vertex class of $Q$ or $Q'$. Without loss of generality let us assume that $v_i\in A_i$ and $v_i\in A_i'$ for every $i\in [k]$. Define $N_i:= N^+(\{v_1,\dots, v_{k}\}\setminus \{v_i\})$. Since $N_i$ is the common out-neighbourhood of $\{v_1,\dots, v_{k}\}\setminus \{v_i\}$ it must be that $N_i$ is a subset of $A_i$ and a subset of $A_i'$. Note that $Q$ and $Q'$ agree on all vertices so far assigned to a partition class of $Q'$. Now consider any vertex $w$ not yet assigned to a partition class of $Q'$, and suppose $w\in A_j$ for some $j\in [k]$. For every $i\in [k]$ with $i \ne j$ we have by $(\alpha)$ that
$$|N^+(w)\cap N_i|=|N^+(\{w, v_1,\dots, v_k\} \setminus \{v_i\})\cap A_i|\geq (n/k)(1/3)^{k+1}\geq 1.$$
This together with the previous observation that $N_i\subseteq A_i'$ implies that $w\notin A_i'$. So $w\in A_j'$. Since $w$ was an arbitrary unassigned vertex we have that $A_i=A_i'$ for every $i\in [k]$, and so $Q=Q'$, which implies the claim. This completes the proof of the middle inequality in Lemma~\ref{bounds}.

To prove the first inequality note that if $a_1+\dots+a_k=n$ then $\binom{n}{a_1,\dots, a_k}$ is maximised by taking $a_j:=\left\lfloor \frac{n+j-1}{k} \right\rfloor$ for every $j\in [k]$. This implies that
$$k^n=\sum\limits_{a_1+\dots+a_k=n}\binom{n}{a_1,\dots, a_k}\leq n^{k-1}\binom{n}{\left\lfloor \frac{n}{k} \right\rfloor, \dots, \left\lfloor \frac{n+k-1}{k} \right\rfloor},$$
which in turn implies the first inequality in Lemma~\ref{bounds}, and hence completes the proof.
\end{proof}

For a given oriented graph or digraph $G$ on vertex set $[n]$ we call a $k$-partition $Q$ of $[n]$ \textit{optimal} if the number of non-crossing edges in $G$ with respect to $Q$ is at most the number of non-crossing edges in $G$ with respect to $Q'$ for every other $k$-partition $Q'$ of $[n]$.

Given $k\geq 2$ and $\eta>0$ we define $F(n, T_{k+1}, \eta)$ to be the set of all labelled $T_{k+1}$-free oriented graphs on $n$ vertices that have at most $\eta n^2$ non-crossing edges in an optimal $k$-partition. We define $F_Q(n, T_{k+1}, \eta)\subseteq F(n, T_{k+1}, \eta)$ to be the set of all such oriented graphs for which $Q$ is an optimal $k$-partition. Similarly, we define $F^*(n, T_{k+1}, \eta)$ to be the set of all labelled $T_{k+1}$-free digraphs on $n$ vertices that have at most $\eta n^2$ non-crossing edges in an optimal $k$-partition, and we define $F^*_Q(n, T_{k+1}, \eta)\subseteq F^*(n, T_{k+1}, \eta)$ to be the set of all such digraphs for which $Q$ is an optimal $k$-partition. Define 
$$f(n, T_{k+1}, \eta):=|F(n, T_{k+1}, \eta)| \hspace{0.6cm} \text{and} \hspace{0.6cm}  f_Q(n, T_{k+1}, \eta):=|F_Q(n, T_{k+1}, \eta)|,$$
and similarly
$$f^*(n, T_{k+1}, \eta):=|F^*(n, T_{k+1}, \eta)| \hspace{0.6cm} \text{and} \hspace{0.6cm}  f^*_Q(n, T_{k+1}, \eta):=|F^*_Q(n, T_{k+1}, \eta)|.$$
Then Lemma~\ref{step 0} implies that for every $\eta>0$ there exists $\varepsilon'>0$ such that
\begin{equation}\label{modified rough structure equation}
f(n, T_{k+1})\leq f(n, T_{k+1}, \eta)(1+2^{-\varepsilon' n^2}) \hspace{0.6cm}\text{and}\hspace{0.6cm} f^*(n, T_{k+1})\leq f^*(n, T_{k+1}, \eta)(1+2^{-\varepsilon' n^2})
\end{equation}
for all sufficiently large $n$. (So $\varepsilon'=2\varepsilon$, where $\varepsilon$ is as given by Lemma~\ref{step 0}.)

Given an oriented graph or digraph $G$ on vertex set $V$ and disjoint subsets $U, U'\subseteq V$ we let $\overrightarrow{e_G}(U, U')$ denote the number of edges in $E(G)$ directed from vertices in $U$ to vertices in $U'$. For convenience we will sometimes write $\overrightarrow{e}(U, U')$ for $\overrightarrow{e_G}(U, U')$ if this creates no ambiguity. Given $k\in \mathbb{N}$, $\eta, \mu>0$, and a $k$-partition $Q$ of $[n]$ with vertex classes $A_1,\dots, A_k$, we define $F_Q(n, \eta, \mu)$ (respectively $F^*_Q(n, \eta, \mu)$) to be the set of all labelled oriented (respectively directed) graphs on $n$ vertices for which $Q$ is an optimal $k$-partition and that satisfy the following:
\begin{enumerate}[(F1)]
\item the number of non-crossing edges with respect to $Q$ is at most $\eta n^2$,
\item if $U_i\subseteq A_i$ and $U_j\subseteq A_j$ with $|U_i|, |U_j|\geq \mu n$ for distinct $i,j\in [k]$, then $\overrightarrow{e}(U_i, U_j), \overrightarrow{e}(U_j, U_i)\geq |U_i||U_j|/6$,
\item $||A_i|-n/k|\leq \mu n$ for every $i\in [k]$.
\end{enumerate}
Note that property (F2) is similar to the property that the bipartite graph on vertex classes $A_i, A_j$ whose edges are directed from $A_i$ to $A_j$ is $\mu$-regular of density at least $1/6$ (and similarly for edges directed from $A_j$ to $A_i$) and that the `reduced graph' $R$ that has vertex set $\{A_1,\dots, A_k \}$ and edges between pairs that are $\mu$-regular of density at least $1/6$ is a complete digraph.

Define $F_Q(n, T_{k+1}, \eta, \mu)$ to be the set of all oriented graphs in $F_Q(n, \eta, \mu)$ that are $T_{k+1}$-free. Similarly define $F_Q^*(n, T_{k+1}, \eta, \mu)$ to be the set of all digraphs in $F_Q^*(n, \eta, \mu)$ that are $T_{k+1}$-free. Note that $F_Q(n, T_{k+1}, \eta, \mu) \subseteq F^*_Q(n, T_{k+1}, \eta, \mu)$. Define $f_Q(n, T_{k+1}, \eta, \mu):=|F_Q(n, T_{k+1}, \eta, \mu)|$ and $f^*_Q(n, T_{k+1}, \eta, \mu):=|F^*_Q(n, T_{k+1}, \eta, \mu)|$.

The next lemma shows that $f_Q(n, T_{k+1}, \eta)$ and $f_Q(n, T_{k+1}, \eta, \mu)$ are asymptotically equal for any $k$-partition $Q$ and suitable parameter values, (and similarly for $f^*_Q(n, T_{k+1}, \eta)$ and $f^*_Q(n, T_{k+1}, \eta, \mu)$).

\begin{lemma}\label{step 1}
Let $k\geq 2$ and let $0<\eta, \mu<1$ be such that $\mu^2\geq 24H(\eta)$. There exists an integer $n_0=n_0(\mu, k)$ such that the following hold for all $n\geq n_0$ and for every $k$-partition $Q$ of $[n]$:
\begin{enumerate}[{\rm (i)}]
\item $f_Q(n, T_{k+1}, \eta)-f_Q(n, T_{k+1}, \eta, \mu)\leq 3^{t_k(n)-\frac{\mu^2 n^2}{100}}.$
\item $f^*_Q(n, T_{k+1}, \eta)-f^*_Q(n, T_{k+1}, \eta, \mu)\leq 4^{t_k(n)-\frac{\mu^2 n^2}{100}}.$
\end{enumerate}
\end{lemma}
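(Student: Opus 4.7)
The plan is to fix the $k$-partition $Q=(A_1,\dots,A_k)$ of $[n]$ at the outset. A graph $G \in F_Q(n,T_{k+1},\eta)\setminus F_Q(n,T_{k+1},\eta,\mu)$ automatically satisfies (F1) and is $T_{k+1}$-free with $Q$ optimal; since (F3) is a property of $Q$ alone, I will split into Case A, where $Q$ itself violates (F3) (so $F_Q(n,T_{k+1},\eta,\mu)=\emptyset$ and the entirety of $f_Q(n,T_{k+1},\eta)$ must be bounded), and Case B, where $Q$ satisfies (F3) but $G$ violates (F2). For the upper bound I freely discard the $T_{k+1}$-free and optimality assumptions on $G$.

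In Case A some class has $s:=||A_i|-n/k|>\mu n$, so Proposition~\ref{omitted proposition} bounds the number of crossing pairs by $t_k(n)-s(s/2-k)\le t_k(n)-\mu^2 n^2/4$ (valid once $n_0\ge 4k/\mu$, so that $s\ge 4k$). Each crossing pair has $3$ states (forward, backward, absent), while the non-crossing edges contribute at most $\binom{n^2}{\le \eta n^2}\cdot 2^{\eta n^2}\le 2^{2H(\eta)n^2}$ by (\ref{entropy bound}) and the inequality $\eta\le H(\eta)$ for $\eta<1/2$. The total count is therefore at most $3^{t_k(n)-\mu^2 n^2/4}\cdot 2^{2H(\eta)n^2}$, and the hypothesis $\mu^2\ge 24H(\eta)$ together with $\log_2 3>3/2$ easily forces this to be $\le 3^{t_k(n)-\mu^2 n^2/100}$.

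In Case B we have distinct $i,j$ and $U_i\subseteq A_i$, $U_j\subseteq A_j$ with $|U_i|,|U_j|\ge \mu n$ and (after swapping directions if necessary) $\overrightarrow{e}(U_i,U_j)<|U_i||U_j|/6$. Fix $(i,j,U_i,U_j,\text{direction})$ and set $N:=|U_i||U_j|\ge \mu^2 n^2$. The number of oriented bipartite configurations on $U_i,U_j$ with exactly $e$ forward edges is $\binom{N}{e}2^{N-e}$ (choose the $e$ forward pairs, then each other pair is backward or absent); since this is increasing in $e$ on $[0,N/3]$, summing over $e\le N/6$ and applying (\ref{entropy bound}) yields
\[
\sum_{e=0}^{N/6}\binom{N}{e}2^{N-e}\ \le\ (N/6+1)\cdot\binom{N}{N/6}\cdot 2^{5N/6}\ \le\ N\cdot 2^{(H(1/6)+5/6)N}\ =\ N\cdot 3^N\cdot 2^{-\alpha N},
\]
where $\alpha:=\log_2 3-H(1/6)-5/6>0.1$. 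Multiplying by $3^{t_k(n)-N}$ for the remaining crossing pairs, by $2^{2H(\eta)n^2}$ for the non-crossing edges, and summing over the at most $2\binom{k}{2}\cdot 2^{2n}$ tuples $(i,j,U_i,U_j,\text{direction})$ gives a Case B count of at most $\binom{k}{2}\cdot 2^{2n+1}N\cdot 3^{t_k(n)}\cdot 2^{-\alpha\mu^2 n^2+2H(\eta)n^2}$. Because $\mu^2\ge 24H(\eta)$ forces $\alpha\mu^2-2H(\eta)\ge (0.1-1/12)\mu^2>(\log_2 3)\mu^2/100$, all polynomial prefactors are absorbed for $n\ge n_0(\mu,k)$, yielding the required bound. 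Part (ii) follows identically: each bipartite pair now has $4$ states, the bad-configuration count becomes $\binom{N}{\le N/6}\cdot 2^N\le 2^{(1+H(1/6))N}$, and the analogous $\alpha=2-1-H(1/6)>0.3$ leaves ample room.

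The main obstacle is precisely the tight numerical balance in Case B: the entropy saving $2^{-\alpha\mu^2 n^2}$ coming from one skewed bipartite pair must simultaneously absorb the enumeration $2^{2n}$ over choices of $(U_i,U_j)$ (easily negligible) and the unconstrained non-crossing-edge entropy $2^{2H(\eta)n^2}$ (not negligible). With $\alpha>0.1$, the hypothesis $\mu^2\ge 24H(\eta)$ is exactly the slack needed to reach the target exponent $\mu^2 n^2/100$; any weaker entropy estimate for the bipartite pair would force a correspondingly stronger hypothesis on $\mu,\eta$.
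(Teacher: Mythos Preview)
Your proof is correct and follows essentially the same decomposition as the paper: split according to whether $Q$ violates (F3) or the graph violates (F2), bound the non-crossing edges by an entropy term, and use a union bound over the offending pairs $(U_i,U_j)$ in the (F2) case. The only substantive difference is that for the (F2) failure the paper interprets the crossing edges as a uniform random configuration and applies the Chernoff bound of Theorem~\ref{Chernoff Bounds} to get a saving of $\exp(-\mu^2 n^2/24)$, whereas you count the bad bipartite configurations directly via $\sum_{e\le N/6}\binom{N}{e}2^{N-e}\le N\cdot 2^{(H(1/6)+5/6)N}$. Both yield a saving of the form $2^{-c\mu^2 n^2}$ with $c$ comfortably exceeding $(\log_2 3)/100$ once $\mu^2\ge 24H(\eta)$ is invoked, so the numerics close in either case; your version is slightly more elementary in that it avoids invoking a concentration inequality. (Your non-crossing factor $2^{2H(\eta)n^2}$ is a little looser than the paper's $2^{H(\eta)n^2}$---the orientations are already accounted for when one chooses ordered pairs---but the slack in $\alpha$ absorbs this with room to spare.)
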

\begin{proof}
We only prove {\rm (i)} here; the proof of {\rm (ii)} is similar. We choose $n_0$ such that $1/n_0 \ll \mu, 1/k$. We wish to count the number of $G\in F_Q(n, T_{k+1}, \eta)\setminus F_Q(n, T_{k+1}, \eta, \mu)$. Let $Q$ have vertex classes $A_1,\dots, A_k$. The number of ways that at most $\eta n^2$ non-crossing edges can be placed is at most
$$\binom{n^2}{\leq \eta n^2}\stackrel{(\ref{entropy bound})}{\leq} 2^{H(\eta) n^2}.$$

If $||A_i|-n/k|>\mu n$ for some $i\in [k]$ then by Proposition~\ref{omitted proposition} the number of possible crossing edges is at most
$$t_k(n)-\mu n\left( \frac{\mu n}{2}-k\right)\leq t_k(n)-\frac{\mu^2 n^2}{3}.$$
We can conclude that the number of $G\in F_Q(n, T_{k+1}, \eta)\setminus F_Q(n, T_{k+1}, \eta, \mu)$ that fail to satisfy (F3) is at most
$$2^{H(\eta) n^2}3^{t_k(n)-\mu ^2 n^2 /3 }.$$

Every $G\in F_Q(n, T_{k+1}, \eta)\setminus F_Q(n, T_{k+1}, \eta, \mu)$ that satisfies property (F3) must fail to satisfy property (F2). For a given choice of at most $\eta n^2$ non-crossing edges, consider the random oriented graph $H$ where for each possible crossing edge with respect to $Q$ we choose the edge to be either directed in one direction, directed in the other direction, or not present, each with probability $1/3$, independently. Note that the total number of ways to choose the crossing edges is at most $3^{t_k(n)}$, and each possible configuration of crossing edges is equally likely. So an upper bound on the number of $G\in F_Q(n, T_{k+1}, \eta)\setminus F_Q(n, T_{k+1}, \eta, \mu)$ that fail to satisfy property (F2) is
$$2^{H(\eta) n^2}3^{t_k(n)}\mathbb{P}(H \hspace{0.09cm} \text{\textnormal{on}}\hspace{0.09cm} A_1,\dots, A_k \hspace{0.09cm} \text{\textnormal{fails to satisfy (F2)}}).$$
Note that the number of choices for $U_i\subseteq A_i$ and $U_j\subseteq A_j$ as in property (F2) is at most $(2^n)^2$ and that $\mathbb{E}(\overrightarrow{e_H}(U_i, U_j))= |U_i||U_j|/3 \geq \mu^2 n^2 /3$. Hence by Theorem~\ref{Chernoff Bounds} we get that
$$\mathbb{P}(H \hspace{0.09cm} \text{\textnormal{on}}\hspace{0.09cm} A_1,\dots, A_k \hspace{0.09cm} \text{\textnormal{fails to satisfy (F2)}})\leq (2^n)^2\exp\left(- \frac{\mathbb{E}(\overrightarrow{e_H}(U_i, U_j))}{8} \right)\leq 2^{2n}\exp \left(-\frac{\mu^2 n^2}{24}\right).$$
So summing these upper bounds gives us that
\begin{align*}
f_Q(n, T_{k+1}, \eta)-f_Q(n, T_{k+1}, \eta, \mu)&\leq 2^{H(\eta) n^2}3^{t_k(n)}\left( 3^{-\mu^2 n^2 /3}+2^{2n}e^{-\frac{\mu^2 n^2}{24}}\right)\\
&\leq 3^{t_k(n)} 3^{-\frac{\mu^2 n^2}{24}(\log_3 e-\log_3 2)}2^{2n+1}  \leq 3^{t_k(n)-\frac{\mu^2 n^2}{100}},
\end{align*}
where we use that $\mu^2\geq 24H(\eta)$ and that $1/n_0 \ll \mu$.
\end{proof}

The following proposition allows us to find many disjoint copies of $T_k$ in any graph in $F^*_Q(n, \eta, \mu)$. It will be useful in proving Lemmas~\ref{step 2} and \ref{inductive step}. We omit the proof, since it amounts to embedding a small oriented subgraph into the $\mu$-regular blow-up of a complete digraph which can be done greedily\COMMENT{
First note that, since $G\in F^*_Q(n,\eta, \mu)$, if $U_1\subseteq A_i$ and $U_2\subseteq A_j$, where $i,j\in [k]$ are distinct, and $|U_1|, |U_2|\geq \mu n$ then $\overrightarrow{e}(U_1, U_2)\geq |U_1||U_2|/6$. So there must be at least $|U_1|/12$ vertices in $U_1$ that have at least $|U_2|/12$ out-neighbours in $U_2$. Indeed, if not then $\overrightarrow{e}(U_1, U_2)$ is less than
$$(|U_1|/12 \times |U_2|)+(|U_1|\times |U_2|/12)= |U_1||U_2|/6,$$
which is a contradiction.
By this observation, since $|B_{\sigma(1)}|, |B_{\sigma(2)}|\geq 12^{k-2}\mu n\geq \mu n,$ there is a set $B^{(2)}_{\sigma(1)}$, with size at least $|B_{\sigma(1)}|/12$, of vertices in $B_{\sigma(1)}$ that have at least $|B_{\sigma(2)}|/12$ out-neighbours in $B_{\sigma(2)}$. Note that if $k=2$ then we are now done, since any vertex $x$ in $B^{(2)}_{\sigma(1)}$ and any out-neighbour of $x$ in $B_{\sigma(2)}$ form a $T_2$ with the edge directed as required. Otherwise, since $|B^{(2)}_{\sigma(1)}|, |B_{\sigma(3)}|\geq 12^{k-3}\mu n\geq \mu n$ there is a set $B^{(3)}_{\sigma(1)}$, with size at least $|B_{\sigma(1)}|/(12^2)$, of vertices in $B^{(2)}_{\sigma(1)}$ that have at least $|B_{\sigma(3)}|/12$ out-neighbours in $B_{\sigma(3)}$. Since $|B_{\sigma(1)}|\geq 12^{k-2}\mu n$ we may continue this process to find at least $\mu n/12$ vertices in $B_{\sigma(1)}$ with at least $|B_{\sigma(m)}|/12$ out-neighbours in $B_{\sigma(m)}$, for every $1< m\leq k$. Fix one such vertex, $v_{\sigma(1)}$, and denote its out-neighbourhood in $B_{\sigma(m)}$ by $B^{(2)}_{\sigma(m)}$. Since $|B^{(2)}_{\sigma(2)}|\geq 12^{k-3}\mu n$ we may similarly find a vertex $v_{\sigma(2)}$ in $B^{(2)}_{\sigma(2)}$ that has at least $|B^{(2)}_{\sigma(m)}|/12$ out-neighbours in $B^{(2)}_{\sigma(m)}$, for every $2< m\leq k$. We may now continue this process to find vertices $v_1,\dots, v_k$ that form a copy of $T_k$ with edges directed as required.} (see e.g.~\cite[Lemma 7.5.2]{Diestel} for the `undirected' argument).

\begin{proposition}\label{finding T_k}
Let $n, k\in \mathbb{N}$, let $\eta, \mu >0$, let $Q$ be a $k$-partition of $[n]$ with vertex classes $A_1,\dots, A_k$, and suppose $G\in F^*_Q(n,\eta, \mu)$. For every $i\in [k]$ let $B_i\subseteq A_i$ with $|B_i|\geq 12^{k-2}\mu n$. Let $\sigma$ be a permutation of $[k]$. Then $G$ contains a copy of $T_k$ on vertices $v_1,\dots, v_k$ where for all distinct $i,j\in [k]$ we have that $v_i\in B_i$ and that there is an edge from $v_i$ to $v_j$ if and only if $\sigma(i)<\sigma(j)$.
\end{proposition}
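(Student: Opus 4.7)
The plan is a greedy embedding, mirroring the undirected argument in \cite[Lemma 7.5.2]{Diestel}. First I would reduce to the case $\sigma = \text{id}$ by relabelling the classes, so the task becomes finding $v_i \in B_i$ with $v_i v_j \in E(G)$ for all $1 \le i < j \le k$. I then proceed by induction on $k$, the case $k = 1$ being trivial.

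The key ``double counting'' observation that drives the greedy step is the following: if $U \subseteq A_i$ and $W \subseteq A_j$ lie in distinct classes and satisfy $|U|, |W| \ge \mu n$, then by (F2) we have $\overrightarrow{e}(U, W) \ge |U||W|/6$, and hence at least $|U|/12$ vertices of $U$ have at least $|W|/12$ out-neighbours in $W$; otherwise the total number of edges from $U$ to $W$ would be strictly less than $(|U|/12)\cdot|W| + |U|\cdot(|W|/12) = |U||W|/6$, contradicting (F2).

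For the inductive step, I would filter $B_1$ in $k-1$ stages. Set $B_1^{(1)} := B_1$ and, for each $m = 2, \dots, k$ in turn, let $B_1^{(m)}$ be the set of those $x \in B_1^{(m-1)}$ with at least $|B_m|/12$ out-neighbours in $B_m$. The observation (applied with $U = B_1^{(m-1)}$ and $W = B_m$) gives $|B_1^{(m)}| \ge |B_1^{(m-1)}|/12$, and the hypothesis $|B_i| \ge 12^{k-2}\mu n$ is calibrated precisely so that both $|B_1^{(m-1)}|$ and $|B_m|$ remain at least $\mu n$ whenever the observation is invoked. At the end $|B_1^{(k)}| \ge \mu n / 12 \ge 1$ (for large $n$), so I may pick any $v_1 \in B_1^{(k)}$. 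Setting $B_m' := N^+_G(v_1) \cap B_m$ for $m \ge 2$, each $B_m' \subseteq A_m$ has size at least $|B_m|/12 \ge 12^{(k-1)-2}\mu n$, so the inductive hypothesis applied to $(B_2', \dots, B_k')$ with the induced permutation produces $v_2, \dots, v_k$ forming the required $T_{k-1}$ on the appropriate classes. Combining with the edges $v_1 v_m$ (for $m \ge 2$) yields the desired copy of $T_k$.

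I do not expect any genuine obstacle here: the content of the proposition is morally the statement that the ``reduced digraph'' arising at density $1/6$ from (F2) behaves like a complete digraph for the purposes of embedding a bounded-size oriented graph, and the only care needed is in the bookkeeping of constants, ensuring that every pair $(B_1^{(m-1)}, B_m)$ encountered along the way still clears the $\ge \mu n$ threshold demanded by (F2). The exponent $k-2$ in the hypothesis $|B_i| \ge 12^{k-2}\mu n$ is exactly what is needed to survive the $k-1$ greedy filtering steps plus the subsequent recursion on $k-1$ classes.
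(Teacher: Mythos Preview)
Your argument is correct and is essentially the same as the paper's: both use the double-counting observation that (F2) forces at least $|U|/12$ vertices of $U$ to have $\ge |W|/12$ out-neighbours in $W$, and then greedily embed $T_k$ by repeatedly filtering and restricting to out-neighbourhoods, with the factor $12^{k-2}$ calibrated so that every set encountered stays above the $\mu n$ threshold. The only cosmetic difference is that you phrase the iteration as induction on $k$ whereas the paper unrolls it explicitly; if you keep the inductive formulation, just note that the inductive hypothesis should be stated for any $k$ pairwise disjoint classes satisfying (F2) (rather than literally for $G\in F^*_{Q'}(n,\eta,\mu)$ with a $(k-1)$-partition $Q'$ of $[n]$), since $A_2,\dots,A_k$ do not partition $[n]$.
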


We now show that in an optimal partition each vertex is contained in only a small number of non-crossing edges.

\begin{lemma}\label{step 2}
Let $n, k\geq 2$, let $\eta, \mu >0$, let $Q$ be a $k$-partition of $[n]$ with vertex classes $A_1,\dots, A_k$, and suppose $G\in F^*_Q(n,T_{k+1},\eta, \mu)$. Then for every $i\in [k]$ and every $x\in A_i$ we have that
$$|N^+_{A_i}(x)|+|N^-_{A_i}(x)|\leq 12^{k-2}2\mu n.$$
\end{lemma}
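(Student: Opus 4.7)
The plan is to argue by contradiction: assume that some vertex $x\in A_i$ satisfies $|N^+_{A_i}(x)|+|N^-_{A_i}(x)|>12^{k-2}\cdot 2\mu n$, and construct a copy of $T_{k+1}$ in $G$, contradicting $G\in F^*_Q(n,T_{k+1},\eta,\mu)$.

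First I would exploit the optimality of $Q$ to control the neighbourhoods of $x$ in the other classes. Moving $x$ from $A_i$ to any other class $A_j$ alters the number of non-crossing edges by exactly
$$(|N^+_{A_j}(x)|+|N^-_{A_j}(x)|)-(|N^+_{A_i}(x)|+|N^-_{A_i}(x)|),$$
so since $Q$ is optimal we must have
$$|N^+_{A_j}(x)|+|N^-_{A_j}(x)|\geq |N^+_{A_i}(x)|+|N^-_{A_i}(x)|>12^{k-2}\cdot 2\mu n$$
for every $j\in[k]$ (the inequality is trivial for $j=i$). In particular, for each $j\in[k]$ at least one of $N^+_{A_j}(x)$, $N^-_{A_j}(x)$ has size at least $12^{k-2}\mu n$; fix such a set and call it $B_j$.

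Next I would invoke Proposition~\ref{finding T_k} on $B_1,\ldots,B_k$. Let $J^-:=\{j\in[k]:B_j=N^-_{A_j}(x)\}$ and $J^+:=[k]\setminus J^-$, and choose a permutation $\sigma$ of $[k]$ such that $\sigma(J^-)=\{1,\dots,|J^-|\}$ and $\sigma(J^+)=\{|J^-|+1,\dots,k\}$. Proposition~\ref{finding T_k} produces vertices $v_1,\dots,v_k$ with $v_j\in B_j$ forming a copy of $T_k$ in which every $v_j$ with $j\in J^-$ precedes every $v_j$ with $j\in J^+$ in the transitive order. By the definition of $B_j$, each $v_j$ with $j\in J^-$ sends an edge to $x$, while each $v_j$ with $j\in J^+$ receives an edge from $x$. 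Hence inserting $x$ between the $J^-$-block and the $J^+$-block in the transitive order yields a copy of $T_{k+1}$ on $\{x,v_1,\dots,v_k\}$, which is the desired contradiction.

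The main point to check — essentially the only nontrivial piece of bookkeeping — is that the prescribed orientation in Proposition~\ref{finding T_k} can be chosen to be compatible with the insertion of $x$, which is exactly why $\sigma$ is chosen to separate $J^-$ from $J^+$. Note that it is unproblematic that $v_i\in A_i$ lies in the same class as $x$: the resulting $T_{k+1}$ is merely required as a (not necessarily induced) subgraph, so a non-crossing edge between $x$ and $v_i$ inside $A_i$ is perfectly acceptable.
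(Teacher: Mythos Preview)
Your proof is correct and follows essentially the same approach as the paper's: contradiction via optimality of $Q$ to obtain large neighbourhoods of $x$ in every class, selection of $B_j\subseteq A_j$ with $|B_j|\ge 12^{k-2}\mu n$, splitting into $J^+$ and $J^-$, and then applying Proposition~\ref{finding T_k} with a permutation $\sigma$ placing $J^-$ before $J^+$ so that $x$ can be inserted to complete a $T_{k+1}$. Your explicit remark that the non-crossing edge between $x$ and $v_i$ causes no trouble is a helpful clarification not spelled out in the paper.
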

\begin{proof}
Suppose not, so that there exists $x\in A_i$, for some $i\in [k]$, such that $|N^+_{A_i}(x)|+|N^-_{A_i}(x)|> 12^{k-2}2\mu n$. Since $Q$ is an optimal $k$-partition of $G$, it must be that
$$|N^+_{A_j}(x)|+|N^-_{A_j}(x)|\geq |N^+_{A_i}(x)|+|N^-_{A_i}(x)|> 12^{k-2}2\mu n$$
for every\COMMENT{If this inequality fails for some $j\in [k]\setminus \{i\}$ we can construct a partition with less non-crossing edges than our optimal partition by moving $x$ from $A_i$ to $A_j$, which contradicts the definition of optimality.} $j\in [k]$.

For every $j\in [k]$ define $B_j$ to be $N^+_{A_j}(x)$ if $|N^+_{A_j}(x)|\geq|N^-_{A_j}(x)|$, and $N^-_{A_j}(x)$ otherwise. So $|B_j|\geq 12^{k-2}\mu n$. Let $J^+$ be the set of all $j\in [k]$ such that $B_j=N^+_{A_j}(x)$, and let $J^-:= [k]\setminus J^+$. Fix a permutation $\sigma$ of $[k]$ with the property that $\sigma(i)<\sigma(j)$ whenever $i\in J^-$ and $j\in J^+$. Now Proposition~\ref{finding T_k} implies that $G$ contains a copy of $T_k$ on vertices $v_1,\dots, v_k$ where for all distinct $i,j\in [k]$ we have that $v_i\in B_i$ and that the edge between $v_i$ and $v_j$ is directed towards $v_j$ if and only if $\sigma(i)<\sigma(j)$. By the definition of $\sigma$, $x$ together with this copy of $T_k$ forms a copy of $T_{k+1}$. This is a contradiction, since $G\in F^*_Q(n,T_{k+1},\eta, \mu)$, and so this completes the proof.
\end{proof}

The following result shows that an optimal partition of a graph does not change too much upon the removal of just two vertices from the graph.

\begin{lemma}\label{step 3}
Let $k\geq 2$, and let $0<\mu<1/(3k^2)^{12}$ and $0< \eta < \mu^2/3$. There exists an integer $n_0= n_0(\mu, k)$ such that the following holds for all $n\geq n_0$. Let $Q$ be a partition of $[n]$ with vertex classes $A_1,\dots, A_k$ and let $x,y$ be distinct elements of $A_1$. Then there is a set $\mathcal{P}$ of $k$-partitions of $[n]\setminus \{x,y\}$, with $|\mathcal{P}|\leq e^{\mu^{2/3} n}$, such that, for every $G\in F^*_Q(n,T_{k+1},\eta, \mu)$, every optimal $k$-partition of $G-\{x,y\}$ is an element of $\mathcal{P}$.
\end{lemma}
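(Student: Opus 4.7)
\medskip
\noindent
The plan is to show that, after relabelling the classes of $Q'$, any optimal $k$-partition $Q'$ of $G-\{x,y\}$ differs from $Q|_{[n]\setminus\{x,y\}}$ on at most $s_0:=2k^2\mu n$ vertices, and then to count such partitions.

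Fix $G\in F^*_Q(n,T_{k+1},\eta,\mu)$ and let $Q'$ be any optimal $k$-partition of $G-\{x,y\}$ with classes $A_1',\dots,A_k'$. Choose a permutation $\pi$ of $[k]$ maximising $\sum_i |A_i\cap A_{\pi(i)}'|$ and relabel the classes of $Q'$ so that $\pi=\mathrm{id}$. Write $B_j:=A_j\cap A_j'$ and $D_{ij}:=A_i\cap A_j'$ for $i\neq j$, and set $s:=\sum_{i\neq j}|D_{ij}|$. By (F1), $Q|_{[n]\setminus\{x,y\}}$ has at most $\eta n^2$ non-crossing edges in $G-\{x,y\}$, and by optimality the same bound holds for $Q'$.

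I first show $|B_j|\geq \mu n$ for every $j$. If $|B_{j_0}|<\mu n$, then (F3) yields $|D_{j_0}|\geq n/k-3\mu n\geq n/(2k)$, so by pigeonhole some $|D_{j_0 j_1}|\geq n/(2k^2)\geq \mu n$. The extremality of $\pi$ applied to the transposition $(j_0,j_1)$ gives $|B_{j_0}|+|B_{j_1}|\geq |D_{j_0 j_1}|+|D_{j_1 j_0}|$, whence $|B_{j_1}|\geq \mu n$. Then (F2), applied with $D_{j_0 j_1}\subseteq A_{j_0}$ and $B_{j_1}\subseteq A_{j_1}$, produces at least $|D_{j_0 j_1}||B_{j_1}|/3\geq n^2/(20k^4)$ edges inside $A_{j_1}'$, which is much larger than $\eta n^2<\mu^2 n^2/3$ under the assumption $\mu<1/(3k^2)^{12}$, contradicting the upper bound on $Q'$'s non-crossing edges. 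With the uniform bound $|B_j|\geq \mu n$ in hand, for every pair $(i,j)$ with $i\neq j$ and $|D_{ij}|\geq \mu n$, (F2) applied to $D_{ij}$ and $B_j$ yields at least $|D_{ij}|\mu n/3$ non-crossing edges in $Q'$; summing,
\[
\frac{\mu n}{3}\sum_{i\neq j,\,|D_{ij}|\geq \mu n}|D_{ij}|\leq \sum_j e(G[A_j'])\leq \eta n^2,
\]
so using $\eta\leq \mu^2/3$ the ``large'' pairs contribute at most $\mu n$ to $s$. The remaining pairs (each of size $<\mu n$) contribute at most $k^2\mu n$, whence $s\leq s_0$.

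Finally, any such $Q'$ is determined by the choice of the relabelling $\pi\in S_k$, a set $M\subseteq [n]\setminus\{x,y\}$ with $|M|\leq s_0$, and a map $M\to[k]$ specifying the new class of each moved vertex. By (\ref{entropy bound}),
\[
|\P|\leq k!\binom{n}{\leq s_0}k^{s_0}\leq k!\cdot 2^{H(2k^2\mu)n+2k^2\mu n\log k}.
\]
Using $H(p)\leq 2p\log(1/p)$ for small $p$ together with $\mu<1/(3k^2)^{12}$, a routine computation bounds this by $e^{\mu^{2/3}n}$ for $n$ sufficiently large. The main obstacle is the uniform lower bound $|B_j|\geq \mu n$: this is what enables the quasi-randomness property (F2) to convert each large $D_{ij}$ into an unavoidable surplus of non-crossing edges in $Q'$, supplying the crucial contradiction with optimality.
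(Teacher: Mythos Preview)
Your proof is correct. The overall strategy---show that any optimal partition of $G-\{x,y\}$ is close to $Q$ restricted, then count nearby partitions---matches the paper's, but your execution of the first step is more elaborate than necessary.

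The paper avoids your relabelling and the auxiliary claim $|B_j|\geq \mu n$ entirely. It simply defines $\mathcal{P}$ to be the set of all $k$-partitions $B_1,\dots,B_k$ of $[n]\setminus\{x,y\}$ such that no class $B_i$ meets two distinct $A_j,A_{j'}$ in at least $\mu n$ vertices each. A single application of (F2) shows that if this condition fails for some $i,j,j'$, then $e_G(B_i)\geq e_G(A_j\cap B_i, A_{j'}\cap B_i)\geq (\mu n)^2/3>\eta n^2$, contradicting optimality. This immediately forces each $B_i$ to have a unique ``dominant'' $A_j$ (and pigeonhole on $|A_j|>k\mu n$ makes this a bijection), so every partition in $\mathcal{P}$ arises from $Q$ by a permutation and moving at most $\mu n$ vertices between each ordered pair of classes. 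Your argument reaches an equivalent conclusion ($s\leq 2k^2\mu n$) but via the extra step of establishing $|B_j|\geq \mu n$ using the extremality of your matching $\pi$; this is correct but unnecessary. The paper's route is shorter because the contradiction with (F2) can be obtained between two subsets of \emph{different} $A_j$'s inside a single $B_i$, rather than between a $D_{ij}$ and a $B_j$ as you do.
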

\begin{proof}
First note that, for any $G\in F^*_Q(n,T_{k+1},\eta, \mu)$, we have by definition that the number of non-crossing edges in $G$ with respect to $Q$ is at most $\eta n^2$. So certainly the number of non-crossing edges in $G-\{x,y\}$ with respect to the partition $A_1\setminus \{x,y\}, A_2\dots, A_k$ is at most $\eta n^2$.

Consider an arbitrary $k$-partition $B_1,\dots, B_k$ of $[n]\setminus \{x,y\}$. We claim that if there exists $i\in [k]$ and distinct $j, j'\in [k]$ such that $|A_j\cap B_i|, |A_{j'}\cap B_i|\geq \mu n$, then for any $G\in F^*_Q(n,T_{k+1},\eta, \mu)$ the number of non-crossing edges in $G-\{x,y\}$ with respect to the partition $B_1,\dots, B_k$ is larger than $\eta n^2$ (and hence $B_1,\dots, B_k$ cannot be an optimal $k$-partition of $G-\{x,y\}$). Indeed, if we find such $i, j, j'$ then by (F2) we have that the number of non-crossing edges in $G-\{x,y\}$ with respect to the partition $B_1,\dots, B_k$ is at least
$$e_G(B_i)\geq e_G(A_j\cap B_i, A_{j'}\cap B_i)\geq 2\cdot \frac{1}{6}(\mu n)^2>\eta n^2,$$
which proves the claim.

We let $\mathcal{P}$ be the set of all $k$-partitions of $[n]\setminus \{x,y\}$ for which no such $i, j, j'$ exist. So by the above claim we have that for every $G\in F^*_Q(n,T_{k+1},\eta, \mu)$, every optimal $k$-partition of $G-\{x,y\}$ is an element of $\mathcal{P}$. So it remains to show that $|\mathcal{P}|\leq e^{\mu^{2/3} n}$. Consider an element of $\mathcal{P}$ with partition classes $B_1,\dots, B_k$. For every $i\in [k]$, let $S_i:=\{j:|A_j\cap B_i|\geq \mu n\}$. Note that for every $i\in [k]$ we have that $|S_i|\leq 1$, by definition of $\mathcal{P}$. Note also that $|A_j|\geq n/k-\mu n > k\mu n$ for every $j\in [k]$, and thus for every $i\in [k]$ we have that $|S_i|=1$. Let $A_1':= A_1\setminus \{x,y\}$ and let $A_i':= A_i$ for every $i\in \{2, \dots, k\}$. So every element of $\mathcal{P}$ can be obtained by starting with the $k$-partition $A_1',\dots, A_k'$, applying a permutation of $[k]$ to the partition class labels, and then for every ordered pair of partition classes moving at most $\mu n$ elements from the first partition class to the second. Hence, since $|A_j|\leq n/k +\mu n\leq 2n/k$, we have that\COMMENT{
$k! (\mu n)^{k^2} \left( \frac{1}{\mu^2} \right)^{\mu k^2 n} \leq n^{k^2}e^{2(\log (1/ \mu))\mu k^2 n}\leq n^{k^2}e^{2((1/ \mu)^{1/4})\mu k^2 n} \leq e^{\mu^{2/3} n},$ where the last inequality holds since $\mu < 1/(3k^2)^{12}$.}
\begin{align*}
|\mathcal{P}|&\leq k!\left( \binom{2n/k}{\leq \mu n}^{k-1} \right)^k\leq k! \left( \mu n \left( \frac{2en/k}{\mu n} \right)^{\mu n} \right)^{k(k-1)}\leq k! (\mu n)^{k^2} \left( \frac{1}{\mu^2} \right)^{\mu k^2 n} \leq e^{\mu^{2/3} n},
\end{align*}
as required.
\end{proof}

Define $F_Q'(n,T_{k+1}, \eta)$ to be the set of all oriented graphs in $F_Q(n,T_{k+1}, \eta)$ that have at least one non-crossing edge with respect to $Q$. Define $f_Q'(n,T_{k+1}, \eta) := |F_Q'(n,T_{k+1}, \eta)|$. Similarly define $F^{*'}_Q(n,T_{k+1}, \eta)$ to be the set of all digraphs in $F^*_Q(n,T_{k+1}, \eta)$ that have at least one non-crossing edge with respect to $Q$, and define $f^{*'}_Q(n,T_{k+1}, \eta) := |F^{*'}_Q(n,T_{k+1}, \eta)|$. In the following result we use Lemmas~\ref{step 0},~\ref{step 1}, ~\ref{step 2} and \ref{step 3} to give upper bounds on $f_Q'(n,T_{k+1}, \eta)$ and $f^{*'}_Q(n,T_{k+1}, \eta)$ for any $k$-partition $Q$ and suitable parameter values.

\begin{lemma}\label{inductive step}
For all $k\geq 2$ there exist $\eta>0$ and $C\in \mathbb{N}$ such that for all $n\in \mathbb{N}$ and all $k$-partitions $Q$ of $[n]$ the following hold.
\begin{enumerate}[{\rm (i)}]
\item $f_Q'(n,T_{k+1}, \eta)\leq 3^{t_k(n)} C2^{-\eta n}.$
\item $f^{*'}_Q(n,T_{k+1}, \eta)\leq 4^{t_k(n)} C2^{-\eta n}.$
\end{enumerate}
\end{lemma}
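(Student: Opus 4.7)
The plan is to prove (i) by induction on $n$, with $k \geq 2$ fixed; part (ii) would follow by the same argument with $3, 9$ replaced by $4, 16$ throughout. I would pick constants $1/n_0 \ll \eta \ll \mu \ll 1/k$ and take $C$ large enough that the claim is trivial when $n \leq n_0$. For the inductive step, I would first invoke Lemma~\ref{step 1}(i) to discard those $G \in F_Q'(n, T_{k+1}, \eta)$ which fail (F2) or (F3): these number at most $3^{t_k(n) - \mu^2 n^2/100}$, absorbed into $\tfrac12 \cdot 3^{t_k(n)} C 2^{-\eta n}$. It then suffices to bound $|\mathcal{F}|$, where $\mathcal{F} := F_Q'(n, T_{k+1}, \eta) \cap F_Q(n, T_{k+1}, \eta, \mu)$. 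For each $G \in \mathcal{F}$, I would canonically pick a non-crossing edge $xy$ (say the lexicographically smallest) with $x, y \in A_i$, and set $G' := G - \{x, y\}$. The $T_{k+1}$-free graph $G'$ has an optimal $k$-partition $Q'$, which by Lemma~\ref{step 3} lies in a set $\mathcal{P}(Q, x, y)$ of size at most $e^{\mu^{2/3} n}$. Combining the inductive hypothesis with the trivial count $3^{t_k(n-2)}$ of oriented graphs having $Q'$ as an actual $k$-partition yields at most $3^{t_k(n-2)}(1 + C 2^{-\eta(n-2)})$ possibilities for $G'$ per choice of $Q'$.

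Next I would count the reattachments of $x, y$ to $G'$. The non-crossing attachments inside $A_i$ involve at most $O(\mu n)$ neighbours of $\{x, y\}$ by Lemma~\ref{step 2}, contributing at most a factor $2^{O(\mu n \log(1/\mu))}$ via (\ref{entropy bound}). The crossing attachments --- the pairs $(xv, yv)$ for $v \in V \setminus A_i$ --- each have $9$ options if unrestricted, giving $9^{n - |A_i|}$ configurations overall, which matches $3^{t_k(n) - t_k(n-2)} \cdot 2^{O(\mu n)}$ since $|A_i| = n/k \pm \mu n$ by (F3). The crucial saving will come from the $T_{k+1}$-free condition on $G$: iteratively applying Proposition~\ref{finding T_k} to shrinking sub-classes of each $A_j$ with $j \neq i$ --- whose sizes remain at least $12^{k-3}\mu n$ throughout by (F3) and the $\mu$-regularity (F2) of $G'$ --- I would extract $\ell = \Omega(n)$ pairwise vertex-disjoint copies of $T_{k-1}$ in $G'$, each placing one vertex in every $A_j$ ($j \neq i$). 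For a copy on $\{v_2, \ldots, v_k\}$ with its inherited transitive ordering, exactly $\binom{k+1}{2}$ configurations of $(xv_j, yv_j)_{j \neq i}$ combine with the oriented edge $xy$ to form a $T_{k+1}$ (one for each insertion of the ordered pair $(x, y)$ into the linear order of the $T_{k-1}$). Since these $\ell$ constraints act on pairwise disjoint variable sets, they multiply to yield at most
\[
9^{n - |A_i|}\left(1 - \binom{k+1}{2}/9^{k-1}\right)^{\ell} \leq 9^{n - |A_i|} \cdot 2^{-\alpha n}
\]
valid crossing attachments, for some $\alpha = \alpha(k) > 0$.

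Assembling all factors, one obtains
\[
|\mathcal{F}| \leq n^2 \cdot e^{\mu^{2/3} n} \cdot 3^{t_k(n-2)}\bigl(1 + C 2^{-\eta(n-2)}\bigr) \cdot 9^{n - |A_i|} \cdot 2^{-\alpha n + O(\mu n \log(1/\mu))}.
\]
Using $9^{n - |A_i|} \cdot 3^{t_k(n-2)} \leq 3^{t_k(n)} \cdot 2^{O(\mu n)}$ from (F3), and the hierarchy $\eta \ll \mu^{2/3} \ll \alpha$, this simplifies to $\tfrac12 \cdot 3^{t_k(n)} C 2^{-\eta n}$, completing the induction. The hard part will be the crossing-attachment count: one must carefully extract $\Omega(n)$ pairwise vertex-disjoint copies of $T_{k-1}$ with the required placement, and verify that the resulting forbidden patterns act on disjoint variable sets, so that their independence converts each per-copy fractional saving into the desired exponential saving $2^{-\alpha n}$. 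A minor secondary issue is the slight discrepancy between the thresholds $\eta n^2$ at scale $n$ and $\eta(n-2)^2$ at scale $n-2$, which can be absorbed into the constant $C$ for small $n$.
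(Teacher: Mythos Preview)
Your outline follows the paper's argument closely, and the individual counting steps (non-crossing attachments via Lemma~\ref{step 2}, the greedy extraction of disjoint $T_{k-1}$'s via Proposition~\ref{finding T_k}, and the resulting exponential saving on crossing attachments) are all correct. Your count of exactly $\binom{k+1}{2}$ forbidden attachment patterns per $T_{k-1}$ is even sharper than the paper's ``at least one''.

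However, the issue you flag at the end is not a minor discrepancy that can be ``absorbed into the constant $C$ for small~$n$''; it is a genuine gap in the induction. If you induct on (i) alone, the inductive hypothesis bounds only $f'_{Q'}(n-2,T_{k+1},\eta)$, i.e.\ graphs on $[n]\setminus\{x,y\}$ with $Q'$ optimal and at most $\eta(n-2)^2$ non-crossing edges. But $G' = G-\{x,y\}$ is only guaranteed to have at most $\eta n^2$ non-crossing edges with respect to $Q'$ (inherited from $G$), and $\eta n^2 > \eta(n-2)^2$. Thus $G'$ need not belong to $F'_{Q'}(n-2,T_{k+1},\eta) \cup \{\text{graphs with zero non-crossing edges}\}$, so your two-term bound $3^{t_k(n-2)}(1+C2^{-\eta(n-2)})$ does not cover all possibilities for $G'$. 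Since the gap $\eta n^2 - \eta(n-2)^2 \approx 4\eta n$ is present at every scale, it cannot be pushed into the base case.

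The paper closes this gap by simultaneously proving, alongside (i), the auxiliary statement $f_Q(n,T_{k+1}) \le 3^{t_k(n)}(1+C2^{-\eta n})$, where $f_Q(n,T_{k+1})$ counts \emph{all} $T_{k+1}$-free oriented graphs with $Q$ optimal, with no upper bound on the number of non-crossing edges. This is what is actually used as the inductive hypothesis in Step~2. To recover this unrestricted bound at scale $n$ from the computations above, one must separately control graphs with more than $\eta n^2$ non-crossing edges; this is precisely where Lemma~\ref{step 0} (the rough-structure lemma) enters, contributing the extra parameter $\varepsilon$ in the hierarchy $1/n_0 \ll \varepsilon \ll \eta \ll \mu \ll 1/k$ that your sketch omits.
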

\begin{proof}
We only prove {\rm (i)} here; the proof of {\rm (ii)} is similar. Choose $C, n_0\in \mathbb{N}$ and $\varepsilon, \eta, \mu >0$ such that
$$1/C \ll 1/n_0 \ll \varepsilon \ll \eta \ll \mu \ll 1/k.$$
Define $F_Q(n,T_{k+1})$ to be the set of all $T_{k+1}$-free oriented graphs on $n$ vertices for which $Q$ is an optimal $k$-partition, and define $f_Q(n,T_{k+1}) = |F_Q(n,T_{k+1})|$.

The proof proceeds by induction on $n$. In fact, in addition to {\rm (i)} we will inductively show that
\begin{equation}\label{inductive aim 1}
f_Q(n,T_{k+1})\leq 3^{t_k(n)} (1+ C2^{-\eta n}).
\end{equation}
The result holds trivially for $n< n_0$ since $1/C \ll 1/n_0$. So let $n\geq n_0$ and let us assume that for every $k$-partition $Q'$ of $[n-2]$ we have that
\begin{equation}\label{inductive assumption}
f_{Q'}(n-2,T_{k+1})\leq 3^{t_k(n-2)}(1+ C2^{-\eta(n-2)}).
\end{equation}

Let $Q$ have partition classes $A_1,\dots, A_k$. Define $F_Q'(n,T_{k+1}, \eta, \mu)$ to be the set of all graphs in $F_Q(n,T_{k+1}, \eta, \mu)$ that have at least one non-crossing edge with respect to $Q$. Define $f_Q'(n,T_{k+1}, \eta, \mu) := |F_Q'(n,T_{k+1}, \eta, \mu)|$. We will first find an upper bound for $f_Q'(n,T_{k+1}, \eta, \mu)$. We will find this bound in four steps. Note that (F3) implies that $f_Q'(n,T_{k+1}, \eta, \mu)=0$ unless $||A_i|-n/k|\leq \mu n$, so we may assume that this inequality holds.

\vspace{0.3cm}
\noindent {\bf Step 1:} Let $B_1$ be the number of ways to choose a single non-crossing edge $xy$ with respect to $Q$. Then $B_1\leq n^2$. Let $A_i$ be the partition class of $Q$ containing $x$ and $y$.

\vspace{0.3cm}
\noindent {\bf Step 2:} Let $B_2$ be the number of ways to choose the edges that do not have an endpoint in $\{x, y\}$. By Lemma~\ref{step 3} there is a set $\mathcal{P}$ of $k$-partitions of $[n]\setminus \{x,y\}$, with $|\mathcal{P}|\leq e^{\mu^{2/3} n}$, such that, for every $G\in F_Q(n,T_{k+1},\eta, \mu)$, every optimal $k$-partition of $G-\{x,y\}$ is an element of $\mathcal{P}$. So we have by our inductive hypothesis that
$$B_2\leq \sum\limits_{Q'\in \mathcal{P}} f_{Q'}(n-2, T_{k+1})\stackrel{(\ref{inductive assumption})}{\leq} e^{\mu^{2/3} n} 3^{t_k(n-2)}(1+ C2^{-\eta(n-2)})\leq 3^{t_k(n-2)}Ce^{\mu^{1/2}n}.$$

\noindent {\bf Step 3:} Let $B_3$ be the number of possible ways to construct the edges between $x, y$ and the vertices outside $A_i$. Let $U$ be the set of edges chosen in Step $2$. One can view $U$ as a subset of the edge set of a graph $G$ in $F_Q'(n,T_{k+1},\eta, \mu)$. Let $U'$ be the subset of $U$ consisting of all those edges in $U$ that do not have an endpoint in $A_i$. So $U'$ can be viewed as the edge set of a subgraph $G'$ of $G$ with $G'\in F_{\tilde{Q}}(n-|A_i|,5\eta, 3\mu)$, where the set of partition classes of $\tilde{Q}$ is $\{A_1,\dots, A_k\}\setminus \{A_i\}$. By repeatedly applying Proposition~\ref{finding T_k} to $G'$ we can find at least $n/k-\mu n -12^{k-3}3\mu n$ vertex-disjoint copies of $T_{k-1}$ in $G'$, each with precisely one vertex in each of the $A_j$ for $j\ne i$. Consider the $2(k-1)$ potential edges between $x, y$ and the vertices of any such $T_{k-1}$. If we wish for our graph to remain $T_{k+1}$-free then not all of the possible $3^{2(k-1)}$ sets of such edges are allowed. So since the number of vertices outside $A_i$ not contained in one of these $T_{k-1}$ is at most $(k-1)(2\mu n +12^{k-3}3\mu n)\leq \mu^{1/2}n \log_3 e/2$, we have that
$$B_3\leq \left( 3^{2(k-1)}-1\right)^{n/k}3^{2 \left( \mu^{1/2}n \log_3 e/2 \right) }< \left( 3^{2(k-1)} \left( 1-3^{-2k}\right) \right)^{n/k} e^{ \mu^{1/2}n }\leq 3^{2\frac{k-1}{k}n} e^{-\frac{n}{9^k k}}e^{ \mu^{1/2}n}.$$

\noindent {\bf Step 4:} Let $B_4$ be the number of possible ways to construct the edges between $x, y$ and the other vertices in $A_i$. Note that by Lemma~\ref{step 2}, $x$ and $y$ each have at most $12^{k-2}2\mu n$ neighbours inside $A_i$, and for each of these the edge between them may be oriented in either direction. So since $|A_i|\leq n$, we have that
\begin{align*}
B_4&\leq \binom{n}{\leq 12^{k-2}2\mu n}^2 \left(2^{12^{k-2}2\mu n}\right)^2\leq \binom{n}{12^{k-2}2\mu n}^2 \left( 2^{12^{k-2}2\mu n} \right)^2 \left(2^{12^{k-2}2\mu n}\right)^2\\
&\leq \left( \frac{2e}{12^{k-2}\mu}\right)^{12^{k-2}4\mu n}\leq e^{\mu^{1/2}n}.
\end{align*}

In Steps $1$--$4$ we have considered all possible edges, and so $f_Q'(n,T_{k+1}, \eta, \mu)\leq B_1\cdot B_2\cdot B_3\cdot B_4$. Together with the fact that $t_k(n)\geq t_k(n-2)+2((k-1)/k)(n-2)$ this implies that
\begin{align}\label{complicated equation}
f_Q'(n,T_{k+1}, \eta, \mu)&\leq n^2 3^{t_k(n-2)}C3^{2((k-1)/k)n}e^{3\mu^{1/2} n} e^{-n/(k9^k)}\\
&\leq 3^{t_k(n)}Ce^{-n/(2k9^k)} \leq 3^{t_k(n)} C2^{-3\eta n}\nonumber.
\end{align}
Now, note that since $F_Q(n,T_{k+1}, \eta, \mu), F_Q'(n,T_{k+1}, \eta) \subseteq F_Q(n,T_{k+1}, \eta)$ we have that
\begin{align*}
f_Q'(n,T_{k+1}, \eta)&= |F_Q'(n,T_{k+1}, \eta)\cap F_Q(n,T_{k+1}, \eta, \mu)| + |F_Q'(n,T_{k+1}, \eta) \setminus F_Q(n,T_{k+1}, \eta, \mu)|\\
&= |F_Q'(n,T_{k+1}, \eta, \mu)| + |F_Q'(n,T_{k+1}, \eta) \setminus F_Q(n,T_{k+1}, \eta, \mu) |\\
&\leq |F_Q'(n,T_{k+1}, \eta, \mu)| + |F_Q(n,T_{k+1}, \eta) \setminus F_Q(n,T_{k+1}, \eta, \mu) |\\
&= f_Q'(n,T_{k+1}, \eta, \mu) + \left( f_Q(n,T_{k+1}, \eta) - f_Q(n,T_{k+1}, \eta, \mu) \right).
\end{align*}
This together with Lemma~\ref{step 1}{\rm (i)} gives us that
$$f_Q'(n,T_{k+1}, \eta)\leq f_Q'(n,T_{k+1}, \eta, \mu)+ 3^{t_k(n)-\frac{\mu ^2 n^2}{100}} \stackrel{(\ref{complicated equation})}{\leq} 3^{t_k(n)}C2^{-\eta n},$$
which proves {\rm (i)}. So it remains to prove (\ref{inductive aim 1}).

Note that the number of graphs in $F_Q(n,T_{k+1}, \eta, \mu)$ for which every edge is a crossing edge with respect to $Q$ is at most $3^{t_k(n)}$. Since $f_Q(n,T_{k+1}, \eta, \mu)-f_Q'(n,T_{k+1}, \eta, \mu)$ is precisely the number of such graphs, we have that
$$f_Q(n,T_{k+1}, \eta, \mu)-f_Q'(n,T_{k+1}, \eta, \mu) \leq 3^{t_k(n)}.$$
This together with (\ref{complicated equation}) implies that
$$f_Q(n,T_{k+1}, \eta, \mu)\leq 3^{t_k(n)}\left( 1+C2^{-3\eta n} \right).$$
Together with Lemma~\ref{step 1}{\rm (i)} this implies that
\begin{equation}\label{flower equation}
f_Q(n,T_{k+1}, \eta)\leq f_Q(n,T_{k+1}, \eta, \mu)+ 3^{t_k(n)-\frac{\mu ^2 n^2}{100}} \leq 3^{t_k(n)}\left( 1+C2^{-2\eta n} \right).
\end{equation}
On the other hand, Lemma~\ref{step 0}{\rm (i)} implies that
\begin{eqnarray}
f(n, T_{k+1})-f(n, T_{k+1}, \eta) \hspace{-0.3cm} & \leq & \hspace{-0.3cm} f(n, T_{k+1})2^{-\varepsilon n^2} \stackrel{(\ref{modified rough structure equation})}{\leq} 2 f(n, T_{k+1}, \eta)2^{-\varepsilon n^2}\nonumber\\
& \stackrel{(\ref{flower equation})}{\leq} & \hspace{-0.3cm} 2k^n 3^{t_k(n)}\left( 1+C2^{-2\eta n} \right)2^{-\varepsilon n^2}\leq 3^{t_k(n)} C2^{-2\eta n}.\nonumber
\end{eqnarray}
Now this together with (\ref{flower equation}) implies that
$$f_Q(n, T_{k+1})\leq f_Q(n, T_{k+1}, \eta) + (f(n, T_{k+1})-f(n, T_{k+1}, \eta))\leq 3^{t_k(n)} \left( 1+ C2^{-\eta n} \right).$$
This completes the proof of~(\ref{inductive aim 1}).
\end{proof}

We can now finally prove Theorem~\ref{thm:main1} using Lemmas~\ref{bounds} and~\ref{inductive step} together with the bounds in~(\ref{modified rough structure equation}).

\removelastskip\penalty55\medskip\noindent{\bf Proof of Theorem~\ref{thm:main1}.}
We only prove {\rm (i)} here; the proof of {\rm (ii)} is similar. Let $\eta, C$ be given by Lemma~\ref{inductive step} and choose $n_0$ and $\varepsilon$ such that $1/n_0 \ll \varepsilon \ll \eta, 1/k$. Consider any $n\geq n_0$ and let $\mathcal{Q}$ be the set of all $k$-partitions of $[n]$. Note that
$$f'(n, T_{k+1}, \eta)\leq \sum\limits_{Q\in \mathcal{Q}} f_Q'(n, T_{k+1}, \eta).$$
So by Lemma~\ref{inductive step}{\rm (i)} and the fact that $|\mathcal{Q}|\leq k^n$ we have that
$$f'(n, T_{k+1}, \eta)\leq k^n 3^{t_k(n)}C2^{-\eta n}.$$
Recall from~(\ref{modified rough structure equation}) that
$$f(n, T_{k+1})\leq f(n, T_{k+1}, \eta)(1+2^{-\varepsilon n^2}).$$
Together with the fact that $f(n, T_{k+1}, \eta)=f'(n, T_{k+1}, \eta)+T(n,k)$ and the upper bound in Lemma~\ref{bounds}{\rm (i)}, this implies that
\begin{align*}
f(n, T_{k+1})-T(n,k)&\leq f'(n, T_{k+1}, \eta)+ f(n, T_{k+1}, \eta)2^{-\varepsilon n^2}\\
&= f'(n, T_{k+1}, \eta)+ (f'(n, T_{k+1}, \eta)+T(n,k))2^{-\varepsilon n^2}\\
&\leq  k^n 3^{t_k(n)}C2^{-\eta n}+ k^n3^{t_k(n)}(1+ C2^{-\eta n})2^{-\varepsilon n^2}.
\end{align*}
Now the lower bound in Lemma~\ref{bounds}{\rm (i)} gives us that $f(n, T_{k+1})-T(n,k)=o(T(n,k))$. So  $f(n, T_{k+1})= T(n,k)(1+ o(1))$, as required.
\endproof

\section{Rough structure of typical $C_k$-free oriented graphs and digraphs}\label{Section: Rough structure of $C_k$-free oriented graphs and digraphs}

In this section we prove several stability results for $C_k$-free digraphs (Lemmas~\ref{cycle stability},~\ref{cycle stability digraphs} and~\ref{odd cycle stability}). These are used (together with Theorem~\ref{thm:contain}) at the end of the section to determine the `rough' structure of typical $C_k$-free oriented graphs and digraphs.

We will make use of the following definitions. For disjoint sets of vertices $A, A'$, we define $\overrightarrow{K}(A, A')$ to be the oriented graph on vertex set $A\cup A'$ with edge set consisting of all the $|A||A'|$ edges that are directed from $A$ to $A'$. Given a digraph $G$, $A\subseteq V(G)$ and $x\in V(G)\setminus A$, we say that $G[A, \{x\}]$ is an \emph{in-star} if $G[A, \{x\}]=\overrightarrow{K}(A, \{x\})$, and we say that $G[A, \{x\}]$ is an \emph{out-star} if $G[A, \{x\}]=\overrightarrow{K}(\{x\}, A)$. The following proposition will prove useful to us many times in this section.

\begin{proposition}\label{cycle proposition}
Let $a\in \mathbb{R}$ with $1\leq a\leq 2$, let $k\in \mathbb{N}$ with $k\geq 2$ and let $G$ be a $C_{k+1}$-free digraph. Suppose $G$ contains a copy $C$ of $C_k$ with vertex set $A\subseteq V(G)$, and let $x\in V(G)\setminus A$. Then the following hold:
\begin{enumerate}[{\rm (i)}]
\item $e_a(A, \{x\})\leq k$,
\item if $G[A, \{x\}]$ is not an in-star or an out-star, then $e_a(A, \{x\})\leq k-2+a$,
\item if $G[A, \{x\}]$ is not an in-star or an out-star, and contains no double edges, then $e_a(A, \{x\})\leq k-1$.
\end{enumerate}
Suppose moreover that for some $\ell \in \{k-1, k\}$, $G$ contains a copy $C'$ of $C_{\ell}$ with vertex set $A'\subseteq V(G)$, where $A\cap A'=\emptyset$. Then the following hold:
\begin{enumerate}[{\rm (i)}]
\setcounter{enumi}{3}
\item $e_a(A, A')\leq k\ell$,
\item if $G[A, A']\notin \{ \overrightarrow{K}(A, A'), \overrightarrow{K}(A', A) \}$, then $e_a(A, A')\leq k\ell-2+a$,
\item if $G[A, A']\notin \{ \overrightarrow{K}(A, A'), \overrightarrow{K}(A', A) \}$, and moreover $G[A, A']$ contains no double edges, then $e_a(A, A')\leq k\ell-1$.
\end{enumerate}
\end{proposition}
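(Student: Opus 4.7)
The plan is to prove parts (i)--(iii) by analysing the arcs between $x$ and the cyclically ordered vertex set of $C$, and then reduce (iv)--(vi) to these via summing over the vertices of $A'$ together with a cyclic argument along $C'$. Write $C$ as $v_1 \to v_2 \to \cdots \to v_k \to v_1$ and set
$$I^+ := \{i \in [k] : xv_i \in E(G)\}, \qquad I^- := \{i \in [k] : v_ix \in E(G)\}.$$
The crux is that whenever $i \in I^-$ and $i+1 \in I^+$ (indices mod $k$), the sequence $v_i \to x \to v_{i+1} \to v_{i+2} \to \cdots \to v_i$ is a copy of $C_{k+1}$ in $G$, which is forbidden. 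Letting $\sigma$ denote the cyclic shift $i \mapsto i+1$ on $[k]$, this says $\sigma(I^-) \cap I^+ = \emptyset$, and since $\sigma$ is a bijection we obtain $|I^+| + |I^-| \le k$. Setting $m_2 := |I^+ \cap I^-|$ and $m_1 := |I^+ \triangle I^-|$, we have $e_a(A, \{x\}) = am_2 + m_1$ and $2m_2 + m_1 = |I^+| + |I^-| \le k$, which gives $e_a(A, \{x\}) \le k - (2-a)m_2 \le k$ using $a \le 2$; this proves (i).

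For (ii), if $m_2 \ge 1$ the previous inequality yields $e_a(A, \{x\}) \le k - (2-a) = k - 2 + a$. Otherwise $m_2 = 0$; if in addition $|I^+| + |I^-| = k$ then $I^+ \sqcup I^- = [k]$, and combining $\sigma(I^-) \cap I^+ = \emptyset$ with $I^+ = [k] \setminus I^-$ gives $\sigma(I^-) \subseteq I^-$, which forces $I^- \in \{\emptyset, [k]\}$ since $\sigma$ is a cyclic permutation; this would make $G[A, \{x\}]$ an out-star or in-star, contrary to the hypothesis of (ii). So $|I^+| + |I^-| \le k - 1$ and $e_a(A, \{x\}) = m_1 \le k - 1 \le k - 2 + a$ since $a \ge 1$. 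Part (iii) is the same analysis with $m_2 = 0$ enforced by the no-double-edges hypothesis, and again $e_a(A, \{x\}) = m_1 \le k - 1$.

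For (iv), sum (i) over $u \in A'$ to obtain $e_a(A, A') = \sum_{u \in A'} e_a(A, \{u\}) \le \ell k$. For (v) and (vi), call $u \in A'$ an \emph{in-vertex} if $G[A, \{u\}]$ is an in-star and an \emph{out-vertex} if it is an out-star. If some $u \in A'$ is neither, apply (ii) (respectively (iii)) at $u$ and (i) at the remaining $\ell - 1$ vertices to get $e_a(A, A') \le k\ell - 2 + a$ (respectively $e_a(A, A') \le k\ell - 1$). Otherwise every $u \in A'$ is an in- or out-vertex; writing $C' = u_1 \to \cdots \to u_\ell \to u_1$, if $u_j$ is an in-vertex and $u_{j+1}$ is an out-vertex then $v_1 \to v_2 \to \cdots \to v_{k-1} \to u_j \to u_{j+1} \to v_1$ is a copy of $C_{k+1}$ in $G$ (with $k+1$ distinct vertices since $A \cap A' = \emptyset$), contradicting the hypothesis. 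Hence no (in, out) transition occurs cyclically along $C'$, so either every $u_j$ is an in-vertex (forcing $G[A, A'] = \overrightarrow{K}(A, A')$) or every $u_j$ is an out-vertex (forcing $G[A, A'] = \overrightarrow{K}(A', A)$), each contradicting the hypothesis of (v) and (vi). The main subtlety is this cyclic transition argument, which is where the explicit structure of $C'$ (not merely the size of $A'$) matters; once it is in hand, (v) and (vi) follow cleanly from (i)--(iii).
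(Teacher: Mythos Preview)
Your proof is correct and follows essentially the same approach as the paper. The paper encodes the key observation for (i)--(iii) via the pairs $Q_i=\{v_ix,\,xv_{i+1}\}$ satisfying $|E(G)\cap Q_i|\le 1$, which is exactly your condition $\sigma(I^-)\cap I^+=\emptyset$ in different notation; for (v)--(vi) the paper finds an in-star $y'$ and an out-star $z'$ in $A'$ and splices the subpath of $C'$ from $y'$ to $z'$ (of some length $s$) with a subpath of $C$ of length $k-s-1$, whereas you use the special case $s=1$ by locating a consecutive in/out pair along $C'$ --- both arguments are equivalent and yield the same contradiction.
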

\begin{proof}
Write $C=v_1 v_2\dots v_k$. For $i\in [k]$ let $Q_i:= \{v_i x, x v_{i+1}\}$, where $v_{k+1}:= v_1$. Since $G$ is $C_{k+1}$-free we have that $|E(G)\cap Q_i|\leq 1$ for every $i\in [k]$. Hence $e(A, \{x\})\leq k$. We can now prove {\rm (i)--(vi)}.

\begin{enumerate}[{\rm (i)}]

\item This follows since $e_a(A, \{x\})\leq e(A, \{x\})\leq k$.

\item Suppose that $G[A, \{x\}]$ is not an in-star or an out-star. Note that if for some $j\in [k]$ we have that $E(G)\cap Q_j=\emptyset$ then, since $|E(G)\cap Q_i|\leq 1$ for all $i\in [k]$, $e_a(A, \{x\})\leq e(A, \{x\})\leq k-1\leq k-2+a$ as required. So we may assume that $|E(G)\cap Q_i|=1$ for every $i\in [k]$. Since $G[A, \{x\}]$ is not an in-star or an out-star, there exists some $j\in [k]$ such that $E(G)\cap Q_j=\{xv_{j+1}\}$ and $E(G)\cap Q_{j+1}=\{v_{j+1}x\}$; that is, there exists a double edge in $G[A, \{x\}]$. So since $e(G[A, \{x\}])\leq k$ we have that $e_a(A, \{x\})\leq k-2+a$, as required.

\item Suppose that $G[A, \{x\}]$ is not an in-star or an out-star, and contains no double edges. Just as in the proof of {\rm (ii)}, we have that if $|E(G)\cap Q_i|=1$ for every $i\in [k]$ then there exists a double edge in $G[A, \{x\}]$. So we may assume that for some $j\in [k]$ we have that $E(G)\cap Q_j=\emptyset$. This implies that $e_a(A, \{x\})\leq e(A, \{x\})\leq k-1$, as required.

\item This is immediate from {\rm (i)}.

\item Suppose that $G[A, A']\notin \{ \overrightarrow{K}(A, A'), \overrightarrow{K}(A', A) \}$. We claim that there exists $x\in A'$ such that $G[A, \{x\}]$ is not an in-star or an out-star. Indeed, suppose not. Then since $G[A, A']\notin \{ \overrightarrow{K}(A, A'), \overrightarrow{K}(A', A) \}$, there must exist distinct vertices $y',z'\in A'$ such that $G[A, \{y'\}]$ is an in-star and $G[A, \{z'\}]$ is an out-star. Let $P'$ be the subpath of $C'$ from $y'$ to $z'$. Thus $P'$ has length $s$ for some $1\leq s\leq k-1$. Let $y, z\in A$ be not necessarily distinct vertices such that the subpath $P$ of $C$ from $y$ to $z$ has length $k-s-1$. Then $yPzy'P'z'y$ is a copy of $C_{k+1}$ in $G$, which contradicts the assumption that $G$ is $C_{k+1}$-free. Hence there does exist $x\in A$ such that $G[A, \{x\}]$ is not an in-star or an out-star. So by {\rm (i)} and {\rm (ii)} we have that $e_a(A, A')\leq k\ell-2+a$ as required.

\item This proof is almost identical to that of {\rm (v)}, just using {\rm (iii)} instead of {\rm (ii)}, and so is omitted.
\end{enumerate}
\end{proof}

For $k\in \mathbb{N}$ define $T_{n,k}^+$ (up to isomorphism) to be the digraph on vertex set $[n]$ with all edges ${ij}$ where $i<j$ and all edges ${ji}$ where $i<j$ and $\left\lfloor (i-1)/k \right\rfloor = \left\lfloor (j-1)/k \right\rfloor$. So if $n=sk$ for some $s\in \mathbb{N}$ then $T_{n,k}^+$ is obtained from $T_s$ by blowing up each vertex to a copy of the complete digraph $DK_k$. Note that $T_{n,k}^+$ is $C_{k+1}$-free, and for all $a\in \mathbb{R}$ with $1\leq a\leq 2$ we have that
\begin{equation}\label{extremal number}
e_a(T_{n,k}^+)= \binom{n}{2}+\left\lfloor \frac{n}{k} \right\rfloor \binom{k}{2}(a-1)+ \binom{n-k\left\lfloor \frac{n}{k} \right\rfloor}{2}(a-1).
\end{equation}
We will first show that $T_{n,k}^+$ is an extremal digraph for $C_{k+1}$. The resulting formula for ${\rm ex}_a(n, C_{k+1})$ will be used in the proofs of Lemmas~\ref{cycle stability} and~\ref{cycle stability digraphs}, but we will not refer to $T_{n,k}^+$ itself again. Note that the case $a=2$ of Lemma~\ref{cycle extremal} corresponds to finding the digraph Tur\'an number of $C_{k+1}$, and is already due to H\"aggkvist and Thomassen~\cite{HaTh}.

\begin{lemma}\label{cycle extremal}
Let $a\in \mathbb{R}$ with $1\leq a\leq 2$ and let $k\in \mathbb{N}$. Then
$${\rm ex}_a(n, C_{k+1})=e_a(T_{n,k}^+).$$
\end{lemma}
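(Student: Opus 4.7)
The lower bound is immediate: since $T_{n,k}^+$ is $C_{k+1}$-free, ${\rm ex}_a(n,C_{k+1}) \geq e_a(T_{n,k}^+)$. For the upper bound, the plan is to prove, by nested induction with the outer variable being $k$ and the inner variable being $n$, that every $C_{k+1}$-free digraph $G$ on $n$ vertices satisfies $e_a(G) \leq e_a(T_{n,k}^+)$. The outer base case $k=1$ is trivial: a $C_2$-free digraph is an oriented graph, so $e_a(G) = e(G) \leq \binom{n}{2} = e_a(T_{n,1}^+)$. The inner base case $n \leq k$ is also trivial, since any digraph on $n$ vertices satisfies $e_a(G) \leq a\binom{n}{2} = e_a(T_{n,k}^+)$ (because $T_{n,k}^+ = DK_n$ when $n \leq k$).

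For the inductive step, fix $k \geq 2$ and $n > k$, and split on whether $G$ contains a copy of $C_k$. If $G$ is $C_k$-free, then the outer inductive hypothesis applied with $k-1$ in place of $k$ yields $e_a(G) \leq e_a(T_{n,k-1}^+)$, so it suffices to verify the monotonicity $e_a(T_{n,k-1}^+) \leq e_a(T_{n,k}^+)$. By the formula~(\ref{extremal number}) this reduces to
\[
\lfloor n/(k-1) \rfloor \binom{k-1}{2} + \binom{n-(k-1)\lfloor n/(k-1)\rfloor}{2} \;\leq\; \lfloor n/k\rfloor \binom{k}{2} + \binom{n-k\lfloor n/k\rfloor}{2},
\]
which follows from the convexity identity $\binom{a+b}{2} \geq \binom{a}{2}+\binom{b}{2}$: among integer partitions of $n$ with all parts $\leq k$, the sum $\sum \binom{p_i}{2}$ is maximised by using parts of size $k$ as much as possible, and enlarging the allowed part size can only increase this maximum.

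If instead $G$ contains a copy of $C_k$ on vertex set $A$, then for every $x \in V(G) \setminus A$, Proposition~\ref{cycle proposition}{\rm (i)} gives $e_a(A, \{x\}) \leq k$, hence $e_a(A, V(G)\setminus A) \leq k(n-k)$. Also $e_a(G[A]) \leq e_a(DK_k) = a\binom{k}{2}$, and by the inner inductive hypothesis applied to $G[V(G)\setminus A]$ we get $e_a(G[V(G)\setminus A]) \leq e_a(T_{n-k,k}^+)$. Summing these three contributions yields
\[
e_a(G) \;\leq\; a\binom{k}{2} + k(n-k) + e_a(T_{n-k,k}^+).
\]
A direct computation from~(\ref{extremal number}), noting that $\lfloor n/k\rfloor = \lfloor (n-k)/k\rfloor + 1$ while the remainder $n - k\lfloor n/k\rfloor$ is preserved, gives
\[
e_a(T_{n,k}^+) - e_a(T_{n-k,k}^+) \;=\; \binom{n}{2} - \binom{n-k}{2} + (a-1)\binom{k}{2} \;=\; k(n-k) + a\binom{k}{2},
\]
so $e_a(G) \leq e_a(T_{n,k}^+)$, completing the induction.

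The only genuinely non-routine step is the $C_k$-free case of the inductive step, where one must dispose of $G$ without the extra structure provided by a $C_k$; the outer induction on $k$ and the monotonicity $e_a(T_{n,k-1}^+) \leq e_a(T_{n,k}^+)$ (a convexity-of-$\binom{\cdot}{2}$ argument) together handle this cleanly. Everything else reduces to Proposition~\ref{cycle proposition}{\rm (i)} and the arithmetic identity relating the extremal values of $T_{n,k}^+$ and $T_{n-k,k}^+$.
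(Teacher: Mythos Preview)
Your proof is correct and follows essentially the same approach as the paper: the same nested induction (outer on $k$, inner on $n$), the same case split on whether $G$ contains a $C_k$, the same appeal to Proposition~\ref{cycle proposition}{\rm (i)} to bound $e_a(A,V(G)\setminus A)$, and the same arithmetic reduction via~(\ref{extremal number}). The only difference is that you supply an explicit convexity justification for the monotonicity $e_a(T_{n,k-1}^+)\leq e_a(T_{n,k}^+)$, which the paper simply asserts.
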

\begin{proof}
The proof proceeds by induction on $k$. The case $k=1$ is trivial. So suppose $G$ is a $C_{k+1}$-free digraph on $n$ vertices for some $k>1$. The proof now proceeds by induction on $n$. The cases $n=1,\dots, k$ are trivial. So suppose $n>k$. Note that if $G$ is also $C_k$-free then by our inductive hypothesis on $k$,
$$e_a(G)\leq e_a(T_{n,k-1}^+) \leq e_a(T_{n,k}^+).$$

Otherwise, $G$ contains a copy of $C_k$, say on vertex set $A\subseteq V(G)$. So by Proposition~\ref{cycle proposition}{\rm (i)} we have that $e_a(A, \{x\})\leq k$ for every $x\in V(G)\setminus A$. Hence $e_a(A, V(G)\setminus A)\leq k(n-k)$. Note that by our inductive hypothesis on $n$, $e_a (G[V(G)\setminus A])\leq e_a(T^+_{n-k,k}).$ Hence,
\begin{eqnarray}
e_a(G) \hspace{-0.3cm} & = & \hspace{-0.3cm} e_a(G[V(G)\setminus A])+e_a(A, V(G)\setminus A)+e_a(G[A])\nonumber\\
& \stackrel{(\ref{extremal number})}{ \leq} & \hspace{-0.3cm} \binom{n-k}{2}+\left( \left\lfloor \frac{n-k}{k} \right\rfloor \binom{k}{2}+ \binom{(n-k)-k\left\lfloor \frac{n-k}{k} \right\rfloor}{2} \right) (a-1) + k(n-k) + a\binom{k}{2}\nonumber\\
& = & \hspace{-0.3cm} \binom{n}{2}+\left\lfloor \frac{n}{k} \right\rfloor \binom{k}{2}(a-1)+ \binom{n-k\left\lfloor \frac{n}{k} \right\rfloor}{2}(a-1)=e_a(T_{n,k}^+).\nonumber
\end{eqnarray}
So indeed ${\rm ex}_a(n, C_{k+1})=e_a(T_{n,k}^+),$ as required.
\end{proof}

We will next prove three stability results for $C_{k+1}$-free digraphs. The first will cover the case $1\leq a<2$ (where $a$, as usual, is the parameter in the definition of the weighted size of a digraph) and will be used to prove a structural result on $C_{k+1}$-free oriented graphs. The second covers the case $a=2$ and $k$ odd, and will be used to prove an analogous structural result on $C_{k+1}$-free digraphs for odd $k$. The third covers the case $a=2$ and $k$ even, and will be used to prove a (less restrictive) structural result on $C_{k+1}$-free digraphs for even $k$. The proofs of the first two of these stability results will make use of a result of Chudnovsky, Seymour and Sullivan~\cite{CSS}, which we state below. To do so we need to introduce the following notation. Let $\beta(G)$ denote the number of backwards edges in $G$ with respect to a transitive-optimal ordering of $G$. Let $\gamma(G)$ denote the number of unordered non-adjacent pairs of vertices in $G$; that is unordered pairs $u,v$ of vertices such that ${uv}\notin E(G)$ and ${vu}\notin E(G)$.

\begin{lemma}\cite{CSS}\label{CSS theorem}
Let $G$ be a $\{C_2,C_3\}$-free digraph. Then $\beta(G)\leq \gamma(G)$.
\end{lemma}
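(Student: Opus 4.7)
Since Lemma~\ref{CSS theorem} is quoted from Chudnovsky, Seymour and Sullivan~\cite{CSS}, my plan is to sketch their argument at a high level rather than to reprove it in full. I would proceed by induction on $n := |V(G)|$; the case $n \leq 2$ is immediate, since then $G$ has at most one edge and $\beta(G) = 0 \leq \gamma(G)$.

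For the inductive step, the natural target is to produce a vertex $v \in V(G)$ with
\[
\beta(G) - \beta(G-v) \leq \gamma_v,
\]
where $\gamma_v := n - 1 - d^+(v) - d^-(v)$ is the number of vertices non-adjacent to $v$ (here the $C_2$-freeness is used, so that $N^+(v) \cap N^-(v) = \emptyset$). Given such a $v$, the inductive hypothesis applied to $G - v$ yields $\beta(G-v) \leq \gamma(G-v) = \gamma(G) - \gamma_v$, and adding $\gamma_v$ to both sides closes the induction. A useful preliminary bound is
\[
\beta(G) \leq \beta(G-v) + \min(d^+(v), d^-(v))
\]
for every $v$: I would insert $v$ into a transitive-optimal ordering of $G-v$ and track the number $f(k)$ of new backwards edges as a function of the insertion position $k \in \{0,\dots,n-1\}$, noting that $f(0) = d^-(v)$, $f(n-1) = d^+(v)$, and $|f(k+1) - f(k)| \leq 1$ (each step swaps $v$ past one vertex, changing the count by at most one in absolute value). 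Consequently $\min_k f(k) \leq \min(d^+(v), d^-(v))$, and so it would suffice to locate a vertex with $\min(d^+(v), d^-(v)) \leq \gamma_v$.

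The hard part is that such a vertex need not exist from purely local degree considerations: in a dense $C_3$-free oriented graph the inequality $\min(d^+(v), d^-(v)) \leq n-1-d^+(v)-d^-(v)$ may fail at every $v$, so $C_3$-freeness has to be exploited globally rather than only through its immediate local consequence that no edges go from $N^+(v)$ to $N^-(v)$. The argument in~\cite{CSS} circumvents this by introducing a carefully chosen weighting (equivalently, a potential function) on vertices, whose aggregate behaviour over the whole vertex set forces the desired inequality even when no single vertex satisfies the direct degree condition above. Reconstructing this global averaging step is the technical heart of the CSS proof; once it is in hand, the induction sketched above proceeds cleanly. As the lemma is cited here as an external black box and is used in later sections without modification, I would not attempt an independent proof beyond this sketch.
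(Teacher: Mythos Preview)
The paper does not prove this lemma at all: it is quoted verbatim from~\cite{CSS} and used as a black box in the proofs of Lemmas~\ref{cycle stability} and~\ref{cycle stability digraphs}. You correctly identify this, and your decision not to attempt a full independent proof is entirely appropriate here.

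Your sketch is a reasonable outline of the shape of an inductive argument, and the insertion bound $\beta(G)\le \beta(G-v)+\min(d^+(v),d^-(v))$ is valid. You are also right that the naive local target $\min(d^+(v),d^-(v))\le \gamma_v$ can fail for every vertex, so something more global is needed. I would only caution that your description of the CSS mechanism as ``a carefully chosen weighting (equivalently, a potential function)'' is vague enough that it does not really pin down what they do; if you want to say more than ``cited from~\cite{CSS}'', it would be better either to state their actual key lemma or to simply cite the result without speculation. For the purposes of this paper, a bare citation with no sketch would have sufficed.
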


It is conjectured in~\cite{CSS} that in fact $\beta(G)\leq \gamma(G)/2$ for all $\{C_2,C_3\}$-free digraphs $G$. If true, this would be best possible.

\begin{lemma}[Stability when $a<2$]\label{cycle stability}
Let $a\in \mathbb{R}$ with $1\leq a< 2$ and let $k\in \mathbb{N}$ with $k\geq 2$. Then for all $\varepsilon>0$ there exist $\delta>0$ and $n_0\in \mathbb{N}$ such that every $C_{k+1}$-free digraph $G$ on $n\geq n_0$ vertices with
$$e_a(G)\geq \binom{n}{2}-\delta n^2$$
satisfies $G=T_n \pm \varepsilon n^2$.
\end{lemma}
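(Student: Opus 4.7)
Since $e_a(T_n) = \binom{n}{2}$, the hypothesis places $G$ within $\delta n^2$ of the extremal weighted size, and the plan is a three-stage reduction: first quantitatively bound the double edges and non-adjacent pairs, then pass to a $C_2$-free oriented subgraph, and finally apply the Chudnovsky--Seymour--Sullivan inequality (Lemma~\ref{CSS theorem}) to obtain a transitive ordering with few backward edges.

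For the first stage, Lemma~\ref{cycle extremal} gives $e_2(G) \leq {\rm ex}_2(n, C_{k+1}) = \binom{n}{2} + O_k(n)$. Subtracting the hypothesis $e_a(G) \geq \binom{n}{2} - \delta n^2$ yields
\[
(2-a)\,f_2(G) \;=\; e_2(G) - e_a(G) \;\leq\; \delta n^2 + O_k(n),
\]
so for $n$ large and $a$ fixed in $[1,2)$ we obtain $f_2(G) \leq c_1 \delta n^2$ for some $c_1 = c_1(a,k)$. In turn
\[
\gamma(G) = \binom{n}{2} - f_1(G) - f_2(G) = \binom{n}{2} - e_a(G) + (a-1)\,f_2(G) \leq c_2 \delta n^2,
\]
so $G$ already has very few non-adjacent pairs. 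Let $G_1$ be the oriented subgraph obtained from $G$ by deleting one edge from each double edge; then $\gamma(G_1) = \gamma(G)$ and $G_1$ is both $C_2$-free and $C_{k+1}$-free.

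In the base case $k=2$, $G_1$ is also $C_3$-free (as a subgraph of the $C_3$-free $G$), so Lemma~\ref{CSS theorem} provides a vertex ordering in which $G_1$ has at most $\gamma(G_1) \leq c_2 \delta n^2$ backward edges. Reinserting the $f_2(G)$ deleted edges contributes at most $f_2(G)$ further backward edges, so the total number of edge operations needed to transform $G$ into $T_n$ in this ordering is $\beta(G) + \gamma(G) \leq 2\gamma(G) + f_2(G) = O_a(\delta n^2)$, which is at most $\varepsilon n^2$ for $\delta$ small enough.

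For $k \geq 3$ I would proceed by induction on $k$, with an induction on $n$ nested inside. If $G$ happens to be $C_k$-free the inductive hypothesis applies directly. Otherwise $G$ contains a copy of $C_k$ on some $k$-set $A$; by Proposition~\ref{cycle proposition}{\rm (i)--(iii)}, each $x \in V(G)\setminus A$ for which $G[A,\{x\}]$ is neither an in-star nor an out-star reduces $e_a(A,\{x\})$ by at least $2-a$. Writing $V(G)\setminus A = B^+ \cup B^- \cup B'$ for the partition into out-stars, in-stars, and non-stars, the weighted-edge inequality forces $|B'| = O_a(\delta n^2)$, and the $C_{k+1}$-freeness prohibits any edge from $B^+$ to $B^-$ (otherwise one closes a $C_{k+1}$ by following a vertex of $A$ into $B^+$, across to $B^-$, and around the remaining path of the cycle on $A$), giving the coarse transitive arrangement $B^- < A < B^+$. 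One then verifies that $G[B^+]$ and $G[B^-]$ are themselves $C_{k+1}$-free digraphs with near-maximal weighted size on their own vertex sets, applies the stability result recursively to them, and patches the local orderings on $A, B^+, B^-, B'$ into a single global transitive ordering. The hard part will be the bookkeeping in this recursion: the parameters $\delta$ and $\varepsilon$ must propagate so that the errors accumulated from all recursive levels, together with the $O(k)$ deviation inside $A$ and the contribution from $B'$, stay bounded by $\varepsilon n^2$.
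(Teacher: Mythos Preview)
Your first-stage reduction is correct and in fact cleaner than what the paper does: subtracting $e_a(G)$ from $e_2(G)\le {\rm ex}_2(n,C_{k+1})=\binom{n}{2}+O_k(n)$ immediately gives $f_2(G)=O_{a,k}(\delta n^2)$ and then $\gamma(G)=O_{a,k}(\delta n^2)$. Your base case $k=2$ is also fine and essentially coincides with the paper's (delete one arc from each double edge and apply Lemma~\ref{CSS theorem}).

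The gap is in the inductive step for $k\ge 3$. First, a minor point: with the paper's convention an out-star has $x\to A$, so the correct coarse ordering is $B^+<A<B^-$ and the forbidden direction is $B^-\to B^+$; your description has the labels reversed. More seriously, the inequality you derive only gives $(2-a)|B'|\le \delta n^2+O_k(n)$, and since $\delta$ is a fixed constant this is vacuous as a bound on the vertex set $B'$ (which could have size $\Theta(n)$). You therefore cannot simply set $B'$ aside; these vertices have to be woven into the ordering of $B^+\cup B^-$, and the recursion must account for them. Even ignoring $B'$, the inner induction on $n$ does not close: the absolute deficit $\binom{n}{2}-e_a(G)\le \delta n^2$ passes essentially unchanged to $G[B^+]$ and $G[B^-]$, so the \emph{relative} deficit $\delta$ can double at every level of the recursion, and there is no bound on the recursion depth. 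This is not just bookkeeping; as stated the scheme does not terminate with a uniform $\delta$.

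The paper avoids both problems by pulling out a \emph{maximal} family $C^1,\dots,C^t$ of vertex-disjoint copies of $C_k$ rather than a single one, and contracting each $C^i$ to a point. Proposition~\ref{cycle proposition}(ii),(v) shows that whenever two contracted vertices are non-adjacent the corresponding block pair in $G$ loses at least $2-a$ in weighted size, so the contracted graph $G'$ still satisfies $e_a(G')\ge \binom{|G'|}{2}-O(\delta n^2/(2-a))$. Crucially $G'$ is $\{C_k,C_{k+1}\}$-free (a $C_k$ in $G'$ would expand to a $C_{k+1}$ in $G$), so one applies the hypothesis at level $k-1$ \emph{once}, with no inner recursion on $n$. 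This single contraction step is what replaces your iterated peeling; your first-stage bound on $f_2(G)$, while correct, is not actually needed once the argument is organised this way.
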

\begin{proof}
We prove the lemma via the following claim.

\vspace{0.2cm}
\noindent{\bf Claim:} \textit{Let} $k\in \mathbb{N}$ \textit{with} $k\geq 2$ \textit{and let} $\varepsilon>0$. \textit{Suppose that there exist} $\delta'>0$ \textit{and} $n_0'\in \mathbb{N}$ \textit{such that every} $\{C_k, C_{k+1}\}$-\textit{free digraph} $G$ \textit{on} $n'\geq n_0'$ \textit{vertices with}
$$e_a(G)\geq \binom{n'}{2}-\delta' n'^2$$
\textit{satisfies} $G=T_{n'} \pm \varepsilon n'^2/(2k^2)$. \textit{Then there exist} $\delta>0$ \textit{and} $n_0\in \mathbb{N}$ \textit{such that every} $C_{k+1}$-\textit{free digraph} $G$ \textit{on} $n\geq n_0$ \textit{vertices with}
$$e_a(G)\geq \binom{n}{2}-\delta n^2$$
\textit{satisfies} $G=T_n \pm \varepsilon n^2$.

\vspace{0.2cm}
In order to check that the claim implies the lemma, we proceed by induction on $k$. For the base case $k=2$ the assumption of the claim is satisfied, since if $\delta':= \varepsilon/(4k^2)$ and if $G$ is a $\{C_2, C_{3}\}$-free digraph on $n'$ vertices with $e_a(G)\geq \binom{n'}{2}-\delta' n'^2$ then $\gamma(G)\leq \delta' n'^2$, and so applying Lemma~\ref{CSS theorem} yields the assumption of the claim. So the conclusion of the claim holds, which is precisely the statement of the lemma for $k=2$. For $k>2$ the assumption of the claim is satisfied by the inductive hypothesis (since any $\{C_k, C_{k+1}\}$-free digraph is certainly a $C_k$-free digraph) and so the conclusion of the claim holds, which is precisely the statement of the lemma for $k$. So by induction the lemma holds and we are done.

Thus it remains to prove the claim. (Note that, apart from in the base case $k=2$, in the above argument it would suffice for the assumption in the statement of the claim to refer to $C_k$-free digraphs, rather than $\{C_k, C_{k+1}\}$-free digraphs. As such, this claim is stronger than strictly necessary for our purposes, since the assumption is weaker. However, this approach allows us to prove the base case at the same time as the inductive step, and so yields a shorter proof.)

\vspace{0.2cm}
\noindent{\bf Proof of claim:} Choose $\delta$ and $n_0$ such that $1/n_0 \ll \delta \ll 1/k,2-a, \delta'$ and $1/n_0 \ll 1/n_0', \varepsilon$. Let $G$ be a $C_{k+1}$-free digraph on $n\geq n_0$ vertices with
\begin{equation}\label{tilde edges lower bound}
e_a(G)\geq \binom{n}{2}-\delta n^2.
\end{equation}

Let $t\geq 0$ denote the maximum number of vertex-disjoint copies of $C_k$ in $G$. Let $\mathcal{C}=\{C^{1},\dots, C^{t} \}$ be a set of $t$ vertex-disjoint copies of $C_k$ in $G$. Let $V_1:= V(C^{1})\cup \dots \cup V(C^{t})$ and $V_2:= V(G)\setminus V_1$. Let $n_1:= |V_1|$ and $n_2:= |V_2|$. Note that $G[V_2]$ is $C_k$-free.

Note that Proposition~\ref{cycle proposition}{\rm (i)} implies that $e_a(V_1, V_2)\leq n_1 n_2$, since $G$ is $C_{k+1}$-free. Also, for $i=1,2$, since $G[V_i]$ is $C_{k+1}$-free, Lemma~\ref{cycle extremal} and (\ref{extremal number}) together imply that
$$e_a(G[V_i])\leq {\rm ex}_a(n_i, C_{k+1}) =\binom{n_i}{2}+\left\lfloor \frac{n_i}{k} \right\rfloor \binom{k}{2}(a-1) + \binom{n_i-k\left\lfloor \frac{n_i}{k} \right\rfloor}{2}(a-1) \leq \binom{n_i}{2}+\delta n^2.$$
(The last inequality holds since $1/n_0 \ll \delta \ll 1/k$.) Together with (\ref{tilde edges lower bound}) this implies that
\begin{align}\label{stability equation 1}
e_a(G[V_1])&= e_a(G) - e_a(V_1, V_2) - e_a(G[V_2])\\
&\geq \left( \binom{n}{2} - \delta n^2 \right) - n_1 n_2 - \left( \binom{n_2}{2} + \delta n^2 \right) = \binom{n_1}{2} - 2 \delta n^2,\nonumber
\end{align}
and similarly that\COMMENT{\begin{align*}
e_a(G[V_2])&= e_a(G) - e_a(V_1, V_2) - e_a(G[V_1]) \geq \left( \binom{n}{2} - \delta n^2 \right) - n_1 n_2 - \left( \binom{n_1}{2} + \delta n^2 \right)\\
&= \binom{n_2}{2} - 2 \delta n^2,
\end{align*}}
\begin{equation}\label{stability equation 2}
e_a(G[V_2])= e_a(G) - e_a(V_1, V_2) - e_a(G[V_1]) \geq \binom{n_2}{2} - 2 \delta n^2,
\end{equation}
and that\COMMENT{\begin{align*}
e_a(V_1, V_2)&= e_a(G) - e_a(G[V_1]) - e_a(G[V_2]) \\
&\geq \left( \binom{n}{2} - \delta n^2 \right) - \left( \binom{n_1}{2} + \delta n^2 \right) - \left( \binom{n_2}{2} + \delta n^2 \right)\\
&= n_1 n_2 - 3 \delta n^2.
\end{align*}}
\begin{equation}\label{stability equation 3}
e_a(V_1, V_2)= e_a(G) - e_a(G[V_1]) - e_a(G[V_2]) \geq n_1 n_2 - 3 \delta n^2.
\end{equation}

We now consider the digraph $G'$ defined on vertex set $[t]\cup V_2$ as follows. Firstly, $G'[V_2]:=G[V_2]$. For vertices $i,j\in [t]$ we have that ${ij}\in E(G')$ if and only if $G[V(C^i), V(C^j)] = \overrightarrow{K}(V(C^i), V(C^j))$. For a vertex $x\in V_2$ and an element $i\in [t]$ we have that ${ix}\in E(G')$ if and only if $G[V(C^{i}), \{x\}]$ is an in-star and that ${xi}\in E(G')$ if and only if $G[V(C^{i}), \{x\}]$ is an out-star.

Note that by Proposition~\ref{cycle proposition}(iv), $e_a(G[V(C^i), V(C^j)])\leq k^2$ (for all $i\ne j$). Moreover, Proposition~\ref{cycle proposition}(v) implies that if $i,j\in [t]$ and ${ij}, {ji}\notin E(G'[[t]])$ then $e_a(G[V(C^{i}), V(C^{j})])\leq k^2-2+a$. Let $s:= \binom{t}{2}-e_a(G'[[t]])=\binom{t}{2}-e(G'[[t]])$. Then
\begin{eqnarray}\label{reference align}
\binom{n_1}{2}-2\delta n^2 \hspace{-0.3cm} & \stackrel{(\ref{stability equation 1})}{\leq} & \hspace{-0.3cm} e_a(G[V_1])=\sum\limits_{\substack{i,j\in [t]\\ i<j}} e_a(G[V(C^i), V(C^j)]) + \sum\limits_{i\in [t]} e_a(G[V(C^i)])\\
& \leq & \hspace{-0.3cm} \binom{n_1}{2} - s(2-a) + t\binom{k}{2}(a-1)\nonumber.
\end{eqnarray}
Thus $s(2-a)\leq 3\delta n^2$, i.e.
$$e_a(G'[[t]])\geq \binom{t}{2}- \frac{3\delta n^2} {2-a}.$$
Similarly, Proposition~\ref{cycle proposition}(ii) implies that if $i\in [t]$ and $x\in V_2$ and ${ix}, {xi}\notin E(G'[[t]])$ then $e_a(G[V(C^{i}), \{x\}])\leq k-2+a$. So we have that
$$e_a(G'[[t], V_2]) \stackrel{(\ref{stability equation 3})}{\geq} tn_2 - \frac{3\delta n^2}{2-a}.$$
So recalling that $e_a(G'[V_2]) = e_a(G[V_2])$ we have that
\begin{equation}\label{te G' equation}
e_a(G')=e_a(G'[[t]]) + e_a(G'[V_2]) + e_a(G'[[t], V_2]) \stackrel{(\ref{stability equation 2})}{\geq} \binom{t+n_2}{2} - \frac{8\delta n^2}{2-a}.
\end{equation}
Since $t+n_2\geq n/k$ we have that $t+n_2\geq n_0'$ and that $8\delta n^2/(2-a) \leq \delta' (t+n_2)^2$, and hence by (\ref{te G' equation}) that
\begin{equation}\label{te G' equation 2}
e_a(G')\geq \binom{t+n_2}{2} - \delta' (t+n_2)^2.
\end{equation}

We now claim that $G'$ must be $\{C_k, C_{k+1}\}$-free. Indeed, suppose not. If $G'$ contains a copy of $C_{k+1}$ then it is clear that $G$ also contains a copy of $C_{k+1}$, contradicting our assumption. So we may assume that $G'$ contains a copy of $C_k$. Since $G'[V_2]=G[V_2]$ is $C_k$-free by construction, the vertex set of any copy of $C_k$ in $G'$ must contain some $i\in[t]$. But then $G$ would clearly contain a copy of $C_{k+1}$ using two of the vertices in $V(C^{i})$, again contradicting our assumption that $G$ is $C_{k+1}$-free. So $G'$ is $\{C_k, C_{k+1}\}$-free, as claimed.

Thus by~(\ref{te G' equation 2}) and the assumption in the statement of the claim we have that\COMMENT{Note that we have the factor of $2$ in the denominator of the error term here to account for the fact that, when going from the number of `bad' edges in $G'$ to the number in $G$, we have no control over the edges inside the vertex sets $V(C^1),\dots, V(C^t)$, so we have to assume that they could all be `bad' too.} $G'=T_{t+n_2} \pm \varepsilon (t+n_2)^2/(2k^2)$. Together with the definition of $G'$ this implies that $G = T_n \pm \varepsilon n^2$, as required. This completes the proof of the claim, and hence completes the proof of the lemma.
\end{proof}

The rough strategy of the next proof is similar to that of Lemma~\ref{cycle stability}.

\begin{lemma}[Stability when $a=2$ and $k$ is odd]\label{cycle stability digraphs}
Let $k\in \mathbb{N}$ with $k\geq 3$ and $k$ odd. Then for all $\varepsilon>0$ there exist $\delta>0$ and $n_0\in \mathbb{N}$ such that every $C_{k+1}$-free digraph $G$ on $n\geq n_0$ vertices with
$$e(G)\geq \binom{n}{2}-\delta n^2$$
satisfies $G=T_n \pm \varepsilon n^2$.
\end{lemma}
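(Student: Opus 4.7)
The strategy is to reduce to Lemma~\ref{cycle stability}, applied with some $a\in [1,2)$, by showing that the number $f_2(G)$ of unordered pairs supporting a double edge of $G$ is $o(n^2)$. This reduction is the place where the assumption that $k$ is odd is used essentially; the proof is therefore shorter than (though inspired by) the strategy of Lemma~\ref{cycle stability}.

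The key observation is that the undirected graph $D$ on $V(G)$ whose edges are the double edges of $G$ (so $|E(D)|=f_2(G)$) is $C_{k+1}$-free as an undirected graph: an undirected $(k+1)$-cycle $v_1 v_2\cdots v_{k+1}$ in $D$ would lift, by orienting consistently, to the directed cycle $v_1\to v_2\to\cdots\to v_{k+1}\to v_1$ in $G$, contradicting the $C_{k+1}$-freeness of $G$. Since $k$ is odd, $k+1\geq 4$ is even, so the Bondy--Simonovits even-cycle theorem yields $f_2(G)=|E(D)|=O(n^{1+2/(k+1)})=o(n^2)$.

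Given $\varepsilon>0$, I would apply Lemma~\ref{cycle stability} with $a:=3/2$ and this $\varepsilon$ to obtain $\delta^*>0$ and $n_0^*\in\mathbb{N}$ such that every $C_{k+1}$-free digraph $H$ on $n\geq n_0^*$ vertices with $e_{3/2}(H)\geq \binom{n}{2}-\delta^* n^2$ satisfies $H=T_n\pm \varepsilon n^2$. Set $\delta:=\delta^*/2$ and choose $n_0\geq n_0^*$ large enough that the Bondy--Simonovits bound guarantees $f_2(G)\leq \delta^* n^2$ for every $C_{k+1}$-free digraph on $n\geq n_0$ vertices. Then, using $e(G)=e_2(G)\geq \binom{n}{2}-\delta n^2$,
\[
e_{3/2}(G) \;=\; e_2(G)-\tfrac{1}{2}f_2(G) \;\geq\; \binom{n}{2}-\delta n^2-\tfrac{1}{2}\delta^* n^2 \;=\; \binom{n}{2}-\delta^* n^2,
\]
and Lemma~\ref{cycle stability} gives $G=T_n\pm \varepsilon n^2$, as required.

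The main obstacle is the subquadratic bound on $f_2(G)$, where the hypothesis that $k$ is odd (equivalently $k+1$ is even) is essential: for odd $k+1$ the undirected Tur\'an number of $C_{k+1}$ is $\Theta(n^2)$ (e.g.\ a balanced complete bipartite graph is $C_{k+1}$-free), so the reduction breaks down. This is consistent with the dichotomy between even and odd $k$ visible in Theorem~\ref{C_k free main theorem}{\rm (ii)}.
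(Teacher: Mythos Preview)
Your proof is correct and takes a genuinely different, more economical route than the paper's.

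The paper proves Lemma~\ref{cycle stability digraphs} by re-running an induction in the style of Lemma~\ref{cycle stability}: it strips off a maximal collection of vertex-disjoint copies of $C_k$, then of $C_{k-1}$, builds an auxiliary contracted digraph $G'$, checks that $G'$ is $\{C_{k-1},C_k\}$-free, and invokes an inductive hypothesis (with Lemma~\ref{CSS theorem} as the base). The bound on double edges that you isolate as the key step does appear inside the paper's proof (phrased via the K\H{o}v\'ari--S\'os--Tur\'an theorem rather than Bondy--Simonovits, but the point is the same: the double-edge graph avoids the bipartite graph $C_{k+1}$ and hence has $o(n^2)$ edges), but the paper uses it only locally, to control the loss in Proposition~\ref{cycle proposition}{\rm (vi)} versus~{\rm (v)}, and then carries out the full structural argument anew.

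Your observation is that this bound already does all the work: once $f_2(G)=o(n^2)$, passing from $e_2(G)$ to $e_{3/2}(G)$ costs only $o(n^2)$, so the hypothesis of Lemma~\ref{cycle stability} (which is stated for digraphs and any $a<2$) is met and one is done. This is a clean and legitimate reduction; the paper's longer argument gains nothing here, since Lemma~\ref{cycle stability} is already available. The parity hypothesis enters in exactly the same way in both proofs, namely to force the forbidden cycle in the double-edge graph to be of even length.
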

\begin{proof}
We prove the lemma via the following claim.

\vspace{0.2cm}
\noindent{\bf Claim:} \textit{Let} $k\in \mathbb{N}$ \textit{with} $k\geq 3$ \textit{and} $k$ \textit{odd, and let} $\varepsilon>0$. \textit{Suppose that there exist} $\delta'>0$ \textit{and} $n_0'\in \mathbb{N}$ \textit{such that every} $\{C_{k-1}, C_k\}$-\textit{free digraph} $G$ \textit{on} $n'\geq n_0'$ \textit{vertices with}
$$e(G)\geq \binom{n'}{2}-\delta' n'^2$$
\textit{satisfies} $G=T_{n'} \pm \varepsilon n'^2/(2k^2)$. \textit{Then there exist} $\delta>0$ \textit{and} $n_0\in \mathbb{N}$ \textit{such that every} $C_{k+1}$-\textit{free digraph} $G$ \textit{on} $n\geq n_0$ \textit{vertices with}
$$e(G)\geq \binom{n}{2}-\delta n^2$$
\textit{satisfies} $G=T_n \pm \varepsilon n^2$.

\vspace{0.2cm}
In order to check that the claim implies the lemma, we proceed by induction on $\ell:=(k+1)/2$. The argument is similar to that in the proof of Lemma~\ref{cycle stability}. (As before, Lemma~\ref{CSS theorem} implies that in the base case $\ell=2$ of the induction, the assumption of the claim holds.)\COMMENT{For the base case $\ell=2$ the assumption of the claim is satisfied, since if $G$ is a $\{C_2, C_{3}\}$-free digraph on $n'$ vertices with $e(G)\geq \binom{n'}{2}-\delta' n'^2$ then $\gamma(G)\leq \delta' n'^2$, and so applying Lemma~\ref{CSS theorem} yields the result if $\delta'$ is sufficiently small. So the conclusion of the claim holds, which is precisely the statement of the lemma for $k=3$. For $\ell>2$ the assumption of the claim is satisfied by the inductive hypothesis (since any $\{C_{k-1}, C_k\}$-free digraph is certainly a $C_{k-1}$-free digraph) and so the conclusion of the claim holds, which is precisely the statement of the lemma for $k$. So by induction the lemma holds and we are done.
Thus it remains to prove the claim. (Note that, similarly to in the proof of Lemma~\ref{cycle stability}, this claim is stronger than necessary, but allows for a shorter proof method than otherwise.)}

\vspace{0.2cm}
\noindent{\bf Proof of claim:} Choose $\delta$ and $n_0$ such that $1/n_0 \ll \delta \ll 1/k, \delta'$ and $1/n_0 \ll 1/n_0', \varepsilon$. Let $G$ be a $C_{k+1}$-free digraph on $n\geq n_0$ vertices with
\begin{equation}\label{tilde edges lower bound digraphs}
e(G)\geq \binom{n}{2}-\delta n^2.
\end{equation}

Let $t\geq 0$ denote the maximum number of vertex-disjoint copies of $C_k$ in $G$. Let $\mathcal{C}=\{C^{1},\dots, C^{t} \}$ be a set of $t$ vertex-disjoint copies of $C_k$ in $G$. Let $V_1:= V(C^{1})\cup \dots \cup V(C^{t})$ and $n_1:= |V_1|$. Now let $t^*\geq 0$ denote the maximum number of vertex-disjoint copies of $C_{k-1}$ in $G[V\setminus V_1]$. Let $\mathcal{C}^*=\{C^{1}_*,\dots, C^{t^*}_* \}$ be a set of $t^*$ vertex-disjoint copies of $C_{k-1}$ in $G[V\setminus V_1]$. Let $V_2:= V(C^{1}_*)\cup \dots \cup V(C^{t^*}_*)$ and $n_2:= |V_2|$. Let $V_3:= V(G)\setminus (V_1 \cup V_2)$ and $n_3:= |V_3|$. Note that $G[V_2\cup V_3]$ is $C_k$-free and that $G[V_3]$ is $\{C_{k-1}, C_k\}$-free.

Proposition~\ref{cycle proposition}{\rm (i)} implies that $e(V_1, V_2)\leq n_1 n_2$ and that $e(V_1, V_3)\leq n_1 n_3$, since $G$ is $C_{k+1}$-free, and that $e(V_2, V_3)\leq n_2 n_3$, since $G[V_2\cup V_3]$ is $C_{k}$-free. Also, similarly to the proof of Lemma~\ref{cycle stability} we use Lemma~\ref{cycle extremal} to get that $e(G[V_i])\leq \binom{n_i}{2}+\delta n^2$ for $i=1,2,3$. Together with (\ref{tilde edges lower bound digraphs}) this implies that
\begin{align*}
e(G[V_1])&= e(G) - e(V_1, V_2) - e(V_1, V_3) - e(V_2, V_3) - e(G[V_2]) - e(G[V_3])\\
&\geq \left( \binom{n}{2} - \delta n^2 \right) - n_1 n_2 - n_1 n_3 - n_2 n_3 - \left( \binom{n_2}{2} + \delta n^2 \right) - \left( \binom{n_3}{2} + \delta n^2 \right)\\
&= \binom{n_1}{2} - 3 \delta n^2,
\end{align*}
and similarly that $e(G[V_2]) \geq \binom{n_2}{2} - 3 \delta n^2$, and that $e(G[V_3]) \geq \binom{n_3}{2} - 3 \delta n^2,$ and that $e(V_i, V_j)\geq n_i n_j - 4 \delta n^2$ for all $i, j\in \{1,2,3\}$, $i\ne j$.

We now consider the digraph $G'$ defined on vertex set $([t] \times \{0\})\cup ([t^*] \times \{1\})\cup V_3$ as follows. Firstly, for every vertex $v\in V(G')$ define $f(v)$ to be $\{v\}$ if $v\in V_3$, to be $V(C^{i})$ if $v=(i, 0)\in [t]\times \{0\}$, and to be $V(C^{i}_*)$ if $v=(i, 1)\in [t^*] \times \{1\}$. Now let $G'[V_3]:= G[V_3]$ and for vertices $u, v\in V(G')$ with $|\{u,v\}\cap V_3|\leq 1$ define $uv\in E(G')$ if and only if $G[f(u), f(v)] = \overrightarrow{K}(f(u), f(v))$.

Note that by the K\H{o}v\'ari-S\'os-Tur\'an theorem, $G$ contains at most $\delta n^2$ double edges, since $G$ is $C_{k+1}$-free and $k+1$ is even, and $1/n_0 \ll \delta \ll 1/k$. Note also that by Proposition~\ref{cycle proposition}{\rm (iv)}, if $u,v\in [t] \times \{0\}$ then $e(G[f(u), f(v)])\leq k^2$. If, moreover, $uv, vu \notin E(G'[[t] \times \{0\}])$ and $G[f(u), f(v)]$ contains no double edge, then by Proposition~\ref{cycle proposition}{\rm (vi)} we have that $e(G[f(u), f(v)])\leq k^2-1$. Using that $e(G[V_1])\geq \binom{n_1}{2} - 3 \delta n^2$ one can now argue similarly as in~(\ref{reference align}) to see that\COMMENT{Let $s:= \binom{t}{2} - e(G'[[t] \times \{0\}])$. Then
\begin{align*}
\binom{n_1}{2} - 3\delta n^2 & \leq e(G[V_1]) = \sum\limits_{\substack{i,j\in [t]\\ i<j}} e(G[V(C^i), V(C^j)]) + \sum\limits_{i\in [t]} e(G[V(C^i)])\\
&\leq \binom{n_1}{2} - (s-\delta n^2) + t\binom{k}{2},
\end{align*}
and hence $s\leq 5\delta n^2$, as required. The $(s-\delta n^2)$ term above is due to the fact that, of the $s$ pairs of indices $i,j$ that correspond to non-edges in $G'[[t] \times \{0\}]$, at most $\delta n^2$ of them are such that $G[V(C^i), V(C^j)]$ contains a double edge, since we observed that $G$ contains at most $\delta n^2$ double edges (note that in this case we still have that $e(G[V(C^i), V(C^j)])\leq k^2$), and so there are at least $(s-\delta n^2)$ pairs remaining that are such that $e(G[V(C^i), V(C^j)])\leq k^2-1$.}
$$e(G'[[t] \times \{0\}]) \geq \binom{t}{2} - 5\delta n^2.$$
Also, Proposition~\ref{cycle proposition}{\rm (iv)} implies that if $u\in [t] \times \{0\}$ and $v\in [t^*] \times \{1\}$ then $e(G[f(u), f(v)])\leq k(k-1)$. If, moreover, ${uv}, {vu}\notin E(G'[[t] \times \{0\}, [t^*] \times \{1\}])$ and $G[f(u), f(v)]$ contains no double edge, then by Proposition~\ref{cycle proposition}{\rm (vi)} we have that $e(G[f(u), f(v)])\leq k(k-1)-1$. Using that $e(V_1, V_2)\geq n_1 n_2 - 4 \delta n^2$ one can again argue similarly as in~(\ref{reference align}) to see that
$$e(G'[[t] \times \{0\}, [t^*] \times \{1\}]) \geq t t^* - 5\delta n^2.$$
Furthermore, Proposition~\ref{cycle proposition}{\rm (i)} implies that if $u\in [t] \times \{0\}$ and $v\in V_3$ then $e(G[f(u), f(v)])\leq k$. If, moreover, ${uv}, {vu}\notin E(G'[[t] \times \{0\}, V_3])$ and $G[f(u), f(v)]$ contains no double edge, then by Proposition~\ref{cycle proposition}{\rm (iii)} we have that $e(G[f(u), f(v)])\leq k-1$. Using that $e(V_1, V_3)\geq n_1 n_3 - 4 \delta n^2$ one can again argue similarly as in~(\ref{reference align}) to see that
$$e(G'[[t] \times \{0\}, V_3]) \geq t n_3 - 5\delta n^2.$$

Using that $G[V_2\cup V_3]$ is $C_k$-free, in a similar way we get that $e(G'[[t^*] \times \{1\}])\geq \binom{t^*}{2} - 5\delta n^2$ and that $e(G'[[t^*] \times \{1\}, V_3])\geq t^* n_3 - 5\delta n^2$. So recalling that $e(G'[V_3]) = e(G[V_3])\geq \binom{n_3}{2} - 3 \delta n^2$ we have that
\begin{align}\label{e_a(G') bound}
e(G')= &\hspace{0.1cm} e(G'[[t] \times \{0\}]) + e(G'[[t^*] \times \{1\}]) + e(G'[V_3])\\
&+ e(G'[[t] \times \{0\}, [t^*] \times \{1\}]) + e(G'[[t] \times \{0\}, V_3]) + e(G'[[t^*] \times \{1\}], V_3)\nonumber\\
\geq &\hspace{0.05cm} \binom{t+t^*+n_3}{2} - 28 \delta n^2\nonumber.
\end{align}

Since $t + t^* +n_3\geq n/k$, we have that $t + t^* +n_3\geq n_0'$ and that $28 \delta n^2 \leq \delta' (t + t^* +n_3)^2$, and hence by (\ref{e_a(G') bound}) that
\begin{equation}\label{te G' equation digraphs 2}
e(G')\geq \binom{t + t^* +n_3}{2} - \delta' (t + t^* +n_3)^2.
\end{equation}

We now claim that $G'$ must be $\{C_{k-1}, C_k\}$-free. Indeed, suppose not. If $G'$ contains a copy of $C_k$ then since $G'[V_3]=G[V_3]$ is $C_k$-free by construction, the vertex set of such a copy of $C_k$ in $G'$ must contain some $u\in([t] \times \{0\})\cup ([t^*] \times \{1\})$. But then $G$ would clearly contain a copy of $C_{k+1}$ using two of the vertices in $f(u)$, contradicting our assumption that $G$ is $C_{k+1}$-free. Similarly, if $G'$ contains a copy of $C_{k-1}$ then since $G[V_3]$ is $C_{k-1}$-free by construction, the vertex set of such a copy of $C_{k-1}$ in $G'$ must contain some $u\in([t] \times \{0\})\cup ([t^*] \times \{1\})$. If there exists such a $u\in [t] \times \{0\}$ then $G$ would clearly contain a copy of $C_{k+1}$ using three of the vertices in $f(u)$, since $|f(u)|=k\geq 3$. Otherwise, there exists $u\in [t^*] \times \{1\}$ and a copy of $C_{k-1}$ in $G'$ that uses $u$ but no vertices in $[t] \times \{0\}$. But then $G[V_2\cup V_3]$ would clearly contain a copy of $C_k$ using two of the vertices in $f(u)$, contradicting our previous observation that $G[V_2\cup V_3]$ is $C_k$-free.

So $G'$ is $\{C_{k-1}, C_k\}$-free, as claimed. Thus by~(\ref{te G' equation digraphs 2}) and the assumption in the statement of the claim we have that $G'=T_{t + t^* +n_3} \pm \varepsilon (t + t^* +n_3)^2/(2k^2)$. Together with the definition of $G'$ this implies that $G = T_n \pm \varepsilon n^2$, as required. This completes the proof of the claim, and hence completes the proof of the lemma.
\end{proof}

We now prove a digraph stability result for forbidden odd cycles.
Here both the Tur\'an and the stability results allow for a richer structure than in the previous two lemmas:
For even $k$,
a near extremal graph can be obtained from a transitive tournament by blowing up some of its vertices into
complete bipartite graphs of arbitrary size (see Section~\ref{Section: Introduction} for the precise definition).
This makes the proof more difficult than the previous two.

\begin{lemma}[Stability when $a=2$ and $k$ is even]\label{odd cycle stability}
Let $k\in \mathbb{N}$ be even. Then for all $\varepsilon>0$ there exist $\delta>0$ and $n_0\in \mathbb{N}$ such that every $C_{k+1}$-free digraph $G$ on $n\geq n_0$ vertices with
\begin{equation}\label{G close to complete}
e(G)\geq \binom{n}{2}-\delta n^2
\end{equation}
can be made into a transitive-bipartite blow up by changing at most $\varepsilon n^2$ edges.
\end{lemma}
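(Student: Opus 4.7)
The plan is to isolate the bipartite blow-up structure coming from the double edges of $G$, contract it away, and then apply the weighted stability result of Lemma~\ref{cycle stability} to the resulting (essentially oriented) digraph. Throughout, let $G_{\mathrm{sym}}$ denote the undirected graph on $V(G)$ whose edges are the pairs $\{u,v\}$ with $uv, vu$ both in $E(G)$. The first and main step is to show that, after changing at most $\varepsilon n^2/3$ edges of $G$, the graph $G_{\mathrm{sym}}$ becomes a disjoint union of (nearly) complete balanced bipartite graphs $B_1,\dots, B_t$; these will become the non-trivial blow-up parts of the target transitive-bipartite structure. Approximate bipartiteness of $G_{\mathrm{sym}}$ rests on the fact that $k+1$ is odd: any undirected odd cycle of length $\ell\le k+1$ in $G_{\mathrm{sym}}$ lifts (via the double edges) to a directed $C_\ell$ in $G$ in either cyclic direction, and combining such a short odd cycle with a directed path of length $k+1-\ell$ in $G$ joining two of its vertices produces a forbidden $C_{k+1}$. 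Since $e(G)\ge \binom{n}{2}-\d n^2$, a standard moment/counting argument guarantees that such connecting paths exist between essentially every ordered pair of vertices, so $G_{\mathrm{sym}}$ has very few short odd cycles, and a stability version of the classical undirected odd-cycle theorem (in the spirit of Lamken--Rothschild) lets me delete $o(n^2)$ edges of $G_{\mathrm{sym}}$ to render it bipartite.

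Upgrading the bipartite $G_{\mathrm{sym}}$ to a disjoint union of complete balanced bipartite components is then forced by the near-extremality of $G$: any missing double edge inside a putative part, or any unbalance between the two sides of a part, can only be ``compensated'' in $e(G)$ by edges that contribute at most $2$ each to $e(G)$, and this slack is bounded by $O(\d n^2)$. Similarly, non-trivial double edges crossing between two would-be components would force, via the same odd-cycle extension argument, forbidden copies of $C_{k+1}$ and hence must be few in number. Once $G_{\mathrm{sym}}$ is decomposed as $B_1,\dots, B_t$ plus a set $S$ of singleton vertices, I form the reduced digraph $G'$ on vertex set $\{b_1,\dots, b_t\}\cup S$ by contracting each $B_i$ to a super-vertex $b_i$ and joining two vertices of $G'$ by the directed edge giving the majority direction of edges between the corresponding blocks in $G$. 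By construction $G'$ contains essentially no double edges, and any $C_{k+1}$ in $G'$ would lift (via the near-complete bipartite structure inside each $B_i$) to a $C_{k+1}$ in $G$, so $G'$ is $C_{k+1}$-free. A bookkeeping argument comparing $e(G)$ with the internal contribution of the $B_i$'s and the external contribution to $G'$ yields $e_a(G')\ge \binom{v(G')}{2}-\d'\, v(G')^2$ for $a=\log 3$ and some small $\d'$; applying Lemma~\ref{cycle stability} then gives $G'=T_{v(G')}\pm\varepsilon'\, v(G')^2$, and lifting this back through the contraction shows that $G$ differs from a transitive-bipartite blow-up on the parts $B_1,\dots, B_t,\{s\}_{s\in S}$ by at most $\varepsilon n^2$ edges.

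The main obstacle is the first step, namely the extraction of the complete balanced bipartite components from $G_{\mathrm{sym}}$. Approximate bipartiteness is manageable via the odd-cycle extension trick, but upgrading it to the precise ``disjoint complete balanced bipartite'' structure requires a careful balancing of the $C_{k+1}$-free constraint against the near-extremal edge count, and in particular a proof that essentially all edges between different components of $G_{\mathrm{sym}}$ are ``oriented uniformly'' in $G$. Unlike Lemmas~\ref{cycle stability} and~\ref{cycle stability digraphs}, no direct induction on $k$ is available here, since the target structure is itself $k$-dependent only through parity; instead the analysis must be carried out in a single structural pass, and the interplay with Lemma~\ref{cycle stability} (via the reduced $G'$) needs to be arranged so as to avoid any circular dependency between cleaning up $G_{\mathrm{sym}}$ and determining the orientation of inter-component edges.
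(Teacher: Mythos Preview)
Your high-level plan coincides with the paper's: isolate the ``bipartite'' part of $G$ coming from double edges, contract it, and then feed the resulting essentially oriented digraph into Lemma~\ref{cycle stability}. The paper does exactly this, and its endgame (a contracted graph $J$, shown to be close to a transitive tournament via Lemma~\ref{cycle stability}) is your endgame as well.

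Where you diverge is in how the bipartite structure is extracted, and this is where your proposal has a genuine gap. You work with $G_{\mathrm{sym}}$ directly and assert that near-extremality forces it to be close to a disjoint union of complete balanced bipartite graphs. You yourself flag this as ``the main obstacle'', and indeed none of the three required facts---approximate bipartiteness, approximate completeness of components, and approximate balance---is actually proved. Your odd-cycle extension sketch for bipartiteness is plausible but imprecise (which two vertices of the short odd cycle do you join, and why does the resulting closed walk have length exactly $k+1$ rather than merely containing a short cycle?), and for completeness and balance you offer only ``forced by near-extremality'', which is not an argument. Moreover, once the $B_i$ are only \emph{approximately} complete bipartite, your claim that a $C_{k+1}$ in $G'$ lifts to a $C_{k+1}$ in $G$ is no longer automatic: the required internal path through a block $B_i$ may hit one of the $o(n^2)$ missing edges.

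The paper sidesteps all of this by working not with individual double edges but with a maximal family of vertex-disjoint copies of $DK_{k/2,k/2}$. Proposition~\ref{cycle proposition 2} then gives exact structural information: two such blocks interact in one of only three ways (all edges one direction, all edges the other, or they merge into a $DK_{k,k}$). This trichotomy is encoded as a \emph{semi-oriented} auxiliary graph with blue directed edges and red undirected edges; the red edges are shown to organise into large near-cliques, each of which corresponds in $G$ to a single large near-$DK_{m,m}$ (Claim~6). Only after this structural work is the contraction performed and Lemma~\ref{cycle stability} invoked. The $DK_{k/2,k/2}$-block viewpoint is what makes the ``upgrading'' step you identify as the obstacle actually go through; your $G_{\mathrm{sym}}$ viewpoint would need a comparable structural lemma, and you have not supplied one.
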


To give an idea of the proof, consider the triangle-free case $k=2$. In this case, we first consider a maximal collection $\mathcal{A}$ of disjoint double edges in $G$. 
It is easy to see that for almost all pairs of (double) edges $u_1u_2,v_1v_2 \in \mathcal{A}$,
either (i) $G[\{u_1,u_2,v_1,v_2\}]$ is a complete balanced bipartite digraph 
or (ii) $G$ contains all four possible edges from $\{u_1,u_2\}$ to $\{v_1,v_2\}$ (or vice versa).
We consider the following auxiliary `semi-oriented graph' $G'$  whose vertex set is $\mathcal{A}$.
In case (i), we include an (undirected) red edge between $u_1u_2$ and $v_1v_2$ in $G'$.
In case (ii), we include a blue edge directed from $u_1u_2$ to $v_1v_2$ in $G'$ (or vice versa).
One can now show  that the red edges induce a set of disjoint almost complete graphs $R$ in $G'$.
We then contract each such red almost complete graph $R$ into a vertex $v_R$ to obtain an oriented graph $J$
(vertices of $G'$ which are not involved in any of these $R$ are also retained in $J$).
So all edges of $J$ are blue.
Crucially, it turns out that $J$ is close to a transitive tournament.
Moreover, in $G$ each $v_R$ corresponds to an almost complete bipartite digraph, so altogether this shows 
that the subgraph of $G$ induced by the edges in $\mathcal{A}$ is close to a transitive-bipartite blow up. 
One can generalize this argument to incorporate the vertices of $G$ not covered by edges in $\mathcal{A}$
(these will only be incident to blue edges in $G'$ and $J$ and not to any red ones).

To formalize the above argument, we make use of the following definitions. A \textit{semi-oriented graph} is obtained from an undirected graph by first colouring each of the edges either red or blue and then giving an orientation to each of the blue edges. So a semi-oriented graph is a pair $G=(V,E)$, where $V$ is a set of vertices and $E$ is a set of coloured edges, some of which are red and undirected and the rest of which are blue and directed. We define basic notions such as induced subgraphs of $G$ in the obvious way.
Given a vertex $v\in V$ we denote the set of all vertices $x\in V$ for which there is a blue directed edge $vx\in E$ by $N^+_{G}(v)$. We call
the vertices in $N^+_{G}(v)$ \emph{blue out-neighbours of $v$}. We define the sets $N^-_{G}(v)$ of \emph{blue in-neighbours of $v$} and $N^{\text{red}}_{G}(v)$ of \emph{red neighbours of $v$} in a similar way.
If $x\in N^+_{G}(v)\cup N^-_{G}(v)$ we say that \emph{$x$ is a blue neighbour of $v$}.

We denote the complete bipartite digraph (with edges in both directions) with vertex classes of sizes $a$ and $b$ by $DK_{a,b}$.
The following simple proposition will also be used in the proof of Lemma~\ref{odd cycle stability}. We omit its straightforward proof, the details of which can be found in~\cite{Townsend PhD}.

\begin{proposition}\label{cycle proposition 2}
Let $k\in \mathbb{N}$ be even and let $G$ be a $C_{k+1}$-free digraph. Suppose $G$ contains a copy of $DK_{k/2, k/2}$ with vertex classes $A,B$. Then the following hold.
\begin{enumerate}[{\rm (i)}]
\item Suppose $x\in V(G)\setminus (A\cup B)$. Then $e(G[\{x\}, A\cup B])\leq k$, with equality only if $G[\{x\}, A\cup B]= \overrightarrow{K}(\{x\}, A\cup B)$ or $G[\{x\}, A\cup B]= \overrightarrow{K}(A\cup B, \{x\})$ or $G[\{x\}\cup A\cup B]=DK_{k/2+1, k/2}$.
\item Suppose $G$ contains another copy of $DK_{k/2, k/2}$ with vertex classes $C, D$ such that $(C\cup D) \cap (A\cup B)= \emptyset$. Then $e(G[C\cup D, A\cup B])\leq k^2$, with equality only if $G[C\cup D, A\cup B]= \overrightarrow{K}(C\cup D, A\cup B)$ or $G[C\cup D, A\cup B]= \overrightarrow{K}(A\cup B, C\cup D)$ or $G[C\cup D\cup A\cup B]=DK_{k, k}$.
\end{enumerate}
\end{proposition}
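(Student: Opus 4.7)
Both parts of the proposition follow by direct cycle-finding arguments that exploit the fact that in $DK_{k/2,k/2}$ a directed Hamilton path from $u$ to $v$ exists if and only if $u, v$ lie on opposite sides of the bipartition $A \mid B$.

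For part (i), set $\alpha^\pm := |N^\pm_G(x) \cap A|$ and $\beta^\pm := |N^\pm_G(x) \cap B|$. The key claim is: if $xu, vx \in E(G)$ for some $u \in A$ and $v \in B$ (or vice versa), then prepending $vx$ and appending $xu$ to a directed Hamilton path of $DK_{k/2,k/2}$ from $u$ to $v$ yields a copy of $C_{k+1}$ in $G$. Hence the $C_{k+1}$-freeness of $G$ forces $\alpha^+ \beta^- = \alpha^- \beta^+ = 0$. A brief case analysis on which of $\alpha^\pm, \beta^\pm$ can be nonzero then yields $\alpha^+ + \alpha^- + \beta^+ + \beta^- \leq k$ and identifies the four equality configurations: $\alpha^+ = \beta^+ = k/2$ (giving $G[\{x\}, A \cup B] = \overrightarrow{K}(\{x\}, A \cup B)$); $\alpha^- = \beta^- = k/2$ (giving $\overrightarrow{K}(A\cup B, \{x\})$); $\alpha^+ = \alpha^- = k/2$ with $\beta^+ = \beta^- = 0$; and $\beta^+ = \beta^- = k/2$ with $\alpha^+ = \alpha^- = 0$. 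The last two both yield $G[\{x\} \cup A \cup B] = DK_{k/2+1, k/2}$, with $x$ attached to $B$ or $A$ respectively.

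For part (ii), summing the bound from (i) over $x \in C \cup D$ immediately gives $e(G[C \cup D, A \cup B]) \leq k \cdot |C \cup D| = k^2$. For equality, every $x \in C \cup D$ must attain equality in (i), so $C \cup D$ is partitioned into four classes $X_+, X_-, X_A, X_B$ according to the four cases above. It then suffices to show that the only admissible partitions are $C \cup D = X_+$, $C \cup D = X_-$, or $X_A \cup X_B = C \cup D$ with $\{X_A, X_B\} = \{C, D\}$, which produce the three structures claimed in (ii). I expect this case analysis to be the main obstacle.

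The tool for ruling out every other mixture is to exhibit a copy of $C_{k+1}$ of the form $x \to p_1 \to \dots \to p_{k-1} \to y \to x$, where $x, y \in C \cup D$ lie in incompatible classes and $p_1, \dots, p_{k-1}$ is an alternating path in $A \cup B$ whose endpoints fit the allowed sides of $x$ and $y$. The parity of the alternation (forced by $k$ being even) is exactly what determines which mixtures are forbidden. When $x, y$ lie on the same side of the $DK_{k/2, k/2}$ bipartition of $C \cup D$ (so $xy$ is not an edge), we replace the closing edge $yx$ by a detour $y \to z \to x$ through any vertex $z$ on the opposite side of $C \cup D$, which is joined to both $x$ and $y$ by double edges, and shorten the alternating path to $k-2$ interior vertices. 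One checks that every forbidden mixture ($X_+$ and $X_-$ both nonempty; $X_+$ meeting $X_A$ or $X_B$, and symmetrically for $X_-$; $X_A$ intersecting both sides of $C \mid D$; $X_A$ and $X_B$ meeting the same side of $C \mid D$) admits such a $C_{k+1}$, whereas the three admissible partitions do not. The small case $k = 2$ degenerates to explicit $C_3$-finding on a $4$-vertex subgraph and is handled separately.
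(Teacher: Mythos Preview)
Your argument for part~(i) is essentially the same as the paper's: the two forbidden patterns $\alpha^+\beta^-=0$ and $\alpha^-\beta^+=0$ are exactly the two implications the paper records, and the equality analysis is identical.

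For part~(ii) your route diverges from the paper's, though both are correct. The paper does not partition $C\cup D$ into four types. Instead, once some $x\in C\cup D$ lies in (your) $X_+$, it swaps the roles of the two copies of $DK_{k/2,k/2}$ and applies part~(i) to each $y\in A\cup B$ with respect to the pair $(C,D)$; two of the three possible configurations for $y$ are then ruled out by the same kind of cycle-finding, forcing $G[\{y\},C\cup D]=\overrightarrow{K}(C\cup D,\{y\})$ for every $y$, hence $G[C\cup D,A\cup B]=\overrightarrow{K}(C\cup D,A\cup B)$. The case $x\in X_-$ is symmetric, and the remaining case ($C\cup D\subseteq X_A\cup X_B$) is dispatched by one short cycle argument that uses a path of length $k-2$ inside $C\cup D$ together with a single $ba$-edge in $A\cup B$. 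So the paper's trick of reusing (i) in both directions replaces most of your mixture case analysis; your approach is more systematic but requires checking more configurations by hand. Both lead to the same conclusion, and your list of forbidden mixtures is complete. Incidentally, the general cycle templates you describe already handle $k=2$ (since $|C|=|D|=1$ forces $x,y$ to lie on opposite sides of $C\mid D$), so no separate treatment is needed there.
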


\removelastskip\penalty55\medskip\noindent{\bf Proof of Lemma~\ref{odd cycle stability}.}
Choose $n_0, \delta, \varepsilon_1, \varepsilon_2$ such that $1/n_0 \ll \delta \ll \varepsilon_1 \ll \varepsilon_2 \ll 1/k, \varepsilon$. Let $G$ be a $C_{k+1}$-free digraph on $n\geq n_0$ vertices which satisfies
(\ref{G close to complete}).

Let $t\geq 0$ denote the maximum number of vertex-disjoint copies of $DK_{k/2, k/2}$ in $G$. Let $\mathcal{A}=\{A^{1},\dots, A^{t} \}$ be
a collection of vertex sets of $t$ vertex-disjoint copies of $DK_{k/2, k/2}$ in $G$. Let $V_1:= A^{1}\cup \dots \cup A^{t}$ and let $V_2:= V(G)\setminus V_1$.
Note that $G[V_2]$ is $DK_{k/2, k/2}$-free, and hence by the K\H{o}v\'ari-S\'os-Tur\'an theorem $G[V_2]$ contains at most $\delta n^2$ double edges.

\vspace{0.2cm}
\noindent{\bf Claim 1:} \textit{For each $i\in [t]$ there are at most $6\delta^{1/2} n$ vertices $x\in V_2$ for which
$G[A^i\cup \{x\}]=DK_{k/2, k/2+1}$.}

\smallskip

\noindent
Indeed, suppose that there exists a set $X$ of more than $6\delta^{1/2} n$ such vertices. Proposition~\ref{cycle proposition 2}(ii) and
the fact that $1/n_0 \ll \delta,1/k$ together imply that $e(G[V_1])\le \binom{|V_1|}{2}+\delta n^2$. Moreover,
$e(V_1,V_2)\le |V_1||V_2|$ by Proposition~\ref{cycle proposition 2}(i). Together with our previous observation that
$G[V_2]$ contains at most $\delta n^2$ double edges and (\ref{G close to complete}) this implies that
$e(G[X])\ge \binom{|X|}{2}-3\delta n^2$. But this means that there are $x,y\in X$ such that $xy\in E(G)$ and
such that both $x$ and $y$ are joined with double edges to the same vertex
class of $G[A^i]=DK_{k/2, k/2}$, which contradicts the fact that $G$ is $C_{k+1}$-free.

\vspace{0.2cm}
Let $G^*$ be the digraph obtained from $G$ by deleting the at most $\delta n^2$ double edges in $G[V_2]$ and deleting the double
edges between $A^i$ and all the vertices $x\in V_2$ for which $G[A^i\cup \{x\}]=DK_{k/2, k/2+1}$ (for each $i\in [t]$).
By Claim~1, for each $i\in [t]$, the number of the latter double edges is at most $6\delta^{1/2}n\cdot k/2=3k\delta^{1/2} n$. Thus%
   \COMMENT{get at most $n/k\cdot 6k\delta^{1/2} n=3\delta^{1/2} n^2$ double edges of the 2nd kind, but have to multiply by 2, since each double
edge counts twice}
\begin{equation}\label{G close to complete2}
e(G^*)\ge e(G)- 2\left(\delta n^2 + \frac{n}{k}\cdot 3k \delta^{1/2} n\right)\stackrel{(\ref{G close to complete})}{\ge} \binom{n}{2}-7\delta^{1/2} n^2.
\end{equation}

Consider the semi-oriented graph $G'=(V',E')$ where $V':=\mathcal{A}\cup V_2$ and the edge set $E'$ is defined as follows. Firstly,
for every vertex $v\in V'$ define $f(v)$ to be $v$ if $v\in \mathcal{A}$, and to be $\{v\}$ if $v\in V_2$. If $u,v\in V'$ then there
is a blue edge in $E'$ directed from $u$ to $v$ if $G[f(u), f(v)]=\overrightarrow{K}(f(u), f(v))$.
If $u,v\in \mathcal{A}$ then there is a red edge in $E'$ between $u$ and $v$ if $G[f(u)\cup f(v)]=DK_{k,k}$. So $G'[V_2]=G^*[V_2]$.

Note that, since $G$ is $C_{k+1}$-free, $G'$ cannot contain any copy of $C_{k+1}$ which contains at least one blue edge and in which all the blue edges are oriented consistently
(as any such copy of $C_{k+1}$ in $G'$ would correspond to a $C_{k+1}$ in $G$).

Let $V'_0$ denote the set of all vertices $v\in V'$ for which there are at least $\delta^{1/4} n$ vertices $u\in V'$ such that $G'$ does not contain an edge between $v$ and $u$.
Note that Proposition~\ref{cycle proposition 2} together with (\ref{G close to complete2}) implies that
$|E'|\geq \binom{|V'|}{2} - 7\delta^{1/2} n^2.$
Hence\COMMENT{Note that we have a factor of $15$ instead of $14$ since for each $v\in V'_0$ there might only be
$\delta^{1/4} n-1$ vertices $u\in V'\setminus \{v\}$ such that $G'$ does not contain an edge between $v$ and $u$.}
$|V'_0|\leq 15\delta^{1/4} n$. Let $G'':= G'-V'_0$, $V'':=V'\setminus V'_0$ and $E'':=E(G'')$.

\vspace{0.2cm}
\noindent{\bf Claim 2:} \ 
\begin{itemize}
\item[(a)] \textit{For every vertex $v\in V''$ there are at most $\delta^{1/4} n$ vertices $u\in V''$
such that $G''$ does not contain an edge between $v$ and $u$.}
\item[(b)] \textit{$G''$ does not contain a triangle $uvw$ such that both $uv$ and $vw$ are red edges and $wu$ is a blue edge.}
\item[(c)] \textit{$G''$ does not contain a triangle $uvw$ such that $uv$ is a red edge and both $vw$ and $wu$ are (directed) blue edges.}
\end{itemize}

\smallskip

\noindent
Indeed, (a) is clear from the definition of $G''$, while (b) and (c) follow easily from the fact that $G$ is $C_{k+1}$-free.

\medskip

Given $q,q'\in\mathbb{N}$, we say that $U\subseteq V''$ is a \emph{red $(q,q')$-clique} if $|U|\geq q'$ and $|U\setminus N^\text{red}_{G''}(u)|\le q$ for all $u\in U$.

\vspace{0.2cm}
\noindent{\bf Claim 3:} \textit{Suppose that $R$ is a red $(\delta^{1/4} n,\eps_1 n)$-clique. Then the following hold.
\begin{itemize}
\item[{\rm (a)}] $G''[R]$ does not contain a blue edge.
\item[{\rm (b)}] No vertex $v\in V''\setminus R$ has both a red and a blue neighbour in $R$.
\item[{\rm (c)}] No vertex $v\in V''\setminus R$ has both a blue in-neighbour and a blue out-neighbour in $R$.
\end{itemize}
Suppose that $R'$ is another red $(\delta^{1/4} n,\eps_1 n)$-clique such that $R\cap R'=\emptyset$. Then the following hold.
\begin{itemize}
\item[{\rm (d)}] $G''$ cannot contain both a red edge and a blue edge between $R$ and $R'$.
\item[{\rm (e)}] $G''$ cannot contain both a directed blue edge from some vertex in $R$ to some vertex in $R'$ and a
directed blue edge from some vertex in $R'$ to some vertex in $R$.
\end{itemize}}

\smallskip

\noindent
First note that (a) follows immediately from Claim~2(b) and the definition of a red $(\delta^{1/4} n,\eps_1 n)$-clique.
To prove (b), suppose that some vertex $v\in V''\setminus R$ has both a red and a blue neighbour in $R$. Claim~2(a) and the fact that $|R|\ge \eps_1 n$ imply that either $v$ has at least $\eps_1 n/3$ red neighbours in $R$ or at least
$\eps_1 n/3$ blue neighbours in $R$ (or both). Suppose that the former holds (the argument for the latter is similar).
Let $u\in R$ be a blue neighbour of $v$. Since $R$ is a red $(\delta^{1/4} n,\eps_1 n)$-clique, all but at most $\delta^{1/4} n<\eps_1 n/3$ vertices of $R$ are red neighbours of $u$.
So there exists a red neighbour $u'\in R$ of $u$ which is also a red neighbour of $v$. Then the triangle $uu'v$ contradicts Claim~2(b).
This proves (b). The argument for (c) is similar. (d) follows from (b) and Claim~2(a),%
    \COMMENT{Suppose that $uu'$ is a red edge and $vv'$ is a blue edge with $u,v\in R$ and $u',v'\in R'$. Then (b) and Claim~2(a) together imply that
there exists $w'\in R'$ such that $w'$ is both a red neighbour of $u$ and a blue neighbour of $v$, a contradiction.}
while (e) follows from (b), (c) and Claim~2(a).

\vspace{0.2cm}
\noindent{\bf Claim 4:} \textit{There exists a collection $\cR$ of pairwise disjoint red $(\delta^{1/4} n,\eps_1 n)$-cliques such that, writing $V_\cR$
for the set of all those vertices in $V''$ covered by these red $(\delta^{1/4} n,\eps_1 n)$-cliques, the following holds:
\begin{itemize}
\item[{\rm (a)}] For every $R\in \cR$ and every $v\in R$ all red neighbours of $v$ lie in $R$.
\item[{\rm (b)}] Every $v\in V''\setminus V_\cR$ has less than $\eps_1 n$ red neighbours (and all of these lie in $ V''\setminus V_\cR$).
\end{itemize}}

\smallskip

\noindent
To prove Claim~4, let $\cR$ be a collection of pairwise disjoint red $(\delta^{1/4} n,\eps_1 n)$-cliques such that the set $V_{\cR}$
of all those vertices in $V''$ covered by these red $(\delta^{1/4} n,\eps_1 n)$-cliques is maximal and, subject to this condition, such that $|\cR|$ is minimal.
We will show that $\cR$ is as required in Claim~4.

To prove that Claim~4(a) holds, suppose first that there is some vertex $x\in V''\setminus V_{\cR}$ that has a red neighbour in some $R\in \cR$. Then Claims~2(a) and~3(b) together imply that $|(R\cup \{x\})\setminus N^\text{red}_{G''}(x)|\leq \delta^{1/4} n$. Moreover, by Claims~2(a) and~3(a),(b) we have that $|(R\cup \{x\})\setminus N^\text{red}_{G''}(v)|\leq \delta^{1/4} n$ for every $v\in R$. So $R\cup \{x\}$ is a red $(\delta^{1/4} n,\eps_1 n)$-clique, contradicting our choice of $\cR$.

Suppose next that there are distinct $R,R'\in \cR$ such that $G''$ contains a red edge between $R$ and $R'$. Then Claims~3(a),(d) imply that $G''[R\cup R']$ does not contain a blue edge. Together with Claim~2(a) this implies that $R\cup R'$ is a red $(\delta^{1/4} n,\eps_1 n)$-clique, again contradicting our choice of $\cR$. Altogether this proves Claim~4(a).

To check Claim~4(b), suppose that some $v\in V''\setminus V_{\cR}$ has at least $\eps_1 n$ red neighbours.
Claim~2(b) implies that $G''[N^\text{red}_{G''}(v)]$ cannot contain a blue edge. Together with Claim~2(a) this implies that
$G''[N^\text{red}_{G''}(v)]$ is a red $(\delta^{1/4} n,\eps_1 n)$-clique. But Claim~4(a) implies that $N^\text{red}_{G''}(v)\subseteq V''\setminus V_{\cR}$,
contradicting our choice of $\cR$. This completes the proof of Claim~4.

\medskip

Let $G'''$ be the semi-oriented graph obtained from $G''$ by deleting all the red edges which are not covered by
some $R\in\cR$. Note that by Claim~4(b) at most $\eps_1 n^2$ red edges are deleted.
Let $J$ be the oriented graph obtained from $G'''$ by contracting each $R\in \cR$ into a single vertex $v_R$.
So $V(J)$ consists of all these vertices $v_R$ as well as all the vertices in $V''\setminus V_{\cR}$.
Let $J_2:=J[V''\setminus V_{\cR}]=G'''[V''\setminus V_{\cR}]$ and let $J_1:=J-V(J_2)$. Claims~3(c),(e) and Claim~4(a) together imply that $J$ is indeed an oriented graph. 
Moreover, by Claim~2(a) $J_1$ is a tournament and $J[V(J_1),V(J_2)]$ is a bipartite tournament (i.e.~for all $v_R\in V(J_1)$ and $v\in V(J_2)$
either $v_Rv$ or $vv_R$ is a directed edge of~$J$).  

\vspace{0.2cm}
\noindent{\bf Claim 5:} \ 
\textit{
\begin{itemize}
\item[{\rm (a)}] $J$ does not contain a copy of $C_3$ having at least one vertex in $V(J_1)$.
\item[{\rm (b)}] $J_1$ is a transitive tournament.
\item[{\rm (c)}] $J$ can be made into a transitive tournament by changing at most $\eps_2 n^2$ edges in $E(J_2)$.
\end{itemize}}

\smallskip

\noindent
Suppose that (a) does not hold and let $xyv_R$ be a copy of $C_3$ in $J$. We only consider the case when $x\in V(J_1)$ and $y\in V(J_2)$; the
other cases are similar. So let $R'\in \cR$ be such that $x=v_{R'}$. Claim~2(a)%
    \COMMENT{need Claim~2(a) to ensure that $y$ is joined to most vertices in $R'$}
and the definition of $J$ together imply that $R'$ contains a blue
in-neighbour $x'$ of $y$ (in $G''$). 
Moreover, Claims~2(a) and~3(c),(e) imply that $|R\setminus N^-_{G''}(x')|\le \delta^{1/4} n$ and $|R\setminus N^+_{G''}(y)|\le \delta^{1/4} n$.
Together with the fact that $R$ is a red $(\delta^{1/4} n,\eps_1 n)$-clique this implies that $R$ contains a path $P=u\dots v$ of
length $k-2$ where $u\in N^+_{G''}(y)$ and $v\in N^-_{G''}(x')$. So $Px'y$ is a $C_{k+1}$ in $G''$ in which all the blue edges are oriented consistently. Using the fact that the edge $vx'$ is blue, it is now easy to see that
$Px'y$ corresponds to a $C_{k+1}$ in $G$, a contradiction. This proves (a). (b) follows from (a) and our previous observation that
$J_1$ is a tournament.%
   \COMMENT{Any tournament $T$ which contains a cycle must contain $C_3$. Indeed, if $x_1\dots x_r$ is a cycle, then $x_1x_3\in E(T)$
(else there is a $C_3$ in $T$). So $x_1x_4\in E(T)$. Continue to see that $x_1x_{r-1}\in E(T)$. So $x_1x_{r-1}x_r$ is a $C_3$.}

It remains to prove~(c).
Note that $e(J_2)\ge \binom{|J_2|}{2}-2\eps_1 n^2$ by Claim~2(a) and the definition of $J$ (and of $G'''$). Moreover, $J_2=G'''[V''\setminus V_{\cR}]$ is a $C_{k+1}$-free oriented graph.
So Lemma~\ref{cycle stability} implies that $J_2=T_{|J_2|}\pm \eps_2 n^2$. Let $\sigma_2: V(J_2)\to [|J_2|]$ be a transitive-optimal
ordering of the vertices of $J_2$. Let $r:=|J_1|=|\cR|$ and let $v_{R_1},\dots v_{R_r}$ be the unique transitive ordering of the vertices of $J_1$.
We claim that for every vertex $x\in V(J_2)$ there exists an index $i_x\in [r]$ such that all the $v_{R_i}$ with $i\le i_x$ are in-neighbours
of $x$ in $J$ while all the $v_{R_i}$ with $i> i_x$ are out-neighbours of $x$ in $J$. (Indeed, suppose not. Since
$J[V(J_1),V(J_2)]$ is a bipartite tournament this implies that there are indices $i<j$ such that $v_{R_i}$ is an out-neighbour of $x$ in $J$
and $v_{R_j}$ is an in-neighbour of $x$ in $J$. But then $xv_{R_i}v_{R_j}$ is a copy of $C_3$ contradicting~(a).)
For each $i\in [r]$ let $X_i:=\{x\in V(J_2): i_x=i\}$. Note that there are no indices $i<j$ such that
$J$ contains a directed edge from some vertex $x\in X_j$ to some vertex $x'\in X_i$ (otherwise $xx'v_{R_{i+1}}$ would be a copy of~$C_3$ contradicting~(a)).
Consider the vertex ordering $\sigma$ obtained from
$v_{R_1},\dots ,v_{R_r}$ by including all the vertices in $X_i$ between $v_{R_i}$ and $v_{R_{i+1}}$ in the
ordering induced by $\sigma_2$ (for each $i\in [r]$). This vertex ordering shows that~(c) holds.

\medskip

Recall that for each $R\in \cR$ the set $\bigcup R$ is a subset of $V(G)$ of size $k|R|$.

\vspace{0.2cm}
\noindent{\bf Claim 6:} \textit{Each red $(\delta^{1/4} n,\eps_1 n)$-clique $R\in \cR$ satisfies $G[\bigcup R]=DK_{|R|k/2,|R|k/2}\pm \delta^{1/5} n^2$.}

\smallskip

\noindent
To prove Claim~6, pick $v\in R$ and write $N^\text{red}_{G''}(v)\cap R=\{v_1,\dots,v_s\}$.
Recall that $v$ corresponds to a copy of $DK_{k/2,k/2}$ in $G$, and let $A$ and $B$ denote the vertex classes of this copy.
Similarly, each $v_i$ corresponds to a copy of $DK_{k/2,k/2}$ in $G$. Let $A_i$ and $B_i$ denote its vertex classes.
Recall from the definition of $G''$ that $G[A\cup A_i\cup B\cup B_i]=DK_{k,k}$. By swapping $A_i$ and $B_i$ if necessary, we may
assume that the vertex classes of this copy of $DK_{k,k}$ are $A\cup A_i$ and $B\cup B_i$. Since $G$ is $C_{k+1}$-free,
neither $G[A_1\cup \dots \cup A_s]$ nor $G[B_1\cup \dots \cup B_s]$ contains an edge. Thus whenever $v_iv_j$ is a red edge in $G''$ then
$G[A_i\cup A_j\cup B_i\cup B_j]$ is a copy of $DK_{k,k}$ with vertex classes $A_i\cup A_j$ and $B_i\cup B_j$.
But since $R$ is a red $(\delta^{1/4} n,\eps_1 n)$-clique, for each $i\in [s]$ all but at most $\delta^{1/4} n$ vertices in $\{v_1,\dots,v_s\}$
are red neighbours of $v_i$ and $|R\setminus \{v_1,\dots,v_s\}|\le \delta^{1/4} n$. Thus $G[\bigcup R]=DK_{|R|k/2,|R|k/2}\pm \delta^{1/5} n^2$,
as required.
  
\medskip
Using Claims~5(c) and~6 it is now straightforward to check that $G$ can be made into a transitive-bipartite blow up by
changing at most $\varepsilon n^2$ edges.
\endproof

We now have all the tools we need to show that almost all $C_k$-free oriented graphs are close to acyclic, and that for all even $k$ almost all $C_k$-free digraphs are close to acyclic, and that for all odd $k$ almost all $C_k$-free digraphs are close to a transitive-bipartite blow up. The proof of Lemma~\ref{quadratically many backwards} is almost identical to that of Lemma~\ref{step 0}, using Lemmas~\ref{cycle stability},~\ref{cycle stability digraphs} and~\ref{odd cycle stability} instead of Lemma~\ref{lem:sta}, and so is omitted here.

\begin{lemma}\label{quadratically many backwards}
For every $k\in \mathbb{N}$ with $k\geq 3$ and any $\a > 0$ there exists $\eps>0$ such that the following holds for all sufficiently large $n$.
\begin{enumerate}[{\rm (i)}]
\item All but at most $ f(n, C_k) 2^{-\eps n^2}$ $C_k$-free oriented graphs on $n$ vertices can be made into subgraphs of $T_n$ by changing at most $\a n^2$ edges.
\item If $k$ is even then all but at most $ f^*(n, C_k) 2^{-\eps n^2}$ $C_k$-free digraphs on $n$ vertices can be made into subgraphs of $T_n$ by changing at most $\a n^2$ edges.
\item If $k$ is odd then all but at most $ f^*(n, C_k) 2^{-\eps n^2}$ $C_k$-free digraphs on $n$ vertices can be made into a subgraph of a transitive-bipartite blow up by changing at most $\a n^2$ edges.
\end{enumerate}
\end{lemma}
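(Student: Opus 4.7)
The plan is to mirror the proof of Lemma~\ref{step 0} almost verbatim, with $C_k$ in place of $T_{k+1}$ and the relevant cycle stability result in place of Lemma~\ref{lem:sta}. In case (i) I would set $a := \log 3$, and in cases (ii) and (iii) I would set $a := 2$. Fix parameters $1/n_0 \ll \varepsilon \ll \gamma \ll \beta \ll \alpha, 1/k$, let $\varepsilon' := 2\varepsilon$, and apply Theorem~\ref{thm:contain} with $H := C_k$, $N := n$ and parameter $\varepsilon'$ to obtain a container collection $\mathcal{C}$ of digraphs on $[n]$ satisfying properties~(a)--(c). By~(a) every $C_k$-free oriented graph (respectively digraph) on $[n]$ is a subgraph of some $G \in \mathcal{C}$.

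Let $\mathcal{C}_1 := \{G \in \mathcal{C} : e_a(G) \geq {\rm ex}_a(n, C_k) - \varepsilon' n^2\}$. Since each digraph $G$ contains $2^{e_{\log 3}(G)}$ labelled oriented subgraphs and $2^{e_2(G)}$ labelled sub-digraphs, and since $f(n, C_k) \geq 2^{{\rm ex}_{\log 3}(n, C_k)}$ (with the analogous digraph lower bound), the number of $C_k$-free oriented graphs (resp.~digraphs) contained in some $G \in \mathcal{C} \setminus \mathcal{C}_1$ is at most
\[
|\mathcal{C}| \cdot 2^{{\rm ex}_a(n, C_k) - \varepsilon' n^2} \leq 2^{-\varepsilon n^2} f(n, C_k),
\]
using the bound $\log |\mathcal{C}| \leq c n^{2 - 1/m(C_k)} \log n \ll \varepsilon n^2$ from~(c). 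Hence it is enough to show that every $G \in \mathcal{C}_1$ already has the approximate structure claimed in (i)--(iii).

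Now fix $G \in \mathcal{C}_1$. Property~(b) gives that $G$ contains at most $\varepsilon' n^k$ copies of $C_k$, so the removal lemma (Lemma~\ref{lem:rem}) produces a $C_k$-free spanning sub-digraph $G'$ of $G$ obtained by deleting at most $\gamma n^2$ edges, and hence $e_a(G') \geq {\rm ex}_a(n, C_k) - (\varepsilon' + \gamma) n^2$. By Lemma~\ref{cycle extremal} we have ${\rm ex}_a(n, C_k) = \binom{n}{2} + O_k(n)$, so $e_a(G') \geq \binom{n}{2} - \delta n^2$ for any prescribed $\delta > 0$ once $\varepsilon', \gamma$ are small enough. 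I would then invoke the matching stability lemma: in case~(i) Lemma~\ref{cycle stability} (applied with $a = \log 3 < 2$ and $C_k$ playing the role of $C_{k+1}$) gives $G' = T_n \pm (\beta/2) n^2$; in case~(ii), $k$ is even so $k-1$ is odd and Lemma~\ref{cycle stability digraphs} yields $G' = T_n \pm (\beta/2) n^2$; in case~(iii), $k$ is odd so $k-1$ is even and Lemma~\ref{odd cycle stability} yields that $G'$ can be made into a transitive-bipartite blow up by changing at most $(\beta/2) n^2$ edges. Reinstating the $\leq \gamma n^2$ edges removed to pass from $G$ to $G'$ shows that $G$ has the required structure after changing at most $(\beta/2 + \gamma) n^2 \leq \alpha n^2$ edges, as desired. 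The only thing to be careful about is matching each case to the correct stability lemma via the parity of $k-1$; the rest is a mechanical copy of the argument for Lemma~\ref{step 0}.
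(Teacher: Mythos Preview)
Your proposal is correct and is exactly the approach the paper intends: it explicitly states that the proof of Lemma~\ref{quadratically many backwards} is almost identical to that of Lemma~\ref{step 0}, just replacing Lemma~\ref{lem:sta} by Lemmas~\ref{cycle stability},~\ref{cycle stability digraphs} and~\ref{odd cycle stability}. Your care in matching each case to the appropriate stability lemma via the parity of $k-1$ is the only non-mechanical point, and you handle it correctly.
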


\section{Typical $C_k$-free oriented graphs and digraphs are not acyclic}\label{Section: Typical C_k-free oriented graphs and digraphs are not transitive}

Let $\mathcal{O}_{n,k}$ be the set of all labelled $C_k$-free oriented graphs on $n$ vertices and let $\mathcal{O}^*_{n,k}$ be the set of all labelled $C_k$-free digraphs on $n$ vertices. We show that almost all graphs in $\mathcal{O}_{n,k}$ and almost all graphs in $\mathcal{O}^*_{n,k}$ have at least $c n/\log n$ backwards edges in a transitive-optimal ordering, for some constant $c>0$. Let $\mathcal{O}_{n,k,r}$ be the set of all labelled $C_k$-free oriented graphs on $n$ vertices with exactly $r$ backwards edges in a transitive-optimal ordering. Let $\mathcal{O}_{n,k,\leq r}:=\bigcup_{i\in \{0,1,\dots, \lfloor r\rfloor \}} \mathcal{O}_{n,k,i}$, and define the digraph analogues $\mathcal{O}^*_{n,k,r}$ and $\mathcal{O}^*_{n,k,\leq r}$ in a similar way.

\begin{lemma}\label{linearly many backwards}
Let $k\geq 3$ and let $n\in \mathbb{N}$ be sufficiently large. Then
\begin{enumerate}[{\rm (i)}]
\item $|\mathcal{O}_{n,k, n/2^{13} }|\geq 2^{{n}/{2^{14}}} |\mathcal{O}_{n,k,\leq n/\left( 2^{14}\log n \right)}|$,
\item $|\mathcal{O}^*_{n,k, n/2^{13}}|\geq 2^{{n}/{2^{14}}} |\mathcal{O}^*_{n,k,\leq n/\left( 2^{14}\log n \right)}|$.
\end{enumerate}
\end{lemma}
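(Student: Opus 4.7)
The plan is to establish the result by a per-step double-counting argument: for each $m$ in the range $n/(2^{14}\log n)\le m\le n/2^{13}$, I show that the ratio $|\mathcal{O}_{n,k,m}|/|\mathcal{O}_{n,k,m-1}|$ is bounded below by a constant $\rho_k > \sqrt{2}$ depending only on $k$. Iterating over the $\Theta(n/2^{13})$ values of $m$ in this range yields a total factor of at least $\rho_k^{\Theta(n/2^{13})} \ge 2^{n/2^{14}}$, as required. The argument for part {\rm (ii)} is analogous, using that digraphs also permit backward edges parallel to existing forward ones.

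For the per-step bound I use double counting. Fix $G'\in\mathcal{O}_{n,k,m-1}$ with transitive-optimal ordering $\sigma$ and vertex labels $v_1,\dots,v_n$. Call a consecutive pair $(v_i,v_{i+1})$ \emph{safely flippable} if neither orientation between $v_i$ and $v_{i+1}$ is a backward edge of $G'$, and if inserting the backward edge $v_{i+1}v_i$ (and deleting $v_iv_{i+1}$ if present) produces a $C_k$-free oriented graph that still has $\sigma$ as a transitive-optimal ordering and exactly $m$ backward edges. Each safe flip produces some $G\in\mathcal{O}_{n,k,m}$, and each such $G$ arises from at most $2m$ safe flips (each backward edge of $G$ yields at most two preimages $(G',i)$, depending on whether the reverse edge was originally present in $G'$). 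Hence
\[
2m\cdot|\mathcal{O}_{n,k,m}|\ \ge\ \sum_{G'\in\mathcal{O}_{n,k,m-1}}\#\bigl\{\text{safely flippable consecutive pairs of }G'\bigr\}.
\]
It therefore suffices to show that each $G'$ admits at least $c_k n$ safely flippable pairs for some $c_k>0$ depending only on $k$; the ratio then becomes $\ge c_k n/(2m) \ge c_k\cdot 2^{12}$, easily exceeding $\sqrt{2}$.

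A pair $(v_i,v_{i+1})$ fails to be safely flippable only if $G'$ contains a directed path of length $k-1$ from $v_i$ to $v_{i+1}$ (beyond the trivial exclusion of at most $2(m-1)\ll n$ pairs already incident to a backward edge). Since consecutive $\sigma$-positions admit no forward-only path of length $\geq 2$, every blocking path must use at least one backward edge of $G'$. For $k=3$ this yields a clean bound: each backward edge of $G'$ blocks at most two consecutive pairs (determined by its two possible positions in a length-$2$ path, each fixing $i$ uniquely), so blocked pairs number at most $2(m-1)\le n/2^{12}$, leaving at least $n/2$ safely flippable.

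For $k\ge 4$ the combinatorics is more delicate: a backward edge $ab$ used at an intermediate position $j$ of a length-$(k-1)$ path may contribute up to order $n$ blocked pairs, one for each $v_i$ that starts a forward path of length $j-1$ ending at $a$ and whose successor $v_{i+1}$ ends a forward path of length $k-1-j$ from $b$. The main obstacle is to control this contribution. The approach is to exploit the $C_k$-freeness of $G'$ itself: if too many consecutive pairs lay simultaneously in the forward in-reach of $a$ and the forward out-reach of $b$, then concatenating with the backward edge $ab$ and short auxiliary paths would force a copy of $C_k$ in $G'$, a contradiction. Making this observation quantitative yields the required bound of $\ge c_k n$ safely flippable pairs per $G'$ and completes the argument.
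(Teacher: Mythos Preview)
Your per-step flip between consecutive vertices is fatally flawed: it can \emph{never} land in $\mathcal{O}_{n,k,m}$. Suppose $G'\in\mathcal{O}_{n,k,m-1}$ with transitive-optimal ordering $\sigma$, and you insert the backward edge $v_{i+1}v_i$ (deleting $v_iv_{i+1}$ if present) to form $G$. Then $G$ has $m$ backward edges with respect to $\sigma$, but consider the ordering $\sigma'$ obtained from $\sigma$ by swapping positions $i$ and $i+1$. Because these positions are consecutive, the swap changes the forward/backward status of \emph{only} the edge between $v_i$ and $v_{i+1}$; every other edge incident to $v_i$ or $v_{i+1}$ keeps its status, since for $j\notin\{i,i+1\}$ the conditions $j>i$ and $j>i+1$ (resp.\ $j<i$ and $j<i+1$) are equivalent. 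Hence $G$ has exactly $m-1$ backward edges in $\sigma'$, so its transitive-optimal ordering has at most $m-1$ backward edges and $G\notin\mathcal{O}_{n,k,m}$. Consequently there are \emph{no} safely flippable pairs in your sense, and the double-counting inequality is vacuous.

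The paper avoids this obstruction by inserting, not a single backward edge, but a short directed \emph{cycle} on a block of consecutive vertices (a $4$-cycle on four consecutive vertices when $k=3$, a $3$-cycle on three consecutive vertices when $k\ge 4$). Any linear ordering of a directed cycle has at least one backward edge, so each inserted block forces at least one backward edge in \emph{every} ordering, and the disjoint blocks together force exactly $m_2$ backward edges in a transitive-optimal ordering. The paper then compares $\mathcal{O}_{n,k,m_1}$ to $\mathcal{O}_{n,k,0}$ (delete all backward edges) and $\mathcal{O}_{n,k,0}$ to $\mathcal{O}_{n,k,m_2}$ (insert $m_2$ disjoint cycles) by two separate double counts; the gain in the second step beats the loss in the first. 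Your incremental $m\to m+1$ strategy cannot be salvaged with single-edge flips; any fix would need to introduce an indivisible structure (like a cycle) that is robust under reordering. Finally, note that your treatment of the $k\ge 4$ case was already only a sketch (``making this observation quantitative yields the required bound'') and does not constitute a proof even modulo the issue above.
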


Note that Lemma~\ref{linearly many backwards} together with Lemma~\ref{quadratically many backwards} immediately yields Theorem~\ref{C_k free main theorem}.

\removelastskip\penalty55\medskip\noindent{\bf Proof of Lemma~\ref{linearly many backwards}.}
We only prove the case $k=3$ of {\rm (i)} here; the proofs for {\rm (ii)} and the case $k>3$ are very similar\COMMENT{For $k>3$, we use flippable $3$-sets instead of flippable $4$-sets. The proof is almost identical, just using different numerical values - see further COMMENTS.}. Let $m_2:=\lfloor n/2^{13} \rfloor$. Fix $m_1\in \mathbb{Z}$ with $0\leq m_1\leq m_2/(2\log n)$. For every oriented graph $G$ fix some transitive-optimal ordering $\sigma_G: V(G) \to [n]$.

Consider an auxiliary bipartite graph $H$ with vertex classes $\mathcal{O}_{n,3,m_1}$ and $\mathcal{O}_{n,3,0}$ whose edge set is defined as follows. Let there be an edge in $H$ between $A\in \mathcal{O}_{n,3,m_1}$ and $B\in \mathcal{O}_{n,3,0}$ if the graph $B$ can be obtained from the graph $A$ by deleting the $m_1$ backwards edges with respect to $\sigma_A$. Note that every graph generated in this way from a graph $A\in \mathcal{O}_{n,3,m_1}$ belongs to $\mathcal{O}_{n,3,0}$, so $A$ certainly has at least one neighbour in $\mathcal{O}_{n,3,0}$.

We claim that, in $H$, a graph $B\in \mathcal{O}_{n,3,0}$ has at most $\binom{n^2/2}{m_1}2^{m_1}$ neighbours in $\mathcal{O}_{n,3,m_1}$. Indeed, any graph in $\mathcal{O}_{n,3,m_1}$ that can generate $B$ in the described way can be obtained from $B$ by choosing exactly $m_1$ of the at most $n^2/2$ pairs of vertices that have no edge between them in $B$, and then adding edges between them with some orientations\COMMENT{Note that not every choice of orientations here will give a graph in $\mathcal{O}_{n,3,m_1}$, but since every graph in $\mathcal{O}_{n,3,m_1}$ can be obtained in this way, an upper bound on the number of ways of orienting these $m_1$ edges is an upper bound on the number of graphs in $\mathcal{O}_{n,3,m_1}$ that have some edge in each of the $m_1$ locations we have chosen, and this is all we require.} (for which there are $2^{m_1}$ possibilities).

Together with our previous observation that, in $H$, every graph $A\in \mathcal{O}_{n,3,m_1}$ has at least one neighbour in $\mathcal{O}_{n,3,0}$, this implies that
\begin{equation}\label{penultimate equation}
|\mathcal{O}_{n,3,m_1}|\leq \sum\limits_{A\in \mathcal{O}_{n,3,m_1}}d_{\mathcal{O}_{n,3,0}}(A)=\sum\limits_{B\in \mathcal{O}_{n,3,0}}d_{\mathcal{O}_{n,3,m_1}}(B)\leq |\mathcal{O}_{n,3,0}|\binom{n^2/2}{m_1}2^{m_1}.
\end{equation}

For a graph $G\in \mathcal{O}_{n,3,0}$ we define a \textit{flippable $4$-set} in $G$ to be any set of $4$ vertices, with labels $w,x,y,z$ say, satisfying the following:
\begin{itemize}
\item the vertices $w,x,y,z$ are consecutive in the ordering $\sigma_G$; that is $\sigma_G(w)+3=\sigma_G(x)+2= \sigma_G(y)+1=\sigma_G(z)$,
\item $\sigma_G(w)-1$ is divisible by $4$.
\end{itemize}
Note that every graph in $\mathcal{O}_{n,3,0}$ has $\lfloor n/4 \rfloor$ flippable $4$-sets\COMMENT{$\lfloor n/3 \rfloor$ flippable $3$-sets when $k>3$}.

Now consider an auxiliary bipartite graph $H'$ with vertex classes $\mathcal{O}_{n,3,0}$ and $\mathcal{O}_{n,3,m_2}$ whose edge set is defined as follows. Let there be an edge in $H'$ between $B\in \mathcal{O}_{n,3,0}$ and $C\in \mathcal{O}_{n,3,m_2}$ if the graph $C$ can be obtained from the graph $B$ by choosing exactly $m_2$ flippable $4$-sets in $B$ with respect to $\sigma_B$ and, for each flippable $4$-set $w,x,y,z$ chosen, deleting all edges between the vertices $w,x,y,z$ and then adding the edges of a $4$-cycle $wxyz$. Note that every graph generated in this way from a graph $B\in \mathcal{O}_{n,3,0}$ belongs to $\mathcal{O}_{n,3,m_2}$.

We claim that, in $H'$, a graph $B\in \mathcal{O}_{n,3,0}$ has exactly $\binom{\lfloor n/4 \rfloor}{m_2}$ neighbours\COMMENT{$\binom{\lfloor n/3 \rfloor}{m_2}$ neighbours when $k>3$} in $\mathcal{O}_{n,3,m_2}$. Indeed, the neighbours of $B$ are precisely those graphs generated by choosing exactly $m_2$ of the exactly $\lfloor n/4 \rfloor$ flippable $4$-sets in $B$ with respect to $\sigma_B$, and then changing the edges between pairs of vertices in these flippable $4$-sets in the described way. Each choice of $m_2$ flippable $4$-sets generates a different graph. So the claim holds.

We claim also that, in $H'$, a graph $C\in \mathcal{O}_{n,3,m_2}$ has at most $2^{8m_2}$ neighbours\COMMENT{$2^{5m_2}\leq 2^{8m_2}$ neighbours when $k>3$} in $\mathcal{O}_{n,3,0}$. Indeed, first note that any graph in $\mathcal{O}_{n,3,m_2}$ with at least one neighbour in $\mathcal{O}_{n,3,0}$ contains exactly $m_2$ induced $4$-cycles. Any graph in $\mathcal{O}_{n,3,0}$ that can generate $C$ in the described way can be obtained from $C$ by choosing for each of the $m_2$ induced $4$-cycles an ordering of the $4$ vertices respecting the order of the $4$-cycle (of which there are $4$), and then changing the edges between pairs of vertices in these $4$-cycles to some transitive configuration with respect to the chosen ordering (for which there are $2^6$ possibilities\COMMENT{$2^3$ transitive configurations of $3$-cycles when $k>3$}). So indeed the claim holds.

So using these degree bounds gives us that
\begin{equation}\label{last equation}
|\mathcal{O}_{n,3,0}| \binom{\lfloor n/4 \rfloor}{m_2}= \sum\limits_{B\in \mathcal{O}_{n,3,0}}d_{\mathcal{O}_{n,3,m_2}}(B)=\sum\limits_{C\in \mathcal{O}_{n,3,m_2}}d_{\mathcal{O}_{n,3,0}}(C)\leq |\mathcal{O}_{n,3,m_2}|2^{8m_2}.
\end{equation}

Now (\ref{penultimate equation}) and (\ref{last equation}) together imply that
\begin{equation}\label{really last equation}
\frac{|\mathcal{O}_{n,3,m_2}|}{|\mathcal{O}_{n,3,m_1}|}\geq \frac{\binom{\lfloor n/4 \rfloor}{m_2}}{\binom{n^2/2}{m_1}2^{m_1}2^{8m_2}}.
\end{equation}
Since $n$ is sufficiently large we have that $\binom{\lfloor n/4 \rfloor}{m_2}\geq \left( \frac{n}{8m_2} \right)^{m_2}$ and $\binom{n^2/2}{m_1}2^{m_1}\leq n^{2m_1}$. Hence the right hand side of (\ref{really last equation}) is at least
\begin{equation*}
\left( \frac{n}{2^{11}m_2} \right)^{m_2} n^{-2m_1}\geq \left( \frac{n}{2^{11}m_2} \right)^{m_2} n^{-\frac{m_2}{\log n}} = 2^{m_2 \log \left( \frac{n}{2^{11}m_2} \right) - \frac{m_2}{\log n} \log n} \geq 2^{m_2}.
\end{equation*}
So this together with (\ref{really last equation}) gives us that $|\mathcal{O}_{n,3,m_2}|\geq 2^{m_2} |\mathcal{O}_{n,3,m_1}|$ for any integer $0\leq m_1\leq m_2/(2 \log n)$. So since $n$ is sufficiently large,
$$|\mathcal{O}_{n,3,m_2}|\geq \frac{2^{m_2}}{n}|\mathcal{O}_{n,3,\leq m_2/(2 \log n)}|\geq 2^{{n}/{2^{14}}}|\mathcal{O}_{n,3,\leq m_2/(2 \log n)}|,$$
as required.
\endproof

\section{Acknowledgment}
This research was carried out whilst Yi Zhao was visiting the School of Mathematics at the University of Birmingham, UK. He would like to thank the school for the hospitality he received.

\medskip
{\footnotesize \obeylines \parindent=0pt

Daniela K\"uhn, Deryk Osthus, Timothy Townsend
School of Mathematics
University of Birmingham
Edgbaston
Birmingham
B15 2TT
UK

\begin{flushleft}
{\it{E-mail addresses}:
\tt{\{d.kuhn, d.osthus, txt238\}}@bham.ac.uk}
\end{flushleft}

\

Yi Zhao
Department of Mathematics \& Statistics
Georgia State University
30 Pryor Street
Atlanta
GA 30303
USA

\begin{flushleft}
{\it{E-mail address}:
\tt{yzhao6}@gsu.edu}
\end{flushleft}
}

\end{document}